\tikzset{ext/.style={circle, draw,inner sep=1pt},int/.style={circle,draw,fill,inner sep=1pt},nil/.style={inner sep=1pt}}
\tikzset{exte/.style={circle, draw,inner sep=3pt},inte/.style={circle,draw,fill,inner sep=3pt}}
\tikzset{diagram/.style={matrix of math nodes, row sep=3em, column sep=2.5em, text height=1.5ex, text depth=0.25ex}}
\tikzset{diagram2/.style={matrix of math nodes, row sep=0.5em, column sep=0.5em, text height=1.5ex, text depth=0.25ex}}
\tikzset{dedge/.style={draw,-latex}}
\theoremstyle{plain}
  \newtheorem{thm}{Theorem}[section]
  \newtheorem{prop}[thm]{Proposition}
  \newtheorem{cor}[thm]{Corollary}
  \newtheorem{lemma}[thm]{Lemma}
\theoremstyle{definition}
  \newtheorem{ex}[thm]{Example}
  \newtheorem{rem}[thm]{Remark}
\newcommand{\alg}[1]{\mathfrak{{#1}}}
\newcommand{\Hom}{\mathop{Hom}}
\newcommand{\R}{{\mathbb{R}}}
\newcommand{\K}{{\mathbb{K}}}
\newcommand{\Q}{{\mathbb{Q}}}
\newcommand{\HGC}{{\mathrm{HGC}}}
\newcommand{\Graphs}{{\mathsf{Graphs}}}
\newcommand{\fHGC}{{\mathrm{fHGC}}}
\newcommand{\hoPoiss}{\mathsf{hoPois}}
\newcommand{\Gra}{{\mathsf{Gra}}}
\newcommand{\dGra}{{\mathsf{dGra}}}
\newcommand{\dGraor}{\mathsf{dGra}^{\rm or}}
\newcommand{\Graor}{\dGraor}
\newcommand{\vGraor}{\mathsf{vGra}^{or}}
\newcommand{\vGra}{\mathsf{vGra}}
\newcommand{\Tw}{\mathit{Tw}}
\newcommand{\Def}{\mathrm{Def}}
\newcommand{\Poiss}{\mathsf{Pois}}
\newcommand{\EBr}{\mathsf{EBr}}
\newcommand{\op}{\mathcal}
\newcommand{\Br}{\mathsf{Br}}
\newcommand{\Lie}{\mathsf{Lie}}
\newcommand{\ELie}{\mathsf{ELie}}
\newcommand{\hoLie}{\mathsf{hoLie}}
\newcommand{\Ass}{\mathsf{Assoc}}
\newcommand{\Com}{\mathsf{Com}}
\newcommand{\FM}{\mathsf{FM}}
\newcommand{\bpm}{\begin{pmatrix}}
\newcommand{\epm}{\end{pmatrix}}
\newcommand{\Tpoly}{T_{\rm poly}}
\newcommand{\Dpoly}{D_{\rm poly}}
\newcommand{\GC}{\mathrm{GC}}
\newcommand{\dGC}{\mathrm{dGC}}
\newcommand{\fGC}{\mathrm{fGC}}
\newcommand{\dfGC}{\mathrm{dfGC}}
\newcommand{\fdGC}{\dfGC}
\newcommand{\GCor}{\mathrm{GC}^{or}}
\renewcommand{\Hom}{\mathrm{Hom}}
\newcommand{\MC}{\mathsf{MC}}
\newcommand{\KGra}{\mathsf{KGra}}
\newcommand{\KGraphs}{\mathsf{KGraphs}}
\newcommand{\fGCor}{\fGC^{\rm or}}
\newcommand{\fvGCor}{\mathsf{fvGC}^{\rm or}}
\newcommand{\vGCor}{\mathsf{vGC}^{\rm or}}
\newcommand{\vGC}{\mathsf{vGC}}
\newcommand{\fvGC}{\mathsf{fvGC}}
\newcommand{\KGraor}{\KGra^{\rm or}}
\newcommand{\KGraphsor}{\KGraphs^{\rm or}}
\newcommand{\Graphsor}{\Graphs^{\rm or}}
\newcommand{\dGraphsor}{\Graphsor}
\newcommand{\mU}{\mathcal{U}}
\DeclareMathOperator{\End}{End}
\newcommand{\hoe}{\mathsf{hoe}}
\newcommand{\e}{\mathsf{e}}
\newcommand{\La}{\Lambda}
\newcommand{\Embbar}{\overline{\mathrm{Emb}}}
\newcommand{\hoCom}{\mathrm{hoCom}}
\newcommand{\dgra}{\mathrm{dgra}}
\newcommand{\kgra}{\mathrm{kgra}}
\newcommand{\Grap}{\mathsf{Grap}}
\newcommand{\Pois}{\Poiss}
\newcommand{\EGer}{\mathsf{EGer}}
\begin{document}
\title{Deformation quantization and the Gerstenhaber structure on the homology of knot spaces }

 \author{Thomas Willwacher}
\address{Institute of Mathematics \\ University of Zurich \\  
Winterthurerstrasse 190 \\
8057 Zurich, Switzerland}
\email{thomas.willwacher@math.uzh.ch}

\thanks{The author has been partially supported by the Swiss National Science foundation, grant 200021\_150012, and the SwissMAP NCCR funded by the Swiss National Science foundation.}


\begin{abstract}
It is well known that (in suitable codimension) the spaces of long knots in $\R^n$ modulo immersions are double loop spaces. Hence the homology carries a natural Gerstenhaber structure, given by the Gerstenhaber structure on the Hochschild homology of the $n$-Poisson operad. In this paper, we compute the latter Gerstenhaber structure in terms of hairy graphs, and show that it is not quite trivial combinatorially. Curiously, the construction makes essential use of methods and results of deformation quantization, and thus provides a bridge between two previously little related subjects in mathematics.
\end{abstract}

\maketitle


\section{Introduction}
We denote by $\Poiss_n$ the $n$-Poisson operad. An algebra over $\Poiss_n$ is a vector space $V$ with a commutative product of degree 0 and a compatible Lie bracket of degree $1-n$.
In particular, $\Poiss_n$ is endowed with a map from the associative operad
\[
 \Ass\to \Com \to \Poiss_n
\]
and is hence a multiplicative operad. Thus it makes sense to consider the Hochschild complex 
\[
 CH(\Poiss_n)
\]
and the Hochschild homology 
\[
 HH(\Poiss_n) = H(CH(\Poiss_n)).
\]
By the Deligne conjecture (now a Theorem), the complex $CH(\Poiss_n)$ is naturally endowed with a homotopy Gerstenhaber (i.e., homotopy $\Poiss_2$-)algebra structure, and hence $HH(\Poiss_n)$ is a Gerstenhaber algebra. The product is the cup product, and the Lie bracket the Gerstenhaber bracket.

The objects $HH(\Poiss_n)$ and $CH(\Poiss_n)$ and their Gerstenhaber structure has recently received some attention by algebraic topologists because of the following Theorem.

\begin{thm}[Songhafouo Tsopm\'en\'e \cite{ST}, Moriya \cite{Moriya}]
For $n\geq 4$ there is an isomorphism of Gerstenhaber algebras between the homology of the space of long knots modulo immersions and the Hochschild homology of $\Poiss_n$,
\[
 H_*(\Embbar(\R,\R^n), \Q) \cong HH(\Poiss_n).
\]
\end{thm}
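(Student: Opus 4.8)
The plan is to derive the isomorphism from the identification of the knot space with a derived mapping space of operads, together with the formality of the little disks operads. First I would recall, following Boavida de Brito--Weiss (and, in a related form, Dwyer--Hess), that in codimension $n-1\geq 3$, i.e.\ precisely for $n\geq 4$, there is a weak equivalence
\[
 \Embbar(\R,\R^n) \simeq \Omega^2 \Map^h(\mathsf{E}_1,\mathsf{E}_n),
\]
where $\Map^h$ denotes the derived mapping space of topological operads and $\mathsf{E}_k$ is the little $k$-disks operad. This exhibits the knot space as a two-fold loop space, and it is the origin of the Gerstenhaber structure on $H_*(\Embbar(\R,\R^n),\Q)$: the Pontryagin product together with the Browder bracket of the double loop space.

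Second, I would work rationally and invoke the formality of $\mathsf{E}_n$ over $\Q$ (Kontsevich, Lambrechts--Voli\'c), so that $H_*(\mathsf{E}_n,\Q)=\Poiss_n$ as operads and $H_*(\mathsf{E}_1,\Q)=\Ass$. The rational homotopy type of $\Map^h(\mathsf{E}_1,\mathsf{E}_n)$ is then governed by the derived operad maps $\Ass\to\Poiss_n$, i.e.\ by the deformation complex of the canonical multiplicative structure $\Ass\to\Com\to\Poiss_n$. This deformation complex is, up to the relevant shift, the operadic Hochschild complex $CH(\Poiss_n)$; passing to homology and undoing the two-fold loop space yields an isomorphism of graded vector spaces $H_*(\Embbar(\R,\R^n),\Q)\cong HH(\Poiss_n)$. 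This identification of the underlying graded spaces is in essence the rational homology computation of Arone--Turchin.

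The heart of the matter, and the step I expect to be the main obstacle, is to upgrade this to an isomorphism of Gerstenhaber algebras, i.e.\ to show that the two a priori distinct $\Poiss_2$-structures agree. On the topological side the structure comes from the $\mathsf{E}_2$-action implicit in the double delooping; on the algebraic side it is the $\Poiss_2$-structure on $CH(\Poiss_n)$ furnished by the Deligne conjecture. I would prove their agreement by arranging the chain of equivalences above to be $\mathsf{E}_2$-equivariant: the $\mathsf{E}_2$-action underlying the two-fold loop space structure of $\Map^h(\mathsf{E}_1,\mathsf{E}_n)$ is the ``self-composition'' action that, under the formality quasi-isomorphisms, becomes exactly the operadic Deligne-conjecture action on $CH(\Poiss_n)$. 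Concretely, one tracks the Browder bracket to the Gerstenhaber bracket and the Pontryagin product to the cup product through the formality maps, using that formality respects the operadic compositions that define these operations. Making this compatibility precise at the chain level, rather than only on homology, is the technical crux; granting it, the theorem follows.
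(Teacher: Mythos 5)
The paper itself does not prove this statement: it is quoted as a theorem of Songhafouo Tsopm\'en\'e \cite{ST} and Moriya \cite{Moriya}. Those references argue through Sinha's cosimplicial model: for $n\geq 4$ one has $\Embbar(\R,\R^n)\simeq \mathrm{Tot}\,\mathcal{K}_n^{\bullet}$ for the Kontsevich operad $\mathcal{K}_n$ viewed as a multiplicative operad, the McClure--Smith theorem endows this totalization with an $E_2$-action whose induced Gerstenhaber structure on the (collapsing) homology spectral sequence is exactly the Deligne-conjecture structure on the operadic Hochschild homology of $H_*(\mathcal{K}_n)=\Poiss_n$, and the essential input --- the actual content of both cited papers --- is the formality of $\mathcal{K}_n$ \emph{as a multiplicative operad}, i.e.\ as an operad under $\Ass$. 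Your route via the Dwyer--Hess double delooping is a legitimate alternative packaging of the same circle of ideas, but as written it has two genuine gaps.

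First, formality of $\mathsf{E}_n$ alone does not suffice. You need the formality zig-zag for $\mathsf{E}_n$ to be compatible with the arrow $\mathsf{E}_1\to\mathsf{E}_n$, i.e.\ relative (multiplicative) formality; this is not a formal consequence of the two separate formality statements, and it is precisely the theorem that \cite{ST} and \cite{Moriya} prove. Your sketch treats it as automatic. Second, ``passing to homology and undoing the two-fold loop space'' is not an operation one can perform: $H_*(\Omega^2 X,\Q)$ is not a degree shift of $H_*(X,\Q)$, so the identification of $H_*(\Omega^2\Map^h(\mathsf{E}_1,\mathsf{E}_n),\Q)$ with $HH(\Poiss_n)$ needs an actual model --- either the cosimplicial/Hochschild one above, or the later computation of the mapping spaces themselves. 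Finally, the $E_2$-compatibility you correctly flag as the crux is not something you can ``arrange''; in the cosimplicial approach it is supplied by McClure--Smith together with multiplicative formality, and in your delooping approach you would still have to identify the $E_2$-action on $\Omega^2\Map^h(\mathsf{E}_1,\mathsf{E}_n)$ with the braces action on $CH(\Poiss_n)$, which again funnels through the same multiplicative-formality statement you have omitted.
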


Currently, we do not have a complete understanding of the Hochschild homology $HH(\Poiss_n)$, even as a graded vector space. However, some amount of understanding can be obtained through the hairy graph complexes $\fHGC_{1,n}$. These graph complexes consists of (possibly infinite) $\Q$-linear combinations of graphs with one or more ``external legs'' or ``hairs'' like the following.
\begin{equation}\label{equ:HGCsample}
 \begin{tikzpicture}[scale=.5]
 \draw (0,0) circle (1);
 \draw (-180:1) node[int]{} -- +(-1.2,0);
 \end{tikzpicture}
,\quad
\begin{tikzpicture}[scale=.6]
\node[int] (v) at (0,0){};
\draw (v) -- +(90:1) (v) -- ++(210:1) (v) -- ++(-30:1);
\end{tikzpicture}
\,,\quad
\begin{tikzpicture}[scale=.5]
\node[int] (v1) at (-1,0){};\node[int] (v2) at (0,1){};\node[int] (v3) at (1,0){};\node[int] (v4) at (0,-1){};
\draw (v1)  edge (v2) edge (v4) -- +(-1.3,0) (v2) edge (v4) (v3) edge (v2) edge (v4) -- +(1.3,0);
\end{tikzpicture}
 \, ,\quad
 \begin{tikzpicture}[scale=.6]
\node[int] (v1) at (0,0){};\node[int] (v2) at (180:1){};\node[int] (v3) at (60:1){};\node[int] (v4) at (-60:1){};
\draw (v1) edge (v2) edge (v3) edge (v4) (v2)edge (v3) edge (v4)  -- +(180:1.3) (v3)edge (v4);
\end{tikzpicture}
 \, 
 \end{equation}
A more detailed definition will be given in section \ref{sec:hairydef} below.
The relation to $HH(\Poiss_n)$ is then as follows.
\begin{thm}[ Turchin \cite{Turchin1}]\label{thm:PoisHKR}
There is a zig-zag of quasi-isomorphism of complexes
\[
\fHGC_{1,n} \to \cdot \leftarrow CH(\Poiss_n).
\]
\end{thm}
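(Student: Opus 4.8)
The plan is to realize both complexes as operadic deformation complexes and to bridge them using the formality of the little $n$-disks operad, so that the middle vertex $\cdot$ of the zig-zag is a graphical deformation complex. I would begin with the standard identification of the Hochschild complex of a multiplicative operad with a deformation complex: since the multiplication on $\Poiss_n$ is exactly the datum of the map $\Ass\to\Com\to\Poiss_n$, the complex $CH(\Poiss_n)$, whose underlying space is $\prod_k \Poiss_n(k)$ equipped with the brace (Hochschild) differential built from the image of the associative generator, is quasi-isomorphic to the operadic deformation complex $\Def(\Ass\to\Poiss_n)$. This first reduction is essentially formal, and it converts the assertion into a comparison of deformation complexes, which is the form in which formality can be applied.

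Next I would import deformation quantization in the shape of Kontsevich--Tamarkin formality: there is a chain of quasi-isomorphisms of operads relating $\Poiss_n$ to Kontsevich's graph operad $\Graphs_n$, and it can be arranged compatibly with the map from $\Ass$ (equivalently, the chosen multiplication is carried to the standard one up to coherent homotopy). Consequently it induces a quasi-isomorphism relating $\Def(\Ass\to\Poiss_n)$ to $\Def(\Ass\to\Graphs_n)$. This latter complex, which may equally be described as $CH(\Graphs_n)$, is the vertex $\cdot$ of the asserted zig-zag, and the leg $CH(\Poiss_n)\to\cdot$ is precisely this formality-induced quasi-isomorphism.

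The remaining, and most combinatorial, step is to produce the other leg, a quasi-isomorphism $\fHGC_{1,n}\to\Def(\Ass\to\Graphs_n)$. An element of $\Graphs_n(k)$ is a graph on $k$ numbered external vertices together with internal vertices and edges; assembling such graphs over all arities $k$, with the univalent external vertices reinterpreted as the ``hairs'', yields exactly the generators of $\fHGC_{1,n}$ (the subscript $1$ recording that the source operad is $\Ass=\mathsf{e}_1$, and $n$ the ambient dimension). I would then match the two differentials term by term: the internal vertex-splitting and edge-contraction differential of the hairy graph complex against the part of the deformation differential coming from the internal differential of $\Graphs_n$, and the hair-attaching part against the part coming from composition with the associative generator. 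The bookkeeping of signs and of the degree conventions---in particular the shift by $1-n$ induced by the Poisson bracket degree---has to be carried out carefully here, but it is purely mechanical.

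I expect the genuine obstacle to lie in the compatibility asserted in the second step. Knowing that $\Poiss_n$ and $\Graphs_n$ are quasi-isomorphic as operads does not suffice: one must know that the comparison respects the chosen associative multiplication up to coherent homotopy, for otherwise it need not descend to the Hochschild, respectively deformation, complexes at all. This is exactly the point at which the methods and results of deformation quantization become indispensable, and establishing it---rather than the graphical identification of the last step---is the heart of the argument.
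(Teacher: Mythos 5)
Your overall architecture coincides with the paper's: the middle vertex is $CH(\Graphs_n)$, the right leg is induced by the operad quasi-isomorphism $\Poiss_n\to\Graphs_n$ of Kontsevich and Lambrechts--Voli\'c, and the left leg is the symmetrization map $\Phi\colon\fHGC_{1,n}\to CH(\Graphs_n)$ of Theorem \ref{thm:graphsHKR}. But you have located the difficulty in the wrong leg. The compatibility you single out as ``the heart of the argument'' is in fact free: $\Poiss_n\to\Graphs_n$ is a direct map of operads, defined on the two binary generators, and it factors the multiplication $\Ass\to\Com\to\Poiss_n\to\Graphs_n$ on the nose. There is no zig-zag of operads to rectify and no coherence issue; the induced map $CH(\Poiss_n)\to CH(\Graphs_n)$ is then a quasi-isomorphism by a standard filtration-by-arity argument. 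Deformation quantization plays no role at the level of complexes --- in this paper it only enters when the zig-zag is upgraded to one of (homotopy) Lie or Gerstenhaber algebras.

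The genuine gap is in the left leg. It is not true that the generators of $\fHGC_{1,n}$ match those of $CH(\Graphs_n)$: the hairy complex consists only of graphs all of whose external vertices are \emph{univalent} (the hairs), whereas $CH(\Graphs_n)=\prod_r\Graphs_n(r)[r]$ contains graphs with external vertices of arbitrary valence, so the symmetrized inclusion $\Phi$ is very far from an isomorphism of underlying graded vector spaces. Verifying that $\Phi$ is a chain map is indeed mechanical; the substantive claim is that this inclusion of a much smaller subspace induces an isomorphism on homology. That is precisely the content of Turchin's theorem (a Hochschild--Kostant--Rosenberg/Hodge-decomposition type statement for the Hochschild complex of $\Graphs_n$), and it requires a real argument --- e.g.\ a splitting of the Hochschild complex and an analysis of the non-hairy pieces --- not sign bookkeeping. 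As written, your proposal asserts the key step rather than proving it, while spending its effort on a step that holds by construction.
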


The purpose of the present paper is to upgrade the above result to a quasi-isomorphism of homotopy Gerstenhaber algebras, for a homotopy Gerstenhaber algebra structure on $\fHGC_{1,n}$ which we define. In particular, we provide a description of the Gerstenhaber structure on the homology $H(\fHGC_{1,n})\cong HH(\Poiss_n)$.
Surprisingly, the answer is not quite so trivial and deeply touches upon the subject of deformation quantization.
In fact, the solution requires us to first solve some open (but fortunately not very hard) problems in infinite dimensional deformation quantization. 

Let us give a quick overview. 
The main objective in deformation quantization is the study of formal local deformations of the associative algebra structure on the smooth functions $C^\infty(M)$ on a manifold $M$. Such deformations are governed by the multidifferential Hochschild complex (dg Lie algebra) $\Dpoly(M)\subset CH(C^\infty(M))$, consisting of maps $C^\infty(M)^{\otimes \bullet}\to C^\infty(M)$ which are differential operators in each slot. Maxim Kontsevich's Formality Theorem ``solves'' the deformation quantization problem by describing the homotopy type of the dg Lie algebra $\Dpoly(M)[1]$.

\begin{thm}[Kontsevich Formality Theorem {\cite{K1}} ]\label{thm:Kformality}
 The dg Lie algebra of multidifferential operators $\Dpoly(M)[1]$ is formal, i.e., quasi-isomorphic to its homology, the dg Lie algebra of multivector fields $\Tpoly(M)[1]=C^\infty(M;\La TM)[1]$, equipped with the Schouten bracket.
\end{thm}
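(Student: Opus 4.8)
The plan is to follow Kontsevich's original strategy and produce an \emph{explicit} $L_\infty$-quasi-isomorphism from $\Tpoly(M)[1]$ to $\Dpoly(M)[1]$, rather than merely establishing abstract formality. First I would reduce to the local model $M=\R^d$. Since both dg Lie algebras are built from the tangent bundle by natural, coordinate-equivariant constructions, a formality morphism on $\R^d$ that is suitably compatible with the action of formal diffeomorphisms can be transported to an arbitrary $M$ by a Fedosov-type resolution, i.e.\ a Gelfand--Kazhdan formal-geometry descent along the bundle of formal coordinate systems. I would isolate this globalization step and return to it only once the local statement is secured.

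For $M=\R^d$ the goal is to write down an $L_\infty$-morphism
\[
 \mU\colon \Tpoly(\R^d)[1]\lo \Dpoly(\R^d)[1],
\]
whose Taylor coefficients $\mU_n$ send $n$ polyvector fields to a single multidifferential operator. Each $\mU_n$ is a finite sum over a class of admissible graphs $\Gamma$ (with $n$ aerial vertices, finitely many ground vertices, and directed edges decorated by partial derivatives), weighted by real coefficients $w_\Gamma$. These coefficients are the analytic heart of the construction: $w_\Gamma=\int_{\widebar{C}_{n,m}}\bigwedge_{e}d\phi_e$, where $C_{n,m}$ is the configuration space of $n$ points in the upper half plane and $m$ points on $\R$ modulo the $z\mapsto az+b$ (with $a>0$) action, $\widebar{C}_{n,m}$ is its Fulton--MacPherson/Axelrod--Singer compactification, and $\phi_e$ is Kontsevich's harmonic-angle propagator attached to each edge. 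I would first verify that the propagator forms extend smoothly to $\widebar{C}_{n,m}$ so that the integrals converge.

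The central step is to verify the $L_\infty$-relations. Applying Stokes' theorem to $\int_{\widebar{C}_{n,m}}d\bigl(\bigwedge_e d\phi_e\bigr)=0$ rewrites each relation as a sum of integrals over the codimension-one boundary strata, which correspond to collections of points colliding. I would then match these boundary contributions term by term with the $L_\infty$-morphism equation: strata in which exactly two aerial points collide reproduce the Schouten bracket on the source together with the Gerstenhaber bracket on the target, while strata in which points approach the real axis produce the Hochschild differential and the cup product on $\Dpoly$. The genuine work is the package of \emph{vanishing lemmas} showing that every remaining stratum contributes zero — in particular that configurations with three or more colliding aerial points, or with certain isolated degenerations, yield integrals that vanish for dimension or symmetry reasons. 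This boundary analysis is the main obstacle and the most delicate part of the argument; I expect the symmetry-based vanishing (Kontsevich's Lemmas) to require the most care.

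Finally I would identify the linear Taylor coefficient $\mU_1$ with the Hochschild--Kostant--Rosenberg map, sending a $k$-polyvector field to its antisymmetrized multidifferential operator; this is the classical quasi-isomorphism identifying $\Tpoly$ with the Hochschild cohomology $H(\Dpoly)$. Since $\mU$ is an $L_\infty$-morphism whose linear part $\mU_1$ is a quasi-isomorphism of complexes, the standard $L_\infty$ obstruction theory guarantees that $\mU$ is an $L_\infty$-quasi-isomorphism, so $\Dpoly(\R^d)[1]$ is formal. Re-invoking the globalization argument upgrades this to arbitrary $M$, completing the proof. (An alternative, less explicit route would replace the configuration-space construction by Tamarkin's operadic argument using the formality of the little disks operad together with a Drinfeld associator, but I would favor the explicit morphism here since its combinatorics connect directly to the graph complexes used later in the paper.)
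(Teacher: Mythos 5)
Your proposal follows exactly the strategy the paper recalls (and that Kontsevich's cited proof uses): the explicit sum-over-graphs $L_\infty$-morphism weighted by integrals of products of the hyperbolic-angle propagator over compactified configuration spaces of points in the upper half plane, with the $L_\infty$-relations verified via Stokes' theorem and boundary-stratum vanishing lemmas, the linear part being HKR, and globalization by formal geometry. This matches the construction summarized in the paper's Section on deformation quantization, so your approach is essentially the same as the one the paper relies on.
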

To simplify matters, we will from now on set $M=\R^d$, and, if no confusion regarding $d$ arises, abbreviate $\Dpoly=\Dpoly(\R^d)$. 
Two variants of the formality problem can be considered. First, one may consider the sub-dg Lie algebra $\Dpoly^{\geq 1}\subset \Dpoly$ consisting of differential operators with at least one ``slot''. (In other words $\Dpoly=\Dpoly^{\geq 1}\oplus C^{\infty}(\R^d)$.) Clearly $H^{\geq 1}(\Dpoly)\cong \Tpoly^{\geq 1}\subset \Tpoly$ can be identified with the multivector fields of degree at least one as a Lie algebra. Hence it is sensible to ask whether $\Dpoly^{\geq 1}$ is still formal.

Secondly, for applications in quantum field theory one is interested in the case $d= \infty$, and one may ask whether Theorem \ref{thm:Kformality} remains true in this case.

Surprisingly, the answer to both questions is no, as has been shown in \cite[section 6]{DTT} for the first question and \cite{Shoikhet}\footnote{More precisely, in \cite{Shoikhet} it is shown that there is no formality morphism given by ``universal formulas''. It is still an open problem of determining the formality of $\Dpoly(\R^\infty)$ in the strict sense. However, given that one is most interested in ``sensible'' formulas, we still consider the result of \cite{Shoikhet} as a negative answer to our question.} for the second.
Even more surpisingly, the answers are related as we shall now describe.

We will follow the obstruction theoretic viewpoint. Generally, any dg Lie algebra $\alg g$ is quasi-isomorphic to $H(\alg g)$ endowed with \emph{some} $L_\infty$ structure via homotopy transfer. One approach to studying formality questions is hence studying the space of $L_\infty$ structures on $H(\alg g)$. Such structures may be defined as the Maurer-Cartan elements in the Chevalley complex $C(H(\alg g),H(\alg g))$, considered as a dg Lie algebra. 
Consider next the case $H(\alg g)=\Tpoly[1]$. We will cheat a bit and consider not the dg Lie algebra $C(\Tpoly[1],\Tpoly[1])$, but ``universal maps'' that can be build just by taking derivatives and contracting indices. 
More concretely, pick cordinates $\{x_j\}$ on $\R^d$, so that one may identify 
\[
 \Tpoly \cong C^{\infty}(\R^d)[p_1, \dots, p_d]
\]
where $p_j=\frac{\partial}{\partial x_j}$ is a variable of degree 1. Then general degree $r$ multivector fields have the form $\gamma = \gamma^{i_1\dots i_r}(x_1,\dots, x_r) p_{i_1}\cdots p_{i_r}$.
We want to build multilinear maps of those objects by taking derivatives and contracting indices of tensors.
Such derivative and contraction patterns are easily seen to be encoded in a directed graph, like the following:
\[
\begin{tikzpicture}
 \node[int] (v1) at (0:.5){};
 \node[int] (v2) at (120:.5){};
 \node[int] (v3) at (240:.5){};
 \draw[-latex] (v1) edge (v2) edge[bend left] (v3) (v2) edge (v3) (v3) edge[bend left] (v1);
\end{tikzpicture}
\]
More concretely, to such a graph with $k$ vertices one can associate a map 
\begin{gather}\label{equ:graphaction}
 S^k(\Tpoly[2]) \to \Tpoly[2] \\
 (\gamma_1,\dots, \gamma_k) \mapsto \mu \sum_{\sigma\in S_k} \pm \prod_{(i,j)\text{ edge}}\sum_{\alpha=1}^d \left(\frac{\partial}{\partial x_\alpha}\right)_{\sigma(i)}
 \left(\frac{\partial}{\partial p_\alpha}\right)_{\sigma(j)}  (\gamma_1,\dots, \gamma_k),
\end{gather}
where the derivative operations act on the $\sigma(i)$-th (resp. $\sigma(j)$-th) factors, and $\mu$ multiplies all factors in the tensor product.

One can build a graded vector space $\dGC_2$ of formal linear combinations of directed graphs like the above (the precise definition will be given below). The formula \eqref{equ:graphaction} then provides us with a map of this graph complex into the Chevalley complex of the Lie algebra of multivector fields
\[
 \dGC_2 \to C(\Tpoly[1],\Tpoly[1]).
\]
Furthermore, there is a natural dg Lie algebra structure on $\dGC_2$ such that the above map is a morphism of dg Lie algebras. The dg algebra $\dGC_2$ is our formal avatar of the Chevalley complex $C(\Tpoly[1],\Tpoly[1])$, and we are interested in the Maurer-Cartan elements of $\dGC_2$, which yield (via the map \eqref{equ:graphaction}) $L_\infty$ structures on $\Tpoly[1]$ given ``by universal formulas'', i.e., independent of the dimension $d$.\footnote{The above replacement of the Chevalley complex by the graph complex $\dGC_2$ is ad hoc. A better justification is discussed in \cite{WillStable}.}

A similar construction works for $\Tpoly^{\geq 1}$ and in the case $d=\infty$. In the case of $\Tpoly^{\geq 1}$ one however has to require that the graphs have a sink,\footnote{I.e., at least one vertex with only incoming arrows.} so that no polyvector fields of degree 0 (i.e., functions) can be produced in \eqref{equ:graphaction}. In the case $d=\infty$ one similarly restricts to graphs that have no directed cycles to avoid potentially infinite summations. Of course, every finite directed acyclic graph has a sink, so that we find the following hierarchy of graphical dg Lie algebras
\[
 \GCor_2 \subset \dGC^{sink}_2\subset \dGC_2,
\]
with $\GCor_2$ spanned by directed acyclic graphs and $\dGC^{sink}_2$ by graphs with a sink.
The Maurer-Cartan elements in $\GCor_2$ have been completely classified. There is, up to gauge, only a one parameter family of such elements.
\begin{thm}[{\cite{WillInfty}}]\label{thm:oriented MC}
 There is a one parameter family of Maurer-Cartan elements $m_\lambda$ in $\GCor_2$, and a gauge-invariant function 
 \[
  F: \MC(\GCor_2)\to \K
 \]
such that any Maurer-Cartan element $m$ is gauge equivalent to $m_{F(m)}$.
\end{thm}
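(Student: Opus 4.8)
The plan is to classify the Maurer--Cartan elements order by order along the filtration of $\GCor_2$ by loop order (first Betti number) $b=e-v+1$. Since vertex splitting (the differential) preserves $b$ while vertex insertion (the bracket) is additive in it, $\GCor_2$ is a dg Lie algebra graded over $\Z_{\geq 0}$ by loop order, and the Maurer--Cartan equation splits into the triangular system $d m_b+\tfrac12\sum_{b_1+b_2=b}[m_{b_1},m_{b_2}]=0$. The asserted family is $m_\lambda=\lambda L$, where $L$ is the graph with two vertices and one directed edge; it sits in loop order $0$. I would first verify that $L$ is a cocycle and that $[L,L]=0$---the latter being the graphical avatar of the strict Jacobi identity for the Schouten bracket---so that $\lambda L$ is genuinely Maurer--Cartan for all $\lambda$. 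The invariant $F(m)$ is then the coefficient of $L$ in $m$; it is gauge invariant because, in the admissible oriented graph complex, neither $d\xi$ nor $[\xi,m]$ can contribute to the two-vertex single-edge graph $L$, so a gauge transformation $m\mapsto m+d\xi+[\xi,m]+\cdots$ leaves this coefficient fixed.

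The whole classification reduces to one cohomological statement: for every $\lambda$, the cohomology of the twisted complex $\big(\GCor_2,\,d+\lambda[L,-]\big)$ in the Maurer--Cartan degree is one-dimensional, concentrated in loop order $0$ and spanned by the class of $L$. Equivalently, in loop orders $b\geq 1$ this twisted cohomology vanishes in the relevant degree, while at loop order $0$ the tree-level Maurer--Cartan elements are rigid up to the single scaling parameter. For $\lambda\neq 0$ one rescales to $\lambda=1$, where $d+[L,-]$ is the full oriented graph complex differential, and invokes the known description of $H(\GCor_2)$ (in terms of $H(\GC_1)$ together with the loop classes) to check that nothing survives in the Maurer--Cartan degree in positive loop order; the degenerate untwisted case $\lambda=0$ is treated by the analogous vanishing for $(\GCor_2,d)$.

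Granting this, the exhaustiveness $m\sim m_{F(m)}$ is proved by induction on loop order. Set $\lambda=F(m)$. At loop order $0$, rigidity lets me gauge $m_0$ to $\lambda L$. Assuming $m$ has been gauged so that $m_0=\lambda L$ and $m_1=\dots=m_{b-1}=0$, the loop-order-$b$ component of the Maurer--Cartan equation collapses to $(d+\lambda[L,-])\,m_b=0$; the vanishing of the twisted cohomology in loop order $b\geq 1$ makes $m_b$ exact, so a further gauge transformation removes it. Because the loop-order filtration is complete and each correction is determined at its own order, the infinite composite of these gauge transformations converges to a gauge equivalence $m\sim \lambda L=m_{F(m)}$.

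The hard part will be the cohomology vanishing of the second paragraph: one must pin down exactly how the loop classes enter $H(\GCor_2)$ and confirm that they contribute nothing in the Maurer--Cartan degree and positive loop order, in both the twisted ($\lambda\neq0$) and untwisted ($\lambda=0$) complexes. A secondary but necessary point is the well-definedness of $F$ and the convergence of the composite gauge on the possibly infinite linear combinations allowed in $\GCor_2$; both are consequences of completeness of the loop-order (and vertex-number) filtration, but should be verified rather than assumed.
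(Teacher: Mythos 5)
Your overall scaffolding---filter $\GCor_2$ by loop order, gauge away the components of an arbitrary Maurer--Cartan element order by order using a cohomology computation in the Maurer--Cartan degree, and extract the one surviving parameter as the coefficient of a one-dimensional cohomology class---is exactly the right shape of argument, and it is the mechanism the paper alludes to (the theorem itself is imported from \cite{WillInfty}; the paper only sketches why it holds in the obstruction-theoretic recollection). However, you have misidentified the class that generates the family, and this breaks the proof. By Theorem \ref{thm:GCorGC}, $H^1(\GCor_2)\cong H^1(\GC_1)$ is one-dimensional and spanned by the theta graph, whose representative in $\GCor_2$ is the loop-order-two, four-vertex, five-edge combination \eqref{equ:GCorclass}; the paper states explicitly that ``the one parameter is the coefficient of the graphs \eqref{equ:GCorclass}.'' It is emphatically not concentrated in loop order $0$: the single-edge graph $L$ is (up to normalization) the element $\mu$ whose bracket defines the differential of the deformation complex, so $\lambda L$ only rescales the standard bracket, your twisted differential $d+\lambda[L,-]$ is just a multiple of $d$, and your ``key cohomological statement'' (that $H^1$ of this complex is concentrated in loop order $0$ and spanned by $L$) is false.

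The concrete failure of your $F$ is visible in the paper itself: the Shoikhet element $m_{trans}$ begins at loop order $2$ with the class \eqref{equ:GCorclass} and has no $L$-component, so your $F(m_{trans})=0$ would force $m_{trans}$ to be gauge-trivial. That contradicts the central claim $F(m_{trans})\neq 0$ and the resulting non-formality of $\Dpoly^{\geq 1}$ (and, downstream, the nontrivial bracket computations of Section \ref{sec:lie bracket}, e.g.\ $[L,L]\neq 0$ for the Shoikhet structure). In other words, the coefficient of $L$ may well be gauge-invariant, but it does not separate gauge orbits, which is what the theorem requires of $F$. To repair the argument, keep your induction on loop order but base it on the correct statement: $H^1(\GCor_2)$ is one-dimensional and concentrated in loop order $2$, so the loop-order-$0$ and $-1$ components can always be gauged away, the loop-order-$2$ component contributes exactly one scalar $\lambda$ (the pairing with the dual of the theta class, which is the correct definition of $F$), and all higher components are removable once one checks the relevant (twisted) cohomology vanishes in the Maurer--Cartan degree in loop orders $>2$. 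Your remarks on completeness of the filtration and convergence of the composite gauge transformation remain valid and necessary.
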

We claim that a similar statement holds for the Maurer-Cartan elements in $\dGC^{sink}_2$ because the inclusion $\GCor_2\to \dGC_2^{sink}$ is a quasi-isomorphism, but we will not show this statement in the current paper.

Now, to cut the story a bit short, the Maurer-Cartan element $m_{trans}\in\GCor_2$ corresponding to the homotopy-transferred $L_\infty$ structure on $\Tpoly$ (or $\Tpoly^{\geq 1}$) can be seen to satisfy $F(m_{trans})\neq 0$ and hence to \emph{not} be trivial, i.e., not correspond to the standard Lie algebra structure on $\Tpoly$.
This is the formal version of non-formality of $\Dpoly^{\geq 1}$, and at the same time the obstruction to existence of infinite dimensional deformation quantization.
We will call the Maurer-Cartan element $m_{trans}$ the \emph{Shoikhet} Maurer-Cartan element, because it was first explicitly constructed by Shoikhet in \cite{Shoikhet}. Similarly, we will call the non-standard $L_\infty$ structure on $\Tpoly^{\geq 1}[1]$ defined by $m_{trans}$ the \emph{Shoikhet} $L_\infty$ structure.

\medskip

Next let us describe the link to the Hochschild homology of $\Poiss_n$. We note that there is a close formal analogy between the main players in deformation quantization, and those for $HH(\Poiss_n)$, as given in the following table.
\bigskip
\begin{center}
 \begin{tabular}{c|c}
  Deformation Quant. & Long knots \\
  \hline
  $\Dpoly^{\geq 1}$ & $CH(\Poiss_n)$ \\
  $\Tpoly^{\geq 1}$ & hairy graph complex $\fHGC_{1,n}$ \\
  Hochschild-Kostant-Rosenberg Theorem & Theorem  \ref{thm:PoisHKR}
 \end{tabular}
\end{center}
\bigskip

Mathematically, we will materialize this formal analogy in this paper by showing that there is a map of dg Lie algebras
\[
 \GCor_2 \to C(\fHGC_{1,n}, \fHGC_{1,n})
\]
by a graphical analog of formula \eqref{equ:graphaction}.
In particular, non-zero Maurer-Cartan elements of $\GCor_2$ yield non-standard $L_\infty$ structures on $\fHGC_{1,n}$, and we have the following result.

\begin{thm}\label{thm:main}
The $L_\infty$ algebra $\fHGC_{1,n}$ with $L_\infty$ structure induced by the Shoikhet Maurer-Cartan element $m_{trans}$ is quasi-isomorphic to $CH(\Poiss_n)$.
\end{thm}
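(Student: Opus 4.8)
The plan is to exhibit a zig-zag of $L_\infty$ quasi-isomorphisms connecting $\fHGC_{1,n}$, equipped with its $m_{trans}$-deformed $L_\infty$ structure, to $CH(\Poiss_n)$, refining the quasi-isomorphism of complexes of Theorem~\ref{thm:PoisHKR}. The key point is that Turchin's zig-zag, a priori only compatible with the \emph{differentials}, must be promoted to one compatible with the full $L_\infty$ (equivalently, the homotopy Gerstenhaber) structure. So the first step is to make precise the graphical analog of formula \eqref{equ:graphaction}, producing the dg Lie map
\[
 \GCor_2 \to C(\fHGC_{1,n}, \fHGC_{1,n}),
\]
and to verify that it really is a morphism of dg Lie algebras: one checks that the bracket induced by ``insertion of hairy graphs into vertices'' matches the bracket on $\GCor_2$, and that the differentials are intertwined. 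This is a direct computation with signs, entirely analogous to the classical statement that $\dGC_2 \to C(\Tpoly[1],\Tpoly[1])$ is a dg Lie morphism, so I would treat it as routine once the conventions are fixed. Feeding the Shoikhet element $m_{trans}\in\MC(\GCor_2)$ through this map then endows $\fHGC_{1,n}$ with the desired $L_\infty$ structure.

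The second, and central, step is to identify this $L_\infty$ structure on $\fHGC_{1,n}$ with the one transported from $CH(\Poiss_n)$ through Turchin's zig-zag. I would organize both sides through the operadic model $\Graphs_n$ of the little $n$-disks operad and its hairy/deformation complex. The target $CH(\Poiss_n)$ carries its homotopy Gerstenhaber structure via the (proven) Deligne conjecture, i.e.\ via an action of a chain model of the little $2$-disks operad. On the graphical side, the whole point of introducing $\GCor_2$ is that it acts on $\fHGC_{1,n}$ by universal formulas, and this action is precisely the shadow of the $\Poiss_2$ (Gerstenhaber) action one expects from Deligne. Concretely, I would show that under the identification $H(\fHGC_{1,n})\cong HH(\Poiss_n)$ the two homotopy Gerstenhaber structures are induced by the \emph{same} Maurer-Cartan element, namely $m_{trans}$: the Hochschild differential and the cup/bracket operations on $CH(\Poiss_n)$, translated into graphs, reproduce exactly the operations that $m_{trans}$ encodes. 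This is where the detour through deformation quantization is essential — the nontriviality $F(m_{trans})\neq 0$ from Theorem~\ref{thm:oriented MC} is what forces the graph-level structure to be the Shoikhet one rather than the naive (untwisted) one.

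The main obstacle I anticipate is rigidity at the level of $L_\infty$ \emph{structures} rather than just homology. A priori, Turchin's zig-zag could transport the Gerstenhaber structure of $CH(\Poiss_n)$ to \emph{some} $L_\infty$ structure on $\fHGC_{1,n}$ that agrees with $m_{trans}$ only up to a possibly nontrivial gauge and up to higher homotopies; pinning it down to $m_{trans}$ on the nose requires a classification/uniqueness input. Here I would invoke Theorem~\ref{thm:oriented MC}: since Maurer-Cartan elements of $\GCor_2$ are classified up to gauge by the single invariant $F$, it suffices to compute $F$ of the transported element and match it with $F(m_{trans})$. Thus the hard part reduces to a single invariant computation — verifying that the class produced by Turchin's zig-zag lands in the correct (nonzero) gauge orbit — together with the bookkeeping needed to upgrade an equivalence of Maurer-Cartan elements in $\GCor_2$ to an actual $L_\infty$ quasi-isomorphism between $\fHGC_{1,n}$ and $CH(\Poiss_n)$. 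Once that orbit is identified, functoriality of the $\GCor_2$-action and the fact that $\GCor_2\to C(\fHGC_{1,n},\fHGC_{1,n})$ is a dg Lie map let one assemble the zig-zag of $L_\infty$ maps, completing the proof.
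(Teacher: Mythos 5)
Your first step---the dg Lie map $\GCor_2 \to C(\fHGC_{1,n}, \fHGC_{1,n})$ by hair reconnection, followed by twisting with $m_{trans}$---does match the paper (Propositions \ref{prop:maction} and \ref{prop:graphs act on graphs}), though ``routine once the conventions are fixed'' undersells it: the reconnection action of $\vGraor_2$ is only compatible with the piece $\delta_1$ of the differential that expands internal vertices, and to recover the full differential one must twist by the one-vertex, one-hair graph. That requires checking that this element is Maurer--Cartan \emph{for the Shoikhet $\hoLie_2$-structure} and that the twist reproduces $\delta$; the paper needs a separate combinatorial lemma for this, resting on the normalization of Remark \ref{rem:1in1outMC}.

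The genuine gap is in your central step. You propose to transport the Gerstenhaber structure of $CH(\Poiss_n)$ along Turchin's zig-zag and then pin the result down using the classification of Theorem \ref{thm:oriented MC} by ``computing $F$ of the transported element.'' Two things block this. First, Theorem \ref{thm:oriented MC} classifies Maurer--Cartan elements of $\GCor_2$, i.e.\ $L_\infty$-structures given by universal graphical formulas; a structure obtained by homotopy transfer along Turchin's zig-zag (which is only a zig-zag of \emph{complexes}, cf.\ Theorem \ref{thm:graphsHKR}) is a priori an arbitrary element of $C(\fHGC_{1,n},\fHGC_{1,n})$ and need not lie in the image of $\GCor_2$, so $F$ is not even defined on it; making the transfer graphical is itself the hard content. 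Second, even granting graphicality, you give no mechanism for evaluating $F$ on the transported element---that computation is essentially equivalent to the theorem. The paper avoids both problems by never transporting along the zig-zag: it shows that the two-colored operad built from $\Br$, $\Tw\KGraor$ and $\Tw\vGraor_2$ acts on the pair $CH(\Graphs_n)\oplus\fHGC_{1,n}$ (Proposition \ref{prop:graphs act on graphs}), and then simply evaluates a twistable stable oriented formality morphism out of $\ELie$ into that colored operad (Theorem \ref{thm:stable formality existence}, i.e.\ Shoikhet's construction) on this representation. This yields a direct, explicit $L_\infty$-morphism from $(\fHGC_{1,n},m_{trans})$ to $CH(\Graphs_n)\simeq CH(\Poiss_n)$ whose linear part is the HKR element \eqref{equ:HKRelementor}, hence a quasi-isomorphism by Theorem \ref{thm:graphsHKR}; the identification with $m_{trans}$ is automatic because the same colored-operad map, evaluated on $\Dpoly^{\geq 1}\oplus\Tpoly^{\geq 1}$, is what defines $m_{trans}$ in the first place, so no invariant ever has to be computed.
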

We will call this $L_\infty$ algebra structure on $\fHGC_{1,n}$ the Shoikhet $L_\infty$ structure. We note that in particular, the description above gives us a quite explicit description of the induced Lie bracket on $H(\fHGC_{1,n})$, which will be examined below.

The second main result of this paper in some sense will be the extension of the above statements to the homotopy Gerstenhaber (or, equivalently, homotopy braces) setting.

\begin{thm}\label{thm:brstructure}
The above $L_\infty$ algebra structure on $\fHGC_{1,n}$ may be extended to a homotopy braces (and homotoy Gerstenhaber) structure on $\fHGC_{1,n}$ given by hair reconnection operations, such that $\fHGC_{1,n}$ with that structure is quasi-isomorphic to
$CH(\Poiss_n)$ as homotopy braces algebra. 
\end{thm}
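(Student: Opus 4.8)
The plan is to upgrade Theorem \ref{thm:main} from an $L_\infty$ (i.e.\ homotopy Lie) statement to a full homotopy braces statement by enlarging the source graph complex. Recall that the $L_\infty$ structure on $\fHGC_{1,n}$ was produced by transporting the Shoikhet Maurer-Cartan element $m_{trans}\in\GCor_2$ along the map of dg Lie algebras $\GCor_2\to C(\fHGC_{1,n},\fHGC_{1,n})$. A homotopy braces algebra is an algebra over the operad $\Br$ (equivalently, by the Deligne conjecture machinery, over a cofibrant model of $\Poiss_2$), and the natural ``universal'' receptacle for such structures is not the deformation complex of the Lie bracket alone but the \emph{brace} deformation complex. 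So the first step is to replace $C(\fHGC_{1,n},\fHGC_{1,n})$ by the convolution/deformation complex $\Def(\hBr\to \End_{\fHGC_{1,n}})$ governing homotopy braces structures, whose Maurer-Cartan elements are precisely homotopy braces structures on $\fHGC_{1,n}$ extending a given $L_\infty$ structure. The key point is that the forgetful map from this brace deformation complex to the Lie (Chevalley) deformation complex is compatible with a corresponding forgetful map on the source side.

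Concretely, the second step is to identify the correct graphical source. On the deformation-quantization side, homotopy \emph{braces} structures (as opposed to mere $L_\infty$ structures) on $\Tpoly$ are encoded not by $\GCor_2$ but by a larger, two-colored or operadic graph complex that remembers the brace (insertion) operations, not just the antisymmetrized bracket. I would introduce such a graphical dg Lie (or rather $\hBr$-)algebra, call it $\GCor_2$-with-braces, together with a map to $\Def(\hBr\to\End_{\fHGC_{1,n}})$ given by the same contraction-of-indices recipe as \eqref{equ:graphaction}, now interpreted as hair-reconnection operations: an edge of the source graph instructs one to take a derivative at one vertex and contract against a $p$-index at another, which on the hairy side is exactly the operation of reconnecting a hair of one hairy graph to a vertex of another. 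The Shoikhet element $m_{trans}$, being the honest homotopy-transferred structure coming from the Deligne-conjecture $\Br$-algebra structure on $CH(\Poiss_n)$, lifts canonically to a Maurer-Cartan element in this braces graph complex; its image under the graphical map is the desired homotopy braces structure on $\fHGC_{1,n}$, and Theorem \ref{thm:main} already identifies the underlying $L_\infty$ algebra with $CH(\Poiss_n)$.

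The third step is to promote the quasi-isomorphism of Theorem \ref{thm:main} to one of homotopy braces algebras. Here I would use the same zig-zag of quasi-isomorphisms that underlies Theorems \ref{thm:PoisHKR} and \ref{thm:main}, but track the brace operations through it. The cleanest route is operadic: realize both $\fHGC_{1,n}$ (with the reconnection braces) and $CH(\Poiss_n)$ (with the Deligne-conjecture braces) as Maurer-Cartan elements, i.e.\ as $\hBr$-algebra structures obtained by twisting a fixed $\hBr$-algebra by compatible MC elements, and show that the quasi-isomorphism of Theorem \ref{thm:main} intertwines these MC elements up to the coherent homotopy recorded in the brace complex. Since a map of dg Lie algebras sends gauge-equivalent MC elements to gauge-equivalent ones, and since $\hBr$-morphisms out of a cofibrant model can be lifted whenever the underlying $L_\infty$-morphism is fixed, one obtains an $\infty$-morphism of $\hBr$-algebras whose linear part is the quasi-isomorphism of Theorem \ref{thm:main}, hence itself a quasi-isomorphism.

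The main obstacle I expect is the second step: constructing the braces graph complex and verifying that the reconnection operations on $\fHGC_{1,n}$ genuinely assemble into a homotopy braces (and not merely homotopy Lie) action, i.e.\ that the graphical formulas satisfy the full tower of $\hBr$-relations rather than just the $L_\infty$ (Jacobi) relations. The difficulty is combinatorial bookkeeping of signs and of which hair attaches where under the brace insertions, together with showing that the map from the braces graph complex into $\Def(\hBr\to\End_{\fHGC_{1,n}})$ is a morphism of $\hBr$-deformation complexes compatible with the forgetful maps to the Lie side. Once this compatibility is in place, everything else reduces to transporting already-established $L_\infty$ statements (Theorem \ref{thm:main}) along functorial operadic constructions, which is essentially formal.
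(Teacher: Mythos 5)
Your overall strategy --- encode the braces structure graphically, act on $\fHGC_{1,n}$ by hair reconnection, and feed in the Shoikhet datum --- is the right one and matches the paper's proof in outline. However, there are two genuine gaps. First, the object you leave unspecified in your second step is exactly where the content lies: the paper does not construct a new ``braces graph complex'' ad hoc, but uses the two-colored operad $\bpm \Br & \Tw\KGraor & \Tw\vGraor_2 \epm$, in which the operadic $\Br$--$\vGraor_2$-bimodule $\KGraor$ of acyclic Kontsevich graphs carries the $\infty$-morphism data. The theorem is then obtained by composing a \emph{twistable stable oriented braces formality morphism} $\EBr \to \bpm \Br & \Tw\KGraor & \Tw\vGraor_2 \epm$ (where $\EBr$ governs two $\Br_\infty$-algebras and an $\infty$-morphism between them) with an action of the target on $CH(\Graphs_n)\oplus\fHGC_{1,n}$ given by hair reconnection (Proposition \ref{prop:graphs act on graphs}); note that one must first replace $CH(\Poiss_n)$ by $CH(\Graphs_n)$ so that Kontsevich graphs can act by insertion.

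Second, and more seriously, your third step asserts that the braces-level morphism ``can be lifted whenever the underlying $L_\infty$-morphism is fixed'' and that $m_{trans}$ ``lifts canonically.'' This is not a formal lifting statement: the existence of an oriented stable braces (or $\hoe_2$) formality morphism is a substantive input (Theorem \ref{thm:stable formality existence}), obtained over $\R$ by rerunning the configuration-space-integral construction of \cite{WillInfty} with the Shoikhet propagator $d\phi^{trunc}$, and over $\Q$ by a graphical homotopy-transfer argument combined with the uniqueness statement of Corollary \ref{cor:Lieunique}; obstructions to such lifts do not vanish for free. A further technical point your proposal skips over: the hair-reconnection action is first defined only on $(\fHGC_{1,n},\delta_1)$ carrying part of the differential, and the full differential is recovered by operadic twisting with the one-hair Maurer--Cartan element $\mu$. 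One must then verify --- as the paper does in two lemmas, using that the Shoikhet element can be chosen without $1$-in-$1$-out vertices --- that $\mu$ is Maurer--Cartan for the Shoikhet $\hoLie_2$-structure and that twisting reproduces the standard differentials on both $\fHGC_{1,n}$ and $CH(\Graphs_n)$. Without these verifications the claimed homotopy braces structure is not actually defined.
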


We want to remark that the main point of this paper is not that some such homotopy structures exists on $\fHGC_{1,n}$, which is clear by transfer and Theorem \ref{thm:PoisHKR}, but that one may find somewhat explicit formulas for these structures, and that furthermore, curiously, those formulas are provided by applying techniques and results from deformation quantization.

Finally, let us note that the techniques and results developed in this paper have several interesting implications and applications that might prove useful in future work:

\begin{itemize}
\item Through the action of the directed acyclic graph complexes $\GCor_2$ on the hairy graph complex $\fHGC_{1,n}$, we in particular obtain additional algebraic operations on $\fHGC_{1,n}$ that may be used to analyze its (still unknown) homology. These operations are described in more detail in section \ref{sec:natural ops} below.
\item The computation of the rational homotopy type of the mapping spaces of the $E_n$ operads in \cite{FTW} crucially uses Theorem \ref{thm:shoikhet_projection} of this paper, which is in turn a consequence of Theorem \ref{thm:brstructure}. 
\item During the proof of Theorem \ref{thm:brstructure} we in particular construct a homotopy braces formality morphism in the infinite dimensional setting, extending the construction of \cite{WillInfty} in finite dimensions. This result might be of independent interest to people in deformation quantization.
\end{itemize}

\subsection*{Structure of the paper}
Sections \ref{sec:notation}-\ref{sec:defq} contain mostly recollections about previous results this paper draws upon, in particular, the definition of Hochschild homology for multiplicative operads, and the constructions of formality morphisms by Kontsevich and Shoikhet in deformation quantization.

The key section is Section \ref{sec:action and proofs}, where the action of the directed acyclic graph operads and complexes on the hairy graph complexes is described.
We will work in slightly higher generality than necessary, including also the hairy graph complexes $\fHGC_{m,n}$.
The main results Theorems \ref{thm:main} and \ref{thm:brstructure} are then easy Corollaries shown in the same section.

The remaining sections contain some applications. 
Additional algebraic structures on the hairy graph complexes arising from our construction are discussed in section \ref{sec:natural ops}. 
The induced Lie bracket on the hairy graph homology is studied in section \ref{sec:lie bracket}, and the induced multiplicative structure (i.e., the cup product) in section \ref{sec:cup product}.

\subsection*{Acknowledgements}
This paper is a spin-off and prerequisite of my collaboration \cite{FTW} with B. Fresse and V. Turchin. I am very grateful to Benoit and Victor for discussions and their helpful comments.

\section{Notation and Hochschild homology of multiplicative operads}\label{sec:notation}
We generally work over a ground field $\K$ of characteristic zero. For some integral formulas below we have to require that $\K=\R$, but this will be noted. We will use the language of operads, and refer the reader to the textbook \cite{LVbook} for an introduction to the subject. In particular, we denote the associative operad by $\Ass$, and the $n$-Poisson operad, generated by two binary operations in degrees $0$ and $1-n$, by $\Pois_n$.
We will abbreviate
\[
 \e_n=\begin{cases}
       \Ass & \text{for $n=1$} \\
       \Pois_n & \text{for $n\geq 2$}
      \end{cases}.
\]

Furthermore, we will need to consider cofibrant (or quasi-free) resolutions of common operads. We will generally denote minimal such resolution by the prefix $\mathsf{ho}$. For example, we denote by $\hoLie=\Omega(\Lie^\vee)$ the minimal resolution of $\Lie$, i.e., the operadic cobar construction of the Koszul dual cooperad of the Lie operad.

\subsection{Multiplicative operads and Hochschild complex}
An operad $\op P$ with a map $\Ass\to \op P$ is called a multiplicative operad. For such an operad, the total space 
\[
CH(\op P) = \prod_r \op P(r)[r]
\]
carries a natural differential $d$ (the Hochschild differential) defined as follows. Temporarily denote by $m \in \op P(2)$ the image of the generator of $\Ass$. Then, for $x\in \op P(r)[r]$
\[
dx = \sum_{i=0}^{r+1} d_i x
\] 
with
\begin{align*}
d_0 x &= m\circ_2 x \\
 d_{r+1} x &= m\circ_1(x) \\
d_i x &= x\circ_i m \quad\quad\text{for $1\leq i\leq r $}.
\end{align*}
We call $(CH(\op P), d)$ the Hochschild complex of the multiplicative operad $\op P$. The Hochschild homology $HH(\op P)$ is the homology of the Hochschild complex. 
The usual Hochschild complex of algebras is a special case. If $A$ is a vector space, then an algebra structure on $A$ is the same as an operad map $\Ass\to \End(A)$, so that in particular $\End(A)$ is a multiplicative operad. The Hochschild complex $CH(\End(A))$ is then the standard cohomological Hochschild complex of the algebra $A$ with values in $A$.

The Hochschild complex $(CH(\op P), d)$ carries a natural action of the braces operad $\Br$, given by the same formulas as the braces action for the usual Hochschild complex of algebras, cf. \cite{GJ}. Concretely, for $x\in \op P(r)[r]$ and $x_1,\dots,x_k\in CH(\op P)$ we define the generating operations to be
\begin{align*}
x\{x_1,\dots,x_k\} &= \sum_{1\leq i_1<i_2<\cdots <i_k} \pm x\circ_{i_1,i_2,\dots ,i_k} (x_1,\dots,x_k) \\
x_1 \cup x_2 &= m\circ (x_1,x_2).
\end{align*}
Here, in the second line $m$ is again the image of the generator of $\Ass$ in $\op P$.
By the map $\Lie_1\to \Br$ the Hochschild complex $CH(\op P)$ in particular carries a Lie bracket. the Gerstenhaber bracket. Picking a quasi-isomorphism between a resolution of $\Poiss_2$ and $\Br$, this Lie algebra structure may be extended to a homotopy $\Poiss_2$ algebra structure, depending on the quasi-isomorphism picked. In any case, the Hochschild homology $HH(\op P)$ carries a natural $\Poiss_2$ algebra structure.

\begin{ex}
A relevant example for us will be the following. The multi-differential operators on a manifold $M$ naturally form an operad. It is a multiplicative operad, by sending the image of the generator to the obvious 2-differential operator 
\begin{gather*}
m : C^\infty(M)\otimes C^\infty(M) \to C^\infty(M) \\
(f,g) \mapsto fg.
\end{gather*}
We denote the associated Hochschild complex by $\Dpoly(M)$.
\end{ex}

\section{Graph operads and graph complexes}\label{sec:graphs}

In this section we will introduce several operads and Lie algebras whose elements are linear combinations or series of combinatorial graphs.

\subsection{Graph operads}
Let $\dgra_{r,k}$ denote the set of directed graphs with $r$ numbered vertices and $k$ numbered edges.
We define a vector space 
\[
\dGra_m(r) = \bigoplus_{k} \left( \K\langle \dgra_{r,k}\rangle  \otimes (\K[m-1])^{\otimes k} \right)_{S_k}
\]
as the coinvariants under the symmetric group action.
The spaces $\dGra_m(r)$ naturally assemble to an operad which we call $\dGra_m$. The operadic composition is by inserting a graph into a vertex of another, and summing over all ways of reconnecting the pending edges, see \cite{WillOriented} for more details.
The operad $\dGra_m$ naturally acts on the space of polynomials
\[
\K[x_1,\dots,x_d,p_1,\dots,p_d]
\]
where the $x_j$ are variables of degree 0 and the $p_j$ are variables of degree $m-1$. Concretely the action of a directed graph $\Gamma$ with $r$ vertices on functions $f_1,\dots,f_r$ is defined as
\begin{equation}\label{equ:dGraaction}
\Gamma (f_1,\dots,f_r)=\mu_r \left( \prod_{(i,j)\in E\Gamma} \sum_{\alpha=1}^d \left(\frac{\partial }{\partial p_\alpha}\right)_i\left(\frac{\partial }{\partial x_\alpha}\right)_j \right)(f_1\otimes\cdots \otimes f_r) .
\end{equation}
Note that this action naturally extends to smooth functions on the degree shifted cotangent bundle $C^\infty(T^*[m-1]\R^d)$, and in particular to the multivector fields $\Tpoly\cong C^\infty(T^*[1]\R^d)$ if $m=2$.

There is a sub-operad $\dGraor_m\subset \dGra_m$ spanned by the directed acyclic graphs. Its action on the polynomials $\K[x_1,\dots,x_d,p_1,\dots,p_d]$ in particular preserves the ideal of functions vanishing at $p_1=p_2=\dots=p_d=0$, which we denote by 
\[
p\K[x_1,\dots,x_d,p_1,\dots,p_d].
\]
Similarly, note that for $m=2$, $\dGraor_2$ preserves the multivector fields $\Tpoly^{\geq 1}$ of degree at least one.

Next we want to define a colored variant, and an colored operad of Kontsevich graphs.
We allow ourselves to be a bit brief, since the construction has been explained in detail in \cite{Dol} and \cite[section 3]{WillInfty}.

Denote by $\kgra_{r,k,s}$ ("Kontsevich graphs") the set of directed graphs with $r+s$ vertices numbered by $\{1,\dots,r,\bar 1,\dots, \bar s\}$ and $k$ numbered edges, such that no edges emanate from the vertices $\{1,\dots,r,\bar 1,\dots, \bar s\}$. We call the vertices $\{1,\dots,r\}$ the type I vertices and the vertices $\{\bar 1,\dots, \bar s\}$ the type two vertices. Here is an example of a graph in $\kgra_{r,k,s}$, with the numbering of the edges suppressed for simplicity. Note that we draw the type II vertices on a line in the following, instead of providing the numbering.
\[
\begin{tikzpicture}
 \draw(-.5,0)--(2.5,0);
 \node[ext] (v1) at (0,1) {$\scriptstyle 1$};
  \node[ext] (v2) at (1,1.5) {$\scriptstyle 2$};
  \node[ext] (v3) at (2,1) {$\scriptstyle 3$};
  \node[int] (w1) at (0,0) {};
  \node[int] (w2) at (1,0) {};
  \node[int] (w3) at (2,0) {};
  \draw[-latex] (v1) edge (w1) edge (w2) edge[bend left] (v2);
  \draw[-latex] (v2) edge[bend left] (v1) edge (v3);
  \draw[-latex] (v3) edge (v1) edge (w3);
\end{tikzpicture}
\]
Define the graded vector spaces
\[
\KGra(r) = \bigoplus_{k,s}\left( \K\langle \kgra_{r,k,s}\rangle  \otimes (\K[1])^{\otimes k} (\K[-1])^{\otimes s} \right)_{S_k}.
\]
The vector spaces $\KGra$ from a right operadic $\dGra_2$-module. Furthermore, they form an operadic left module under the braces operad $\Br$, i.e., an operadic $\Br-\dGra_2$-bimodule. In particular, the $\Br$ structure can be used to endow $\KGra$ with a differential defined as the Lie bracket (part of the $\Br$-operations) with the element 
\[
\begin{tikzpicture}
 \draw(-.5,0)--(1.5,0);
  \node[int] (w1) at (0,0) {};
  \node[int] (w2) at (1,0) {};
\end{tikzpicture}
\]
of $\KGra(0)$ of arity 0.
Furthermore, the representations of $\Br$ on $\Dpoly$ and $\dGra_2$ on $\Tpoly$ may be extended to include the operadic bimodule $\KGra$, equipped with the differential above. Concretely, this means that to each graph with $r$ type I vertices one has to assign a map 
\[
\Tpoly^{\otimes r} \to \Dpoly.
\] 
This may be done by a formula analogous to \eqref{equ:dGraaction}.
Overall, one finds that one has an action of the two-colored operad generated by $\KGra$,
\[
\bpm \Br & \KGra & \dGra_2 \epm
\]
on the two-colored vector space 
\[
\Dpoly \oplus \Tpoly.
\]
We again refer to \cite[section 3]{WillInfty} for more details.
The operadic $\Br-\dGra_2$-bimodule $\KGra$ gives rise to a $\Br-\dGraor_2$-bimodule $\KGraor\subset\KGra$ spanned by the graphs without directed cycles, where we consider graphs with vertices without outgoing edges to be zero. The colored operad 
\[
\bpm \Br & \KGraor & \dGraor_2 \epm
\]
it generates acts on the two-colored vector space 
\[
\Dpoly^{\geq 1} \oplus \Tpoly^{\geq 1}.
\]

\subsection{Undirected variant}
There are several similar variants of the graph operads above. For example, the operad $\dGra_m$ contains a sub-operad
\[
 \Gra_m \subset \dGra_m
\]
that consists of linear combinations of graphs (anti-)invariant under flipping edge directions.
This means that elements of $\Gra_m(r)$ may be depicted as linear combinations of undirected graphs with $r$ vertices, e.g., 
\[
 \begin{tikzpicture}
  \node[ext] (v1) at (0,0) {$\scriptstyle 1$};
  \node[ext] (v2) at (1,0) {$\scriptstyle 2$};
  \node[ext] (v3) at (.5,.7) {$\scriptstyle 3$};
  \draw (v2) edge (v1) edge (v3) (v1) edge (v3);
 \end{tikzpicture}.
\]
The inclusion into the operad of directed graphs is then realized by summing over all ways of assigning egde directions, with appropriate signs. Formally, this means that each undirected edge is replaced by the linear combination of edges
\[
 \begin{tikzpicture}[baseline=-.65ex]
  \draw[-latex] (0,0) -- (1,0);
 \end{tikzpicture}
 +(-1)^m
 \begin{tikzpicture}[baseline=-.65ex]
  \draw[latex-] (0,0) -- (1,0);
 \end{tikzpicture}
 .
\]

\subsection{Variant with external legs}
We will also consider a slightly larger variant of the oriented graph operads $\dGra_m$
\[
 \vGra_m \supset \Gra_m.
\]
Concretely, we define $\vGraor_m(r)$ to consist of $\K$-linear series of graphs with ``external legs'', e.g.,
\begin{equation}\label{equ:vGraorsample}
 \begin{tikzpicture}
   \node[ext] (v1) at (0,0) {$\scriptstyle 1$};
   \node[ext] (v2) at (1,0) {$\scriptstyle 2$};
   \node[ext] (v3) at (0,1) {$\scriptstyle 3$};
   \node[ext] (v4) at (1,1) {$\scriptstyle 4$};
   \draw[-latex] (v1) edge (v2) edge (v3) edge (v4) (v2) edge (v3) edge (v4) (v3) edge (v4) 
   (v4) edge +(.5,.5) edge +(0,.5) edge +(.5,0)
   (v1) edge +(-.5,-.5);
 \end{tikzpicture}.
\end{equation}

The operadic compositions $\Gamma_1\circ j\Gamma_2$ are defined similarly to that on $\dGra_m$, except that one attaches outgoing edges at vertex $j$ of $\Gamma_1$ only to external legs in $\Gamma_2$. In particular, if the number of outgoing edges at vertex $j$ is not equal to the number of external legs present in $\Gamma_2$, then the composition is defined to be zero.
The operad $\vGraor_m$ acts on the space of polynomials
\[
\K[x_1,\dots,x_d,p_1,\dots,p_d].
\]
The inclusion $\Gra_m\to \vGra_m$ is realized by the map 
\[
 \Gamma\mapsto \sum_{j\geq 0} \frac{1}{j!} \underbrace{
 \begin{tikzpicture}[baseline=-.65ex]
     \node[ext] (v1) at (0,.5) {$\scriptstyle \Gamma$};
     \node at (0,.1) {$\scriptstyle \dots$};
     \draw[-latex] (v1) edge +(-.5,-.5) edge +(-.75,-.5) edge +(.5,-.5) edge +(.75,-.5);
 \end{tikzpicture}
 }_{j\times}
\]
attaching external legs to $\Gamma$ in all possible ways.
Evidently, the right action of $\dGra_2$ on $\KGra$ factors through $\vGra_2$.

Finally, the operad $\vGra_m$ contains a subquotient $\vGraor_m$ consisting of series of directed \emph{acyclic} graphs only, and in which we furthermore consider graphs with vertices without outgoing edges to be zero. (For example, the directed graph \eqref{equ:vGraorsample} above describes a non-zero element of $\vGraor_m(4)$.
The operad $\Graor_m$ embeds into $\vGraor_m$ by the formula
\[
 \Gamma\mapsto \sum_{j\geq 1} \frac{1}{j!} \underbrace{
 \begin{tikzpicture}[baseline=-.65ex]
     \node[ext] (v1) at (0,.5) {$\scriptstyle \Gamma$};
     \node at (0,.1) {$\scriptstyle \dots$};
     \draw[-latex] (v1) edge +(-.5,-.5) edge +(-.75,-.5) edge +(.5,-.5) edge +(.75,-.5);
 \end{tikzpicture}
 }_{j\times}.
\]
Furthermore, the action of $\Graor_2$ on $\KGraor$ factors through $\vGraor_m$.
In particular, one has the inclusions of two colored operads 
\[
\bpm \Br & \KGraor & \Graor_2 \epm \subset \bpm \Br & \KGraor & \vGraor_2 \epm.
\]

\subsection{Graph complexes}
The graph operads above come with a natural map of operads
\[
\Pois_{m} \to \dGraor_{m} \to \dGra_{m}
\]
defined on the generating operations by
\begin{align}\label{equ:generatormap}
- \wedge - &\mapsto 
\begin{tikzpicture}[baseline=-.65ex]
\node[ext] (v) at (0,0) {1};
\node[ext] (w) at (0.7,0) {2};
\end{tikzpicture}
&
[-,-] &\mapsto 
\begin{tikzpicture}[baseline=-.65ex]
\node[ext] (v) at (0,0) {1};
\node[ext] (w) at (1,0) {2};
\draw[-latex] (v) edge (w) ;
\end{tikzpicture}
+
\begin{tikzpicture}[baseline=-.65ex]
\node[ext] (v) at (0,0) {1};
\node[ext] (w) at (1,0) {2};
\draw[-latex] (w) edge (v) ;
\end{tikzpicture}\, .
\end{align}

The full graph complexes are defined as deformation complexes 
\begin{align*}
\fGCor_m &= \Def(\hoLie_m \to \Graor_m) 
&
\fdGC_m &= \Def(\hoLie_m \to \dGra_m). 
\end{align*}
Both contain subcomplexes of connected graphs $\GCor_m\subset \fGCor_m$ and $\dGC_m\subset \fdGC_m$. For more details on these definitions we refer the reader to \cite{WillOriented}.
The main Theorem relating these two complexes is the following
\begin{thm}[{\cite{WillOriented}}]\label{thm:GCorGC}
There is an isomorphism of Lie algebras respecting the loop order grading
\[
H(\GCor_{m+1}) \cong H(\GC_m).
\]
\end{thm}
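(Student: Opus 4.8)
The plan is to realize both sides through the same combinatorics and then compare them by a spectral sequence, since (as I explain below) no naive map between them is a chain map. First I would make the two differentials explicit. In both $\GC_m=\Def(\hoLie_m\to\Gra_m)$ and $\GCor_{m+1}=\Def(\hoLie_{m+1}\to\Graor_{m+1})$ the differential is the bracket with the edge of \eqref{equ:generatormap}, i.e.\ the vertex-splitting differential: a graph is sent to the signed sum, over its vertices $v$, of the graphs obtained by replacing $v$ with two vertices joined by a new edge and redistributing the half-edges at $v$ in all ways; in the oriented case the new edge is directed and one retains only those distributions that keep the graph acyclic. Splitting one vertex raises the vertex count and the edge count each by one, hence preserves the first Betti number $b_1=\#\{\text{edges}\}-\#\{\text{vertices}\}+1$. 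Both complexes therefore split over loop order, and within a fixed loop order the number of vertices is bounded, so each piece is finite dimensional in every vertex degree; it suffices to treat one loop order at a time. I would also record that the Lie bracket on both sides is graph insertion, so that any comparison respecting the vertex filtration will automatically be tracked as a map of Lie algebras.

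The subtle point, and the reason a spectral sequence is needed, is that the two obvious candidate maps fail to commute with the differential. Forgetting orientations, or dually summing an undirected graph over all of its acyclic orientations, does \emph{not} give a chain map: contracting the new edge produced by a vertex split can turn an acyclic orientation into a graph carrying a directed cycle, so the acyclic orientations of a split graph do not biject with the (orientation, new-edge) data summed in the oriented differential. I would therefore compare $\GCor_{m+1}$ and $\GC_m$ not by a single map but through the associated graded of a filtration of $\GCor_{m+1}$ that isolates the orientation data. Concretely I would filter by the number of vertices that are sources or sinks (equivalently by an ``orientation defect''), so that on the associated graded only the splittings into two genuinely internal vertices survive and the placement of edge directions at internal vertices becomes homotopically free. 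The passage from $m$ to $m+1$, which raises the intrinsic degree of every edge by one, is exactly what is required for the flip-parities of an undirected edge in $\GC_m$ and a directed edge in $\GCor_{m+1}$ to be compatible, which is why the comparison lands one loop order higher and only respects the coarser loop-order grading.

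The main obstacle is the associated-graded computation: one must show that the ``orientational'' part of the differential is acyclic apart from a single surviving class per underlying undirected graph, and that these surviving classes assemble, with the correct signs, into $\GC_m$ with its vertex-splitting differential. I would attack this by fixing the underlying undirected graph $G$ and proving that the finite complex spanned by the acyclic orientations of $G$ (together with their internal splittings), equipped with the induced orientational differential, has homology concentrated in one class mapping isomorphically to $G$; this is a Morse-matching/cancellation statement on the poset of acyclic orientations, and it is precisely here that the defining acyclicity of $\Graor_{m+1}\subset\dGra_{m+1}$ is essential. Granting this local acyclicity, the spectral sequence collapses and identifies $H(\GCor_{m+1})$ with $H(\GC_m)$ compatibly with loop order; finally, because the entire argument is carried out compatibly with the insertion bracket, the resulting isomorphism is one of graded Lie algebras, as claimed.
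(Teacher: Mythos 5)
The paper does not actually prove this theorem; it imports it from \cite{WillOriented}, so your attempt has to be measured against the proof given there. Your proposal has a genuine gap at its core: the whole strategy rests on the claim that, after passing to a suitable associated graded, the homology of $\GCor_{m+1}$ is computed ``per underlying undirected graph'', with one surviving class for each undirected $G$, and that these classes assemble into $\GC_m$. This cannot work, for a bookkeeping reason. With the paper's conventions a graph with $v$ vertices, $e$ edges and loop order $g=e-v+1$ sits (up to an overall shift) in degree $(m-1)g-(v-1)$ in $\GC_m$, while the same combinatorial data placed in $\GCor_{m+1}$ sits in degree $mg-(v-1)$. Hence a degree- and loop-order-preserving isomorphism must match a class with $v$ vertices in $\GC_m$ with classes represented by graphs with $v+g$ vertices and $e+g$ edges in $\GCor_{m+1}$: the underlying undirected graphs on the two sides are necessarily different. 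The paper's own example makes this concrete: the theta graph (two vertices, three edges, loop order two) in $\GC_1$ corresponds to the cocycle \eqref{equ:GCorclass} in $\GCor_2$, whose representatives have four vertices and five edges. So no filtration of $\GCor_{m+1}$ whose associated graded is indexed by the underlying undirected graph can yield the theorem, and your key lemma (the complex of acyclic orientations of a fixed $G$ contributing exactly the class of $G$) is not the right local statement.

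Two smaller misdiagnoses compound this. Summing an undirected graph over \emph{all} orientations is a perfectly good chain map and quasi-isomorphism $\GC_m\to\dGC_m$ (this is exactly how the inclusion $\Gra_m\subset\dGra_m$ is defined in the paper); what fails is only the restriction to acyclic orientations, and the failure is repaired not by a sign or parity adjustment but by the vertex-count shift above. Also, your remark that ``the comparison lands one loop order higher'' contradicts the statement being proved, which asserts that the isomorphism respects the loop order grading; it is the vertex count, not the loop order, that shifts (by $g$). The actual proof in \cite{WillOriented} is correspondingly more involved: it passes through intermediate complexes and an analysis of source vertices of acyclic graphs (showing for instance that graphs with too many sources span acyclic subcomplexes), and the $g$ extra vertices on the oriented side emerge from that analysis; it is the main theorem of that paper and does not reduce to a Morse matching on the acyclic orientations of a fixed graph.
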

We do not have complete knowledge as to what the graph homology $H(\GC_m)$ is, although much is known by now, see the introduction in \cite{KWZ} for an overview.
Fortunately, the most important classes for us will live in $H^1(\GCor_2)\cong H^1(\GC_1)$. The right-hand side is known to be a one-dimensional space, spanned by the ``theta''-graph
\[
 \begin{tikzpicture}
   \node[int] (v) at (0,0) {};
   \node[int] (w) at (0.7,0) {};
   \draw (v) edge (w) edge[bend left=40] (w) edge[bend right=40] (w);
 \end{tikzpicture}
.
\]
The corresponding class in the directed acyclic graph complex $\GCor_2$ is represented by the linear combination
\begin{equation}\label{equ:GCorclass}
\begin{tikzpicture}[baseline={(current bounding box.center)}, scale=.5, every edge/.style={-latex,draw}]
\node[int] (v1) at (-1,1.5) {};
\node[int] (v2) at (-1,-0.5) {};
\node[int] (v4) at (-2,0.5) {};
\node[int] (v3) at (0,0.5) {};
\draw  (v1) edge (v2);
\draw  (v3) edge (v2);
\draw  (v3) edge (v1);
\draw  (v4) edge (v1);
\draw  (v4) edge (v2);
\end{tikzpicture}
+
2\
\begin{tikzpicture}[baseline={(current bounding box.center)},yshift=.5, scale=.5, every edge/.style={-latex,draw}]
\node[int] (v1) at (-1,1.5) {};
\node[int] (v2) at (-1,-0.5) {};
\node[int] (v4) at (-2,0.5) {};
\node[int] (v3) at (0,0.5) {};
\draw  (v1) edge (v2);
\draw  (v2) edge (v3);
\draw  (v1) edge (v3);
\draw  (v4) edge (v1);
\draw  (v4) edge (v2);
\end{tikzpicture}
+
\begin{tikzpicture}[baseline={(current bounding box.center)}, scale=.5, every edge/.style={-latex,draw}]
\node[int] (v1) at (-1,1.5) {};
\node[int] (v2) at (-1,-0.5) {};
\node[int] (v4) at (-2,0.5) {};
\node[int] (v3) at (0,0.5) {};
\draw  (v1) edge (v2);
\draw  (v2) edge (v3);
\draw  (v1) edge (v3);
\draw  (v1) edge (v4);
\draw  (v2) edge (v4);
\end{tikzpicture}
.
\end{equation}

Clearly, through the maps $\Gra_m\to \vGra_m$ and $\Gra_m\to \vGraor_m$ we obtain inclusions on $\Pois$ in those larger graph complexes. In particular, we may consider the graph complexes 
\begin{align*}
\fvGCor_m &= \Def(\hoLie_m \to \vGraor_m) 
&
\fvGC_m &= \Def(\hoLie_m \to \vGra_m),
\end{align*}
along with the subcomplexes of connected graphs $\vGCor_m\subset \fvGCor_m$ and $\vGC_m\subset \fvGC_m$.

These complexes are highly related, we state here only the following result of \cite{WillOriented}.

\begin{prop}[Proposition 3 of \cite{WillOriented}]\label{prop:GChGC}
 The map $\GCor_m \to \vGCor_m$ is a quasi-isomorphism up to the class in $H(\vGCor_m)$ represented by the graph cocycle
\begin{equation}
\label{equ:singleclass}
 \sum_{j\geq 2} \frac{j-1}{j!}
 \underbrace{\begin{tikzpicture}[baseline=-2.5ex, scale=.7]
  \node[int] (v) at (0,0) {};
 \coordinate (v0) at (-.7,-1);
  \coordinate (v1) at (-.3,-1);
  \coordinate (v2) at (.3,-1);
  \coordinate (v3) at (.7,-1);
  \draw[dedge] (v) edge (v0) edge (v1) edge (v2) edge (v3);  
 \end{tikzpicture}
 }_{j\times}.
\end{equation}
\end{prop}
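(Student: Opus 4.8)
The plan is to show that the mapping cone $C$ of the chain map $f\colon \GCor_m \to \vGCor_m$ has one-dimensional homology, concentrated in the bidegree of the cocycle \eqref{equ:singleclass}. Both complexes are deformation complexes $\Def(\hoLie_m\to -)$, so their underlying spaces are spanned by connected directed acyclic graphs (without external legs for $\GCor_m$, with external legs for $\vGCor_m$), and in both the differential is the bracket $d=[\mu,-]$ with the Maurer--Cartan element $\mu$, the image of the Lie generator. Since $f$ is the leg-completion embedding, which attaches legs to the same underlying vertices, it preserves the number of vertices, whereas $d$ (vertex splitting, resp.\ gluing on the extra vertex of $\mu$) always raises the number of vertices by exactly one. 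Hence I would first organize everything by the \emph{number of vertices}: this is an honest grading on which $f$ has degree $0$ and $d$ has degree $+1$, so $C$ is computed degreewise via the long exact sequence and it suffices to locate its homology vertex-rank by vertex-rank.

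Next I would filter by the number of external legs and isolate the part of $d$ that changes the leg count by one, namely the term $d_{-1}$ in which the outgoing half-edge produced by $\mu$ is glued onto an existing external leg, together with its partner that emits a new leg. The heart of the argument is a Koszul/de Rham type acyclicity: I would write down the explicit homotopy $h$ that re-emits one external leg at a vertex and compute $d_{-1}h+hd_{-1}$ to be a strictly positive leg-counting operator. On every sector whose underlying core graph carries at least one internal edge this operator is invertible, so $f$ induces an isomorphism on homology there and the cone is acyclic; this is precisely the statement that $f$ is a quasi-isomorphism away from the graphs with no internal edges.

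The remaining sector, where the core graph is a single vertex carrying only external legs, is where the exceptional class appears and is the main obstacle. Here the only graphs are the stars $\sigma_j$ (one vertex with $j$ outgoing legs), the leg-counting operator has a kernel, and the homotopy degenerates, so I would compute the small complex $\bigl(\bigoplus_{j\ge 1}\K\,\sigma_j,\,d\bigr)$ by hand together with the image of the single-vertex generator of $\GCor_m$ under $f$. The delicate points are exactly the conventions of the oriented setting: a vertex without an outgoing edge is declared to be zero, the legs carry the $1/j!$ symmetry factors, and the edge degree $m-1$ dictates the signs. One must check that, with these conventions, the cokernel of $f$ in this sector is one-dimensional, that it is spanned by $\sum_{j\ge 2}\tfrac{j-1}{j!}\sigma_j$, that this element is annihilated by the \emph{full} differential $d$ (not merely by $d_{-1}$), and that it is not a boundary. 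Granting this computation, $C$ has homology $\K$ generated by \eqref{equ:singleclass} and is acyclic in all other sectors, which is the assertion of the proposition.
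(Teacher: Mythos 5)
First, a point of reference: this paper does not actually prove Proposition \ref{prop:GChGC}; it imports it verbatim as Proposition 3 of \cite{WillOriented}. So there is no in-text proof to compare against, and your proposal has to stand on its own. Its overall architecture is the right shape: contract the ``generic'' part of the cone by an explicit homotopy and treat the one-vertex star sector by hand, which is indeed where the class \eqref{equ:singleclass} lives; and your list of delicate points (out-degree $\geq 1$ convention, the $1/j!$ factors, closedness under the full differential, non-exactness) is on target.

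The gap is in the central step. In $\vGCor_m$ the differential $[\mu,-]$ never \emph{decreases} the number of external legs: inserting the two-vertex element $\mu$ into a vertex of $\Gamma$ consumes the legs of $\mu$ and preserves those of $\Gamma$, while inserting $\Gamma$ into a vertex of $\mu$ consumes the legs of $\Gamma$ against the out-half-edges of that vertex and leaves the legs of $\mu$'s term, whose number is forced to be $\geq$ the leg count of $\Gamma$ (the new vertex must itself have out-degree $\geq 1$). So the operator $d_{-1}$ you propose to contract --- a piece lowering the leg count by one --- does not exist, and the associated graded of the filtration by the number of legs is the leg-\emph{preserving} part of $d$, which contains the entire vertex-splitting differential, i.e.\ precisely the interesting part whose homology is $H(\GCor_m)$. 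The identity $d_{-1}h+hd_{-1}=(\text{positive counting operator})$ therefore has no candidate $d_{-1}$, and the homotopy $h$ (``re-emit a leg'') produces uncontrolled cross terms against the vertex-splitting and source-attaching pieces that sit in the same filtration degree. Two further structural problems: the map $f$ sends a graph to a sum over \emph{all} leg numbers, so it is not filtered for the leg filtration and the cone inherits no useful filtration from it; and the grading by the number of vertices buys nothing, since every term of $d$ raises both the vertex and the edge count by exactly one. Finally, even granting a homotopy, ``the counting operator is invertible on a sector'' would give \emph{vanishing} homology there, not that $f$ is an isomorphism onto it --- one needs the kernel of the operator to match the image of $f$ exactly, and the star sector (which is not a subcomplex, as $d\sigma_j$ has two vertices) only separates cleanly after a filtration has been chosen that genuinely isolates a contractible piece. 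That choice is the actual content of the proof in \cite{WillOriented}, and it is the step your sketch does not supply.
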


Let us also remark that the above additional class \eqref{equ:singleclass} lives in degree $0$ and does not give additional freedom in the choice of Maurer-Cartan elements in the complexes $\vGCor_m$. In particular, the following analog of Theorem \ref{thm:oriented MC} holds also for the graph complexes with external legs.

\begin{cor}\label{cor:Lieunique}
There is a one parameter family of operad maps
\[
 \phi_\lambda \colon \hoLie_2 \to \vGra_2
\]
corresponding to the MC elements $m_\lambda$ of Theorem \ref{thm:oriented MC}.
Furthermore, on the space of operad maps $\phi:\hoLie_2 \to \vGra_2$ deforming the standard map (factoring through $\Lie_2$)
there is a homotopy invariant function $F$ such that any such $\phi$ is homotopic to $\phi_{F(\phi)}$.
\end{cor}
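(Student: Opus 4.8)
The plan is to translate the statement into the Maurer--Cartan formalism and then reduce it to Theorem \ref{thm:oriented MC} by feeding in the cohomology computation of Proposition \ref{prop:GChGC}. Since $\hoLie_2$ is quasi-free, operad maps $\phi\colon\hoLie_2\to\vGraor_2$ deforming the standard one (the composite $\hoLie_2\to\Lie_2\to\vGraor_2$) are in natural bijection with Maurer--Cartan elements of the deformation complex $\fvGCor_2=\Def(\hoLie_2\to\vGraor_2)$, and homotopies of maps correspond exactly to gauge equivalences of Maurer--Cartan elements; this is the standard dictionary of \cite{LVbook,WillOriented}. By the usual reduction to connected graphs the Maurer--Cartan classification is governed by the connected subcomplex $\vGCor_2$. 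Thus it suffices to classify the Maurer--Cartan elements of $\vGCor_2$ up to gauge, exhibit a gauge invariant $F$, and transport everything back through the dictionary; the family $\phi_\lambda$ will be the image of the family $m_\lambda$ under $\GCor_2\to\vGCor_2$, and $F$ the function of Theorem \ref{thm:oriented MC} read off through the same map.

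The key cohomological input is Proposition \ref{prop:GChGC}: the map $\GCor_2\to\vGCor_2$ is a quasi-isomorphism up to the single cocycle \eqref{equ:singleclass}, which by the preceding remark sits in degree $0$. Consequently it induces an isomorphism $H^{i}(\GCor_2)\cong H^{i}(\vGCor_2)$ for every $i\neq 0$. In particular the ``tangent space'' $H^{1}(\vGCor_2)\cong H^{1}(\GCor_2)$ is the one-dimensional space spanned by the class \eqref{equ:GCorclass}, and the obstruction space $H^{2}$ is likewise unchanged. Running the order-by-order obstruction argument in the filtration by the number of vertices --- obstructions to extending a partial Maurer--Cartan element lying in $H^{2}$ and the remaining freedom measured by $H^{1}$ --- therefore produces the same moduli as for $\GCor_2$. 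By Theorem \ref{thm:oriented MC} every Maurer--Cartan element of $\GCor_2$ is gauge equivalent to exactly one $m_\lambda$ with $\lambda=F$ a gauge invariant factoring through $H^{1}$; since $H^{1}$ is unchanged, the same $F$, read off as the coefficient of the class \eqref{equ:GCorclass}, classifies the Maurer--Cartan elements of $\vGCor_2$ and is homotopy invariant because it factors through cohomology. Translating back, every $\phi$ is homotopic to $\phi_{F(\phi)}$.

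The one genuinely new phenomenon, and the step I expect to require the most care, is the extra degree-$0$ class \eqref{equ:singleclass}. It enlarges $H^{0}(\vGCor_2)$ by one dimension, i.e.\ it contributes one new infinitesimal gauge symmetry coming from the univalent external-leg graph. The hard part is to confirm that this does not enlarge the Maurer--Cartan moduli: all Maurer--Cartan deformations live in $H^{1}$, which is unaffected, so the new class can only coarsen the gauge equivalence relation and never create a gauge-inequivalent deformation. One must check that the associated gauge transformation integrates (it acts essentially as a rescaling on the external legs) and is compatible with $F$, so that the one-parameter family $\{m_\lambda\}$ together with the invariant $F$ still accounts for everything. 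Finally, one has to track the convergence of the series of graphs in $\vGraor_2$ throughout: the filtration by vertex number has finite-dimensional graded pieces, so both the obstruction theory and the integration of gauge transformations converge degreewise, exactly as in the finite-dimensional treatment of \cite{WillInfty}.
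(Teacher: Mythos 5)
Your proposal is correct and follows essentially the same route as the paper, which disposes of the corollary by remarking that the extra class \eqref{equ:singleclass} of Proposition \ref{prop:GChGC} sits in degree $0$ and hence cannot affect the Maurer--Cartan moduli (degree $1$) or the obstructions (degree $2$), so that Theorem \ref{thm:oriented MC} transfers verbatim to $\vGCor_2$. Your write-up merely makes explicit the dictionary between operad maps out of the quasi-free $\hoLie_2$ and Maurer--Cartan elements of the deformation complex, and the observation that a new degree-$0$ class can only enlarge the gauge group, both of which the paper leaves implicit.
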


\begin{rem}\label{rem:1in1outMC}
 It will be important below that the Maurer-Cartan elements $m_\lambda$ of Theorem \ref{thm:oriented MC} may be chosen such that all vertices have either at least two incoming edges, or two outgoing edges. In other words, univalent vertices and vertices of the shape
 \[
  \begin{tikzpicture}
   \node[int] (v) at (0,0) {};
   \draw[-latex] (v) edge +(.5,0);
   \draw[latex-] (v) edge +(-.5,0);
  \end{tikzpicture}
 \]
can be forbidden.
To see this, one realizes that the subcomplex of $\GCor_m$ formed by such graphs is a dg Lie subalgebra quasi-isomorphic to the full graph complex, cf. \cite{WillOriented}.
\end{rem}

\subsection{Twisted graph operads}
Given an $L_\infty$ algebra $\alg g$ and a Maurer-Cartan element $m\in \alg g$ we may (setting some convergence issues aside) define the twisted $L_\infty$ algebra $\alg g^m$. Now suppose that the $L_\infty$ algebra structure is part of a larger algebraic structure governed by an operad $\op P$, i.e., one has morphisms
\[
\hoLie\to \op P \to \End(\alg g).
\]
The one can ask whether one can also twist the $\op P$-structure, i.e., whether the map 
\[
\hoLie\to \End(\alg g^m)
\]
can be made to factor through $\op P$. In general, this is not possible. However, one may define a larger operad $\Tw \op P$ with a map $\Tw \op P\to \op P$ such that the above action factors as
\[
\hoLie\to \Tw \op P \to \End(\alg g^m).
\]
The construction (operadic twisting) is developed in more detail in \cite{DolWill}. Explicitly, the operad $\Tw\op P$ is the operad generated by $\op P$ and a zero-ary operation representing the Maurer-Cartan element $m$, up to completion on the number of such elements present.

Applying the twisting construction to the operad $\Gra_m$ we obtain an operad $\Tw\Gra_m$ whose elements are $\K$-linear series of graphs, some of whose vertices are "unidentifiable" (i.e., filled by the formal avatar of the Maurer-Cartan element), like the following graph, where the inputs filled by $m$ have been marked by coloring the corresponding vertices black.
\[
\begin{tikzpicture}
 \node[ext] (v1) at (0,0) {1};
 \node[ext] (v2) at (1,0) {2};
 \node[ext] (v3) at (2,0) {3};
 \node[ext] (v4) at (3,0) {4};
 \node[int] (w1) at (1,1) {};
 \node[int] (w2) at (1,2) {};
 \node[int] (w3) at (2,1) {};
 \node[int] (w4) at (2,2) {};
 \node[int] (w5) at (3,1) {};
 \node[int] (w6) at (4,1) {};
 \draw (v1) edge (v2) edge (w1) 
       (w2) edge (w1) edge (w3) edge (w4)
       (w3) edge (v3) edge (v2) edge (w1) edge (w4)
       (w5) edge (w6);
\end{tikzpicture}
\]
In fact, we will pass to a slightly smaller sub-operad $\Graphs_{m}\subset \Tw\Gra_m$ by requiring that all connected components of graphs must have at least one numbered ("external") vertex.
There is a map $\Graphs_m\to \Gra_m$ by sending graphs with internal vertices to zero, and we have a map $\Pois_m\to \Graphs_m$ given by mapping the generators as follows.
\begin{align*}
- \wedge - &\mapsto 
\begin{tikzpicture}[baseline=-.65ex]
\node[ext] (v) at (0,0) {1};
\node[ext] (w) at (0.7,0) {2};
\end{tikzpicture}
&
[-,-] &\mapsto 
\begin{tikzpicture}[baseline=-.65ex]
\node[ext] (v) at (0,0) {1};
\node[ext] (w) at (1,0) {2};
\draw (v) edge (w) ;
\end{tikzpicture}
\end{align*}
The important fact for us is the following result.
\begin{thm}[Kontsevich \cite{K2}, Lambrechts-Voli\'c \cite{LVformal}]
The map $\Pois_m\to \Graphs_m$ is a quasi-isomorphism of operads.
\end{thm}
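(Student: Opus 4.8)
The plan is to detect the cohomology of $\Graphs_m$ geometrically, by realizing it inside a model whose cohomology is already understood to be the Poisson operad. That model is the Fulton--MacPherson operad $\FM_m$ of compactified configuration spaces of points in $\R^m$. Since $\FM_m$ is a model for the little $m$-disks operad, its homology is $H_\bullet(\FM_m)\cong \Pois_m$ (F.~Cohen), and dually $H^\bullet(\FM_m)$ is the Poisson cooperad, with relations given precisely by the Arnold relations among the direction classes $\omega_{ij}$. The strategy, due to Kontsevich and made rigorous by Lambrechts--Voli\'c, is to produce a map of dg operads from $\Graphs_m$ to the piecewise semi-algebraic de Rham forms on $\FM_m$ and to prove it is a quasi-isomorphism.

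First I would construct the integration map. To a graph $\Gamma\in\Graphs_m(r)$ with external vertices $1,\dots,r$ and $s$ internal vertices, assign the push-forward
\[
 I(\Gamma) = \pi_* \bigwedge_{(i,j)\in E\Gamma} \phi_{ij}^*(\mathrm{vol}_{S^{m-1}}) \in \Omega_{\mathrm{PA}}(\FM_m(r)),
\]
where $\pi\colon \FM_m(r+s)\to \FM_m(r)$ forgets the internal points, $\phi_{ij}\colon \FM_m(r+s)\to S^{m-1}$ is the normalized direction from point $i$ to point $j$, and $\pi_*$ is fiber integration of PA forms. This defines a morphism of collections $I\colon \Graphs_m\to \Omega_{\mathrm{PA}}(\FM_m)$; on the generators it sends the edgeless two-vertex graph to $1$ and the single-edge graph to $\omega_{12}$, i.e.\ to the generators of $H^\bullet(\FM_m(2))\cong H^\bullet(S^{m-1})$, so that $I$ realizes on the image of $\Pois_m$ the asserted identification $\Pois_m\cong H^\bullet(\FM_m)$.

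Next I would verify that $I$ is a morphism of dg operads. By the fiberwise Stokes theorem for PA forms, $dI(\Gamma)-I(\delta\Gamma)$ is a sum of integrals over the codimension-one boundary faces of the fibre of $\pi$, indexed by the ways in which a subset of points collides or runs to infinity. The principal faces, where exactly two points collide, reproduce exactly the edge-contraction and vertex-splitting terms of the twisting differential $\delta$ on $\Graphs_m$, so these match; compatibility of $I$ with operadic composition is checked similarly, by matching the insertion of one graph into a vertex of another with the corresponding boundary stratum of $\FM_m$.

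The main obstacle is the remaining, hidden, boundary faces: those where three or more points collide, or where internal points escape to infinity. These must all be shown to contribute zero, which is the technical core of the argument (Kontsevich's vanishing lemmas). The vanishing is established by degree and dimension counting on each face, supplemented by symmetry and involution arguments that force the relevant integrals to vanish or to cancel in pairs. Once $I$ is known to be a morphism of dg operads, it remains to prove it is a quasi-isomorphism, which I would do by computing $H(\Graphs_m(r))$ directly: first discard internal vertices of valence $\leq 2$, which span an acyclic subcomplex killed by the standard homotopy that creates and removes bivalent vertices and antennae, and then use that the twisting differential preserves the loop order (first Betti number) of graphs to split the reduced complex by genus. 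The genus-zero part computes $\Pois_m(r)$, while comparison with the already-known $H^\bullet(\FM_m)$ through $I$ shows that the positive-genus parts carry no cohomology surviving to the target; hence the surviving classes are exactly those in the image of $\Pois_m(r)$, and $\Pois_m(r)\to\Graphs_m(r)$ is a quasi-isomorphism for every $r$.
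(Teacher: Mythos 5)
The paper does not actually prove this theorem; it is imported from Kontsevich and Lambrechts--Voli\'c, and your proposal correctly reconstructs the strategy of those references: the configuration-space integral $I(\Gamma)=\pi_*\bigwedge_{(i,j)}\phi_{ij}^*\mathrm{vol}_{S^{m-1}}$ into the PA forms on the Fulton--MacPherson operad $\FM_m$, compatibility with the differential via fiberwise Stokes together with the vanishing of the hidden boundary faces, and the matching of operadic composition with the boundary strata. (A minor bookkeeping point: $I$ is naturally a map out of the dual \emph{cooperad} of graphs into $\Omega_{\mathrm{PA}}(\FM_m)$, which is a cooperad only up to a K\"unneth zig-zag; Lambrechts--Voli\'c handle this carefully, but it does not affect the substance.)

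The genuine gap is in your last step. Splitting the reduced complex by loop order and identifying the loop-order-zero cohomology with $\Pois_m(r)$ is fine, but your treatment of positive loop orders is circular: observing that a positive-genus cohomology class is killed by $I_*$ (because $H^\bullet(\FM_m(r))$ is generated in genus zero) only establishes that $\Pois_m(r)\to H(\Graphs_m(r))$ is a \emph{split injection}. It says nothing about whether $H(\Graphs_m(r))$ contains additional positive-genus classes that $I$ simply fails to detect --- and if it did, the theorem would be false. Surjectivity, i.e.\ acyclicity of the positive-loop-order part, is precisely the hard point and must be proved intrinsically. The standard route (essentially what Lambrechts--Voli\'c do) is an induction on the arity $r$ mirroring the Fadell--Neuwirth fibrations: filter $\Graphs_m(r)$ by the combinatorics at the last external vertex and show the complex retracts onto $\Graphs_m(r-1)\otimes H^{-\bullet}\bigl(\bigvee_{r-1}S^{m-1}\bigr)$, matching the known decomposition of $H_\bullet(\mathrm{Conf}_r(\R^m))$; alternatively one carries out the full combinatorial computation of $H(\Graphs_m(r))$ after discarding low-valence internal vertices. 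Without one of these arguments your proof establishes injectivity on cohomology but not the quasi-isomorphism.
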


In a similar manner we may define the twisted operads $\Tw\dGra_m$ and $\Tw \dGraor_m$.

\subsection{The hairy graph complexes, and the relation to the Hochschild complex of $\Poiss_n$.}\label{sec:hairydef}
Let us briefly recall here the definition of the hairy graph complexes, see \cite{TW} for a more detailed account.
We consider the operadic deformation complexes (dg Lie algebras)
\[
\Def(\hoPoiss_{m}\stackrel{*}{\to} \Graphs_n)
\]
where the map $*$ is the composition 
\[
\hoPoiss_m \to \Poiss_m \to \Com \to \Poiss_n \to  \Graphs_n .
\]
Concretely, the underlying vector space of the deformation complex above has the form 
\[
\prod_r \Hom_{S_r}(\Pois_m^\vee(r), \Graphs_n(r))
\cong \prod_r \Pois_m\{m\}(r),\otimes_{S_r} \Graphs_n(r).
\]
It contains a natural subcomplex
\[
\fHGC_{m,n} \subset \Def(\hoPoiss_{m}\stackrel{*}{\to} \Graphs_n),
\]
the hairy graph complex, consisting of series in
\[
  \prod_r \Com_m\{m\}(r),\otimes_{S_r} \Graphs_n'(r) \subset  \prod_r \Pois_m\{m\}(r),\otimes_{S_r} \Graphs_n(r),
\]
where $\Graphs_n'(r)\subset \Graphs_n(r)$ is the subcomplex consisting of graphs all of whose external vertices have valence exactly one.
Concretely, elements of $\fHGC_{m,n}$ may be depicted as graphs with external legs or hairs as shown in \eqref{equ:HGCsample}.
The important result is the following:
\begin{thm}[\cite{Turchin1}]
The inclusion $\fHGC_{m,n} \subset \Def(\hoPoiss_{m}\stackrel{*}{\to} \Graphs_n)$ is a quasi-isomorphism of dg Lie algebras.
\end{thm}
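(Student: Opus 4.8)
The plan is to present $\Def(\hoPoiss_m\stackrel{*}{\to}\Graphs_n)$ as its convolution dg Lie algebra $\prod_r\Hom_{S_r}(\Poiss_m^\vee(r),\Graphs_n(r))$ and to prove that the hairy corner $\fHGC_{m,n}$ carries all of the homology. First I would separate the differential into the internal part $d_\Graphs$ inherited from $\Graphs_n$ (edge contraction and vertex splitting, including the splitting of a numbered vertex off a new internal vertex) and the twisting part $d_*=[*,-]$. The key structural input is that the Maurer--Cartan element $*$ factors through $\Com$: in the convolution algebra it is supported entirely on the \emph{cocommutative} cogenerators of $\Poiss_m^\vee$ and sends them to disjoint unions of numbered vertices, so that $d_*$ never produces the ``edge between two numbered vertices'' that would come from a bracket. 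Dually I would use the PBW/Koszul description of the cooperad $\Poiss_m^\vee$: a basis cooperation is a cocommutative coproduct of co-Lie words on the numbered vertices, and under $[-,-]\mapsto\text{edge}$ the co-Lie words of length $\geq 2$ are precisely what forces edges incident to numbered vertices. With these two dictionaries in hand, $\fHGC_{m,n}$ is exactly the locus where (i) every co-Lie word has length one, i.e.\ the source reduces to $\Com_m\{m\}$, and simultaneously (ii) every numbered vertex is univalent, i.e.\ the target lies in $\Graphs_n'$.

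Next I would equip the complex with an exhaustive filtration measuring the failure of being hairy -- for instance by the total valence of the numbered vertices, or by the PBW weight counting the lengths of the co-Lie words, or a suitable combination. Because the underlying object is a \emph{product} $\prod_r$ rather than a sum, the first thing to verify is that the filtration is complete and the associated spectral sequence converges; this is a standard but necessary bookkeeping step. On the associated graded the twisting $d_*$ and the ``internal'' moves among the black vertices drop filtration and vanish, so that the induced differential redistributes edges at a fixed numbered vertex and pairs these moves against the co-Lie cobracket of $\Poiss_m^\vee$. The entire content of the theorem is then the acyclicity of this residual differential away from the univalent, cocommutative corner.

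The technical heart -- and the step I expect to be the main obstacle -- is exactly this acyclicity. I would prove it by recognizing the residual complex, for each fixed graph on the black vertices, as a tensor product over the numbered vertices of two manifestly acyclic pieces. A numbered vertex of valence $v$ can be repeatedly split off a new internal vertex by $d_\Graphs$; the resulting ``how the $v$ edges distribute'' complex is of reduced bar type and is acyclic except at $v=1$, which yields reduction (ii) to $\Graphs_n'$. Simultaneously, the pairing of the co-Lie cobracket on the source against this splitting realizes a shift of the Koszul complex witnessing the Koszul duality between $\Com$ and $\Lie$, which is acyclic in arities $\geq 2$ and concentrated in the length-one cocommutative part, yielding reduction (i). Matching these two resolutions -- so that they assemble into a single acyclic complex rather than interfering -- and tracking the signs and $S_r$-(anti)invariance throughout is where the real work lies; I would try to arrange a single filtration for which the two resolutions decouple on the associated graded. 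Granting the acyclicity, the spectral sequence collapses onto $\fHGC_{m,n}$ and the inclusion is a quasi-isomorphism of complexes. Finally, since the conditions cutting out $\fHGC_{m,n}$ are preserved by the convolution Lie bracket, $\fHGC_{m,n}$ is a dg Lie subalgebra, and a quasi-isomorphism of subcomplexes between dg Lie algebras is automatically a quasi-isomorphism of dg Lie algebras.
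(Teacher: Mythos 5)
The paper offers no proof of this statement: it is imported as a black box from the cited reference (\cite{Turchin1} for $m=1$; the higher~$m$ cases are due to Arone--Turchin), so there is no in-paper argument to compare yours against. Measured against the proof in the cited literature, your outline is the right one: split the differential into the internal differential of $\Graphs_n$ and the twisting by the Maurer--Cartan element, note that the latter factors through $\Com$ and hence never creates edges between external vertices, use the decomposition $\Poiss_m^\vee\cong\Com^\vee\circ\Lie^\vee\{\cdot\}$ to exhibit $\fHGC_{m,n}$ as the ``length-one co-Lie words, univalent external vertices'' corner, and kill the complement by a filtration whose associated graded factors, external vertex by external vertex, into a reduced-bar-type complex and a $\Com$--$\Lie$ Koszul complex. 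That is essentially how the quasi-isomorphism is established in the source.

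Two places where your plan is genuinely under-determined and where the cited proof does the actual work. First, the filtration: the part of the $\Graphs_n$-differential that splits an external vertex off a new internal vertex (or contracts such an edge) does not interact monotonically with the total valence of the external vertices --- splitting a vertex of valence $v$ yields external valences anywhere between $1$ and $v+1$ --- so ``total valence'' alone is not a filtration, and arranging a grading for which the two acyclicity mechanisms decouple on the associated graded is precisely the content of the theorem rather than a formality. Second, convergence: the complex is a product over arities of infinite-dimensional spaces, so completeness of the filtration must be argued; the standard remedy is that the differential preserves the loop order of the underlying graph, and in fixed loop order, arity and cohomological degree only finitely many graphs occur, so the spectral sequence converges summand by summand. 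A smaller point: your closing claim that $\fHGC_{m,n}$ is closed under the convolution bracket is not completely free either --- inserting a graph into a univalent external vertex reconnects the incident hair to \emph{all} vertices of the inserted graph, including its external ones, and one must check that the terms producing bivalent external vertices cancel (they do, using the $\{m\}$-twisted symmetry of the $\Com$-factor), before concluding that a quasi-isomorphism of complexes between dg Lie algebras is a quasi-isomorphism of dg Lie algebras.
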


Concretely, the Lie bracket on hairy graphs is combinatorially given by attaching a hair of one graph to a vertex of the other, as schematically depicted here.
\[
\left[ 
\begin{tikzpicture}[baseline=-.8ex]
\node[draw,circle] (v) at (0,.3) {$\Gamma$};
\draw (v) edge +(-.5,-.7) edge +(0,-.7) edge +(.5,-.7);
\end{tikzpicture}
,
\begin{tikzpicture}[baseline=-.65ex]
\node[draw,circle] (v) at (0,.3) {$\Gamma'$};
\draw (v) edge +(-.5,-.7) edge +(0,-.7) edge +(.5,-.7);
\end{tikzpicture}
\right]
=
\sum
\begin{tikzpicture}[baseline=-.8ex]
\node[draw,circle] (v) at (0,1) {$\Gamma$};
\node[draw,circle] (w) at (.8,.3) {$\Gamma'$};
\draw (v) edge +(-.5,-.7) edge +(0,-.7) edge (w);
\draw (w) edge +(-.5,-.7) edge +(0,-.7) edge +(.5,-.7);
\end{tikzpicture}
\pm 
\sum
\begin{tikzpicture}[baseline=-.8ex]
\node[draw,circle] (v) at (0,1) {$\Gamma'$};
\node[draw,circle] (w) at (.8,.3) {$\Gamma$};
\draw (v) edge +(-.5,-.7) edge +(0,-.7) edge (w);
\draw (w) edge +(-.5,-.7) edge +(0,-.7) edge +(.5,-.7);
\end{tikzpicture}\, .
\]

For us, one important fact is that the hairy graph complexes $\fHGC_{m,n}$ also compute the Hochschild homology of $\Poiss_n$, namely one has the following result.
\begin{thm}[\cite{Turchin1}]\label{thm:graphsHKR}
The symmetrization maps $\Com(r)\to \Ass(r)$ induce a quasi-isomorphism of complexes (but not of Lie algebras)
\[
\Phi : \fHGC_{1,n} \to  CH(\Graphs_n).
\]
\end{thm}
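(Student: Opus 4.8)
The plan is to realize $CH(\Graphs_n)$ as an operadic deformation complex and then prove the quasi-isomorphism by a filtration argument whose leading page is an $\mathrm{HKR}$-type computation. First I would record the standard identification of the Hochschild complex of a multiplicative operad with a deformation complex: for any operad $\op P$ with associative product $m\in\op P(2)$, the Koszul self-duality of $\Ass$ (so that $\Ass^\vee(r)$ is, up to degree shift and sign, the regular representation of $S_r$) gives $\Hom_{S_r}(\Ass^\vee(r),\op P(r))\cong\op P(r)[r]$, identifying the convolution differential $[m,-]$ with the Hochschild differential $\sum_i\pm d_i$. Since the minimal resolution of $\Poiss_1=\Ass$ is $\hoPoiss_1$ and the product $m$ used in $CH(\Graphs_n)$ is exactly the image of the generator under $*\colon\hoPoiss_1\to\Poiss_1\to\Com\to\Poiss_n\to\Graphs_n$, this yields an isomorphism of complexes
\[
CH(\Graphs_n)\cong\Def(\hoPoiss_1\stackrel{*}{\to}\Graphs_n),
\]
and by the preceding theorem (for $m=1$) the inclusion $\fHGC_{1,n}\hookrightarrow\Def(\hoPoiss_1\stackrel{*}{\to}\Graphs_n)$ is a quasi-isomorphism.

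Next I would emphasize that $\Phi$ is a \emph{different} chain map from this inclusion. The inclusion is built from the embedding of the cocommutative cogenerators into the coassociative ones — a genuine morphism of cooperads — together with $\Graphs_n'\hookrightarrow\Graphs_n$, and therefore respects the convolution (Gerstenhaber) bracket, consistently with the preceding theorem being an equivalence of dg Lie algebras. By contrast $\Phi$ is induced by the symmetrization $\Com(r)\to\Ass(r)$, sending a hairy graph with $r$ unordered univalent legs to the sum over all $r!$ orderings of the corresponding graph in $\Graphs_n(r)$; this is only a morphism of $S$-modules, and it does not commute with operadic insertion. This is precisely the source of the clause ``but not of Lie algebras'': the two brackets are defined from operadic composition, which symmetrization destroys.

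To prove that $\Phi$ is nonetheless a quasi-isomorphism of complexes I would argue directly by a filtration. The Hochschild differential on $CH(\Graphs_n)$ only splits external vertices ($x\circ_i m$) and adjoins isolated external vertices ($m\circ x$); it leaves the internal subgraph untouched, whereas the internal Kontsevich differential of $\Graphs_n$ changes the number of internal vertices. Filtering by the number of internal vertices, the map $\Phi$ is filtered (symmetrization reshuffles only external legs), and the associated graded differential is the pure ``external'' Hochschild differential. The $E_1$-term is then computed by a Hochschild--Kostant--Rosenberg / Hodge-type argument: for a fixed internal skeleton the external legs assemble into a reduced (co)bar-type complex, and the Koszulness of $\Com$ (implemented by the Eulerian idempotents) shows that its homology is concentrated on the configurations in which every external vertex is univalent, i.e. on $\Graphs_n'$ with its legs symmetrized. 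This identifies $E_1$ with the associated graded of $\fHGC_{1,n}$, on which $\Phi$ acts as the symmetrizer isomorphism; since $\Phi$ is then an isomorphism on $E_1$, it is a quasi-isomorphism.

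The hard part will be this $E_1$ computation: one must show that the external splitting-and-adjoining differential is acyclic away from the univalent configurations and that the surviving homology is exactly the symmetrized univalent graphs. Concretely, for each internal graph together with a collection of half-edges to be capped by external legs, the induced complex should be the reduced bar complex of a cofree cocommutative coalgebra, hence with homology only in the cocommutative (univalent, symmetric) part — a Poincaré-lemma statement governed by the $\Com$--$\Lie$ Koszul duality. Once this acyclicity and the identification of $E_1$ with $\fHGC_{1,n}$ are in place, one must also check convergence of the filtration (which needs care, since the complexes are infinite products); alternatively, having matched $E_1$ with the hairy complex, one may simply invoke the preceding theorem to conclude that $\Phi$ induces an isomorphism on homology.
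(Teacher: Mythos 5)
This theorem is quoted from \cite{Turchin1} and the paper gives no proof of it, so there is no in-paper argument to compare against; I can only assess your proposal on its own terms. Your core strategy --- filter $CH(\Graphs_n)$ by the number of internal vertices, observe that the Hochschild differential preserves that number while the internal (vertex-splitting) differential raises it, and compute the $E_1$ page by a Hochschild--Kostant--Rosenberg argument that kills everything except the graphs with univalent external vertices, antisymmetrized --- is the right one, and is essentially how this result is established in the literature (Turchin packages the $E_1$ step as the Hodge decomposition of the Hochschild complex of a multiplicative operad whose multiplication factors through $\Com$, using the Eulerian idempotents; the underlying input, the $\Com$--$\Lie$ Koszul duality, is the same). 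You also correctly identify why $\Phi$ is a chain map but not a Lie map, and correctly flag the convergence issue for the spectral sequence of an infinite product.

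Two points need repair. First, your opening identification is wrong: $\Poiss_1$ is \emph{not} $\Ass$ --- this is exactly why the paper sets $\e_1=\Ass$ separately from $\Poiss_n$ for $n\geq 2$. The operads $\Ass(r)$ and $\Poiss_1(r)$ agree as $S_r$-modules (both regular), so the underlying graded spaces of $CH(\Graphs_n)$ and $\Def(\hoPoiss_1\stackrel{*}{\to}\Graphs_n)$ match, but the convolution differentials come from different cooperad structures and do not obviously agree; relating the associative deformation complex to the commutative/Poisson one is essentially the content of the theorem being proved, so asserting the isomorphism at the outset is circular. In particular the escape route in your last sentence (``simply invoke the preceding theorem'') is not available: even granting an abstract equivalence of homologies, that would not show that the specific map $\Phi$ induces it. Second, the $E_1$ computation is the entire mathematical content of the statement and is only sketched; you name the correct target (acyclicity away from univalent external configurations, governed by the Koszulness of $\Com$), but one must actually write down, for each fixed internal skeleton with its set of half-edges and external--external edges, the resulting complex and verify it is a (completed) Koszul-type complex for the free commutative algebra, keeping track of the sign representation that produces the $\Com\{1\}(r)\otimes_{S_r}(-)$ coinvariants of $\fHGC_{1,n}$. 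With that lemma proved and the convergence of the filtration checked, your argument closes.
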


This morphism is formally similar to the Hochschild-Kostant-Rosenberg map, and hence we will refer to it as the hairy Hochschild-Kostant-Rosenberg morphism.

\subsection{Stable formality morphisms}
Let $\ELie$ be the 2-colored operad governing two $\hoLie_2$ algebras and an $\infty$-morphism between them.
We define a stable formality morphism (cf. \cite{WillInfty,Dol}) to a colored operad map 
\[
\ELie \to \bpm \Br & \KGra & \vGra_2 \epm
\]
extending the usual maps $\hoLie_2\to \Br$ and $\hoLie_2\to \vGra_2$, and such that the induced map of complexes $\Tpoly\to \Dpoly$ coincides with the Hochschild-Kostant-Rosenberg quasi-isomorphism. Concretely, the latter part of the condition means that the linear component of the $L_\infty$ morphism is given by the series of graphs
\begin{equation}\label{equ:HKRelement}
\sum_{k\geq 0}
\frac 1 {k!}
\underbrace{
\begin{tikzpicture}
\draw (-1,0)--(1,0);
\node[ext] (v) at (0,1) {}; 
\node[int] (w1) at (-.8,0) {};
\node[int] (w2) at (-.6,0) {};
\node (wx) at (0,.4) {$ \cdots$};
\node[int] (w3) at (.6,0) {};
\node[int] (w4) at (.8,0) {};
\draw[-latex] (v) edge (w1) edge (w2) edge (w3) edge (w4);
\end{tikzpicture}
}_{k\times}.
\end{equation}

Similarly, an oriented stable formality morphism is a colored operad map
\[
\ELie \to \bpm \Br & \KGraor & \vGraor_2 \epm
\]
extending the usual map $\hoLie_1\to \Br$ (but not $\hoLie_1\to \vGraor_2$), and such that the induced $L_\infty$ map $\Tpoly^{\geq 1}\to \Dpoly^{geq 1}$ coincides with the Hochschild-Kostant-Rosenberg quasi-isomorphism. Concretely, this means that the linear piece of the $L_\infty$ map is given by the following series of graphs.
\begin{equation}\label{equ:HKRelementor}
\sum_{k\geq 1}
\frac 1 {k!}
\underbrace{
\begin{tikzpicture}
\draw (-1,0)--(1,0);
\node[ext] (v) at (0,1) {}; 
\node[int] (w1) at (-.8,0) {};
\node[int] (w2) at (-.6,0) {};
\node (wx) at (0,.4) {$ \cdots$};
\node[int] (w3) at (.6,0) {};
\node[int] (w4) at (.8,0) {};
\draw[-latex] (v) edge (w1) edge (w2) edge (w3) edge (w4);
\end{tikzpicture}
}_{k\times}.
\end{equation}
Note that now the summation starts at $k=1$, since graphs with vertices without outgoing edges are considered zero in $\KGraor$.

Constructing such formality morphisms is not easy.
We shall recall in section \ref{sec:defq} below Kontsevich's construction of a stable formality morphism, and constructions of Shoikhet and the author of an oriented one. Note that part of the data of an oriented formality morphism is a non-standard $L_\infty$ structure on the space of multivector fields.

We will also consider an extension taking into account the full $E_2$ structures. 
The two relevant models of the $E_2$ operad we will be using are the braces operad $\Br$, and the minimal resolution $\hoe_2$ of the operad $\e_2=\Poiss_2$.

To this end define $\EBr$ as the 2-colored operad governing two $\Br_\infty$ algebras and an $\infty$ morphism between them, and similarly $\EGer$ as the two colored operad governing two $\hoe_2$ algebras and an $\infty$-morphism between them. Then a 
stable braces formality morphisms (respectively oriented stable braces formality morphisms) is a colored operad map 
\[
\EBr \to \bpm \Br & \KGra & \vGra_2 \epm
\]
or respectively,
\[
\EBr \to \bpm \Br & \KGraor & \vGraor_2 \epm
\]
extending the natural map $\Br_\infty\to \Br$, and inducing stable formality morphism (resp. oriented stabile formality morpshisms) through the restriction $\Lie_1\to \Br$. Similarly we define stable (oriented) $\hoe_2$-formality morphisms by replacing $\EBr$ by $\EGer$ above. We note that is essentially irrelevant whether to consider homotopy braces or $\hoe_2$ formality morphisms, as one can always compose with (suitable) quasi-isomorphisms
\[
 \EGer \to \EBr\to \EGer
\]
to pass from one type to another.

\begin{rem}
One may also use a more restrictive notion of stable formality morphism re requiring that the colored operad maps above factor through one of the smaller colored operads
\[
 \bpm \Br & \KGra & \Gra_2 \epm \subset  \bpm \Br & \KGra & \dGra_2 \epm \subset  \bpm \Br & \KGra & \vGra_2 \epm
\]
as was for example done originally in \cite{Dol}. In this paper we generally stick to the laxer version above, as the refinement would only introduce spurious additional technical difficulties.
\end{rem}

\subsection{Twistable stable formality morphisms}\label{sec:twistable}
There is also a colored version of the operadic twisting operation, cf. \cite{WillInfty} or the Appendix in \cite{ricardoBr}. In our case, given a stable formality morphism
\[
\ELie\to \bpm \Br & \KGra & \vGra_2 \epm
\]
it produces for us a $\Br$-$\Tw\vGra_2$-operadic bimodule $\Tw\KGra$ together with a map 
\[
\ELie\to \bpm \Br & \Tw\KGra & \Tw\vGra_2 \epm.
\]
These data have the following property: Suppose $\bpm \Br & \KGra & \vGra_2 \epm$ acts on a colored vector space $V\oplus W$, and suppose that $m\in W$ is a Maurer-Cartan element. (With respect to the $\hoLie_1$ structure on $W$ induced by the given map from $\ELie$.) Then the twisted action
\[
\ELie \to \End(V^{f(m)}\oplus W^m),
\]
where $f(m)$ is the image in $W$ of the Maurer-Cartan element $m$, factors through an action
\[
 \bpm \Br & \Tw\KGra & \Tw\dGra_2 \epm \to \End(V^{f(m)}\oplus W^m).
\]

In general, we will call a twistable stable formality morphism a map 
\[
\ELie \to \bpm \Br & \Tw\KGra & \Tw\vGra_2 \epm
\]
such that the composition 
\[
\ELie \to \bpm \Br & \Tw\KGra & \Tw\vGra_2 \epm \to \bpm \Br & \KGra & \vGra_2 \epm
\]
is a stable formality morphism.
Similarly, we define the notion of stable twistable braces formality morphism a map 
\[
\EBr \to \bpm \Br & \Tw\KGra & \Tw\dGra_2 \epm
\]
that yields a stable braces formality morphism after projection to $ \bpm \Br & \KGra & \dGra_2 \epm$. Analogously, we define the notion of twistable $\hoe_2$ formality map.
Yet analogously, we define the notion of twistable stable oriented (braces or $\hoe_2$) formality morphism, by replacing the graph operads on the right-hand side by their acyclic versions.

We note that twistable stable formality morphisms can be obtained from ordinary ones. We consider only the oriented case, and $\hoe_2$ formality morphisms, as this will be used below.
The twisting functor is a comonad \cite{DolWill}, and a co-algebra over this comonad we call a natively twistable operad. This means in particular, that such an operad $\op P$ comes equipped with a map 
\[
 \op P\to \Tw\op P
\]
right inverse to the canonical projection $\Tw \op P\to \op P$. For example, $\hoe_2$ and $\Br$ are natively twistable.
As shown in \cite{DolWill} we may choose a quasi-isomorphism $\hoe_2\to \Br$ compatible with the $\Tw$-coalgebra structure, i.e., such that the following diagram commutes
\begin{equation}\label{equ:Twfactorizing}
 \begin{tikzcd}
  \hoe_2 \ar{r} \ar{d}& \Br\ar{d} \\
  \Tw\hoe_2 \ar{r} & \Tw\Br 
 \end{tikzcd}\ .
\end{equation}
Suppose now we can extend the map above to a stable formality morphism
\[
 \EGer \to \bpm \Br & \KGraor & \vGra_2 \epm.
\]
Applying the twisting functor, and using that $\EGer$ is natively twistable (in the colored setting) we obtain 
the composition
\[
 \EGer \to   \bpm \Tw\Br & \Tw\KGraor & \Tw\vGra_2 \epm.
\]
Now, due to \eqref{equ:Twfactorizing} the morphism in color one (i.e., $\hoe_2\to \Br$) factors through $\Br$, and we thus obtain the desired twistable formality morphism
\[
 \EGer \to   \bpm \Br & \Tw\KGraor & \Tw\vGra_2 \epm.
\]

\begin{rem}
 Again, one may define various more restrictive versions of twistable stable formality morphisms by requiring that they factorize through suboperads 
 \[
 \bpm \Br & \Tw\KGra & \Graphs_2 \epm \subset \bpm \Br & \Tw\KGra & \Tw\Gra_2 \epm \subset \bpm \Br & \Tw\KGra & \Tw\dGra_2 \epm \subset \bpm \Br & \Tw\KGra & \Tw\vGra_2 \epm .
 \]
 We shall stick to the laxer definition, though some of the formality morphisms we construct below will satisfy the more restrictive definition.
\end{rem}

\section{Deformation quantization}\label{sec:defq}
In this section we give brief recollections of the existing constructions of stable formality morphisms, and slight variations thereof.
This section can be considered as background information for the reader's convenience. It can be skipped provided the reader accepts the following Theorem.

\begin{thm}\label{thm:stable formality existence}
There exist stable formality morphisms and oriented stable formality morphisms over the ground fields $\K=\R$ and $\K=\Q$.
There exist stable braces or $\hoe_2$ formality morphisms over the ground field $\K=\R$. 
There exist oriented stable braces or $\hoe_2$ formality morphisms over the ground fields $\K=\R$ and $\K=\Q$.
\end{thm}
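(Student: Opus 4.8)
The plan is to assemble the four families of maps from the known \emph{real} constructions at the $\hoLie$-level, and then to obtain the remaining rational and $E_2$-refined cases by obstruction theory driven by the graph-cohomology computations of Section~\ref{sec:graphs}. \emph{Real $L_\infty$-level.} First I would recall Kontsevich's construction \cite{K1,K2}: assigning to each Kontsevich graph its configuration-space integral weight defines a colored operad map
\[
\ELie \to \bpm \Br & \KGra & \vGra_2 \epm
\]
over $\K=\R$ whose restrictions to the two colors are the prescribed maps into $\Br$ and into $\vGra_2$, and whose linear component is the Hochschild--Kostant--Rosenberg series \eqref{equ:HKRelement}; this settles the real non-oriented case. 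The oriented real case is Shoikhet's construction \cite{Shoikhet}, obtained from the same formalism with propagators adapted to the acyclic setting, yielding
\[
\ELie \to \bpm \Br & \KGraor & \vGraor_2 \epm
\]
with linear component \eqref{equ:HKRelementor}; here the induced structure on $\Tpoly^{\geq 1}$ is the non-standard Shoikhet $L_\infty$ structure (the element $m_{trans}$), which is exactly what the definition of an oriented morphism permits.

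\emph{Rational $L_\infty$-level.} To descend to $\K=\Q$ I would replace integrals by obstruction theory. Fixing the boundary data --- the restrictions to the two colors together with the prescribed linear part, all defined over $\Q$ --- the remaining components of the desired map form a Maurer--Cartan element in a pronilpotent dg Lie algebra, namely the deformation complex of the partial map. Its cohomology is accessible through the graph complexes of Section~\ref{sec:graphs}: the non-oriented case is governed by the Kontsevich graph complex $\GC_2$ (recall $H^0(\GC_2)=\grt$ governs only the symmetries, not the obstruction to existence), and the oriented case by $H(\GCor_2)\cong H(\GC_1)$ of Theorem~\ref{thm:GCorGC}, whose Maurer--Cartan part is pinned down by Theorem~\ref{thm:oriented MC} and Corollary~\ref{cor:Lieunique}, with the relevant class admitting the manifestly rational representative \eqref{equ:GCorclass}. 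Since the obstruction groups vanish in the relevant degrees, at each stage of the tower of square-zero extensions the set of lifts is a torsor under a $\Q$-vector space; choosing rational lifts step by step produces a morphism defined over $\Q$. This yields the rational non-oriented and oriented $L_\infty$ statements.

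\emph{Braces and $\hoe_2$ upgrade.} For the real non-oriented braces (equivalently $\hoe_2$) statement I would invoke Tamarkin's construction, which produces an $E_2$-formality of $\Dpoly$ directly from the formality of the chains of the little $2$-disks operad together with a Drinfeld associator; in the operadic language of \cite{WillInfty,Dol} this is precisely a map
\[
\EGer \to \bpm \Br & \KGra & \vGra_2 \epm
\]
over $\R$, and one passes freely between the $\hoe_2$ and braces versions along the quasi-isomorphisms $\EGer \to \EBr \to \EGer$. For the oriented braces statement over both $\R$ and $\Q$ I would promote the oriented $L_\infty$ morphism of the first step to an $\hoe_2$ morphism by the same obstruction scheme, following \cite{WillInfty}: the obstructions to lifting the color-one and interpolating $\hoLie$-structures to $\hoe_2$ lie in higher oriented graph-cohomology groups, and the rigidity recorded in Corollary~\ref{cor:Lieunique} forces these to vanish \emph{rationally}, so the extension exists already over $\Q$.

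\emph{Main obstacle.} The weight of the argument rests on the obstruction-vanishing computations, and the delicate point is the asymmetry between the two settings. In the oriented case the controlling cohomology is rigid enough --- only the one-parameter family of Theorem~\ref{thm:oriented MC} --- that the $E_2$-extension obstructions can be killed over $\Q$. In the non-oriented case the full $E_2$-structure is what Kontsevich's explicit real integrals (or Tamarkin's associator) deliver directly in the required normalization, whereas the rational refinement of the braces level would require controlling the associator-dependence of the lift, which is not pursued here; this is why the braces statement is asserted only over $\R$. Pinning down the exact degrees in which the obstruction groups vanish, and checking that the normalizations \eqref{equ:HKRelement} and \eqref{equ:HKRelementor} survive all the lifts, is the technical heart I expect to occupy most of the work.
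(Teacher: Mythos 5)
There is a genuine gap in your rational non-oriented case. You propose to descend Kontsevich's morphism to $\Q$ by obstruction theory, asserting that ``the obstruction groups vanish in the relevant degrees.'' But the obstruction group controlling existence of (non-oriented) stable formality morphisms is $H^1(\fGC_2)$, and as the paper recalls, whether $H^1(\fGC_2)=0$ is a long-standing open problem --- this is precisely why Kontsevich could not prove his formality conjecture obstruction-theoretically and had to resort to transcendental integrals. The rational statement is therefore not a naive lifting argument: the paper obtains it by citing Dolgushev's theorem that all coefficients in Kontsevich's quasi-isomorphism can be replaced by rationals, which rests on the Grothendieck--Teichm\"uller torsor structure on the set of homotopy classes of stable formality morphisms, not on vanishing of obstructions. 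Your parenthetical that $H^0(\GC_2)=\grt$ ``governs only the symmetries'' is correct but does not rescue the argument, since it is $H^1$ that must be controlled and it is not known to vanish. (For the \emph{oriented} case your obstruction-theoretic reasoning is sound and matches the paper: there $H^1(\GCor_2)\cong H^1(\GC_1)$ is one-dimensional and the single class is absorbed into the freedom of deforming the $\hoLie_2$-structure on the $\Tpoly$ side, which the definition of an oriented morphism permits.)

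Two further points diverge from the paper in ways that matter. First, for the real stable braces morphism you invoke Tamarkin; but Tamarkin's construction is not known to be \emph{stable}, i.e., to factor through the graph operads --- the paper explicitly remarks that no stable version of Tamarkin's argument is written down, and instead uses the configuration-space-integral construction of \cite{WillInfty} (the two-colored operad $T$ of compactified configuration spaces), which is why the non-oriented braces statement is only asserted over $\R$. Second, for the oriented $\hoe_2$ morphism over $\Q$ you again appeal to obstruction theory, claiming Corollary \ref{cor:Lieunique} kills the relevant obstructions; but that corollary only rigidifies the $\hoLie_2$-level, not the full $\hoe_2$-extension. The paper's route is different and more concrete: it performs an explicit graphical homotopy transfer of the $\hoe_2$-structure from $\Dpoly$ to $\Tpoly$ along the HKR map, using a rational projection and a rational homotopy (extracted from Vergne) that only reorders derivatives, observes that every transferred operation is encoded by a graph in $\vGraor$ or $\KGraor$, and then uses Corollary \ref{cor:Lieunique} merely to normalize the resulting $\hoLie_2$-structure to the Shoikhet one. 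This transfer argument is what makes the oriented $E_2$-case genuinely easier than the non-oriented one, a distinction your proposal does not capture.
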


The Theorem is a collection of results in the literature about deformation quantization. The remainder of this section contains recollections of the constructions from the literature, with one slight extension, for the readers convenience. 

\begin{rem}
Note that Theorem \ref{thm:stable formality existence} does not state the existence of a stable $E_2$ formality morphism over $\K=\Q$. There is no doubt that such morphisms can be constructed by the standard methods existing in the literature. In particular in the unstable setting Tamarkin \cite{Tamanother} gives a well known construction. However, the similar construction for the stable case is apparently not written down in the literature, and we will not go through the effort here since we do not use this case.
\end{rem}

\subsection{Recollection: The Kontsevich formality theorem}
The Kontsevich formality Theorem \cite{K1} states that there is an $L_\infty$ quasi-isomorphism 
\[
\mU: \Tpoly[1] \to \Dpoly[1]
\]
between the Lie algebra of multivector fields on $\R^d$ and the Lie algebra of multi-differential operators. 
In fact, Kontsevich's morphism can be written in the form
\begin{equation}\label{equ:Kontsevich formality}
 \mU_k(\gamma_1,\dots,\gamma_k) = \sum_\Gamma c_\Gamma D_\Gamma(\gamma_1,\dots,\gamma_k)
\end{equation}
where the sum runs over \emph{Kontsevich graphs}, $c_\Gamma$ is a constant and $D_\Gamma(\cdots)$ is a certain multidifferential operator. The numbers $c_\Gamma$ can be computed through configuration space integrals
\begin{equation}\label{equ:cGamma}
c_\Gamma = \int_{C_{v_I(\Gamma), v_{II}(\Gamma)}} \omega_\Gamma,
\end{equation}
where $C_{p,q}$ is (essentially) the space of configurations of $p$ points in the upper halfplane and $q$ on the real line and $v_{I,II}(\Gamma)$ are the numbers of type I and type II vertices in $\Gamma$. The differential form $\omega_\Gamma$ is defined as a product over 1-forms, one for each edge
\begin{equation}\label{equ:omegadef}
\omega_\Gamma = \bigwedge_{(ij)\text{ edge}} \frac 1 {2pi} d\phi(z_i, z_j)
\end{equation}
where $\phi_{z_i,z_j}$ is the hyperbolic angle between $\infty$ and $z_j$, as measured at $z_i$, see the following picture.
\[
\begin{tikzpicture}[scale=.7]
\draw (-3,0)--(3,0);
\node[int, label=130:{$\phi$}, label=30:{$z_i$}] (v1) at (1,3) {};
\node[int, label=180:{$z_j$}] (v2) at (-2,1) {};
\draw[dashed] (v1)+(0,-.5) -- +(0,3);
\draw (v1)+(80:.8) arc (80:195:.8);
\clip (-5,-.5) rectangle (2.5,4);
\node [draw, dashed] at (.84,0) [circle through={(v1)}] {};
\draw[-triangle 60] (v1)--(v2);
\end{tikzpicture}
\]
The Kontsevich construction determines, in fact, a stable formality morphism, 
\[
\mU = \sum_\Gamma c_\Gamma \Gamma .
\]
The formality morphism is, however, not oriented, as graphs $\Gamma$ with directed cycles are not necessarily associated zero weight $c_\Gamma$. 

A rational stable formality morphism has been constructed in \cite{Dolrational}, using Kontsevich's result.

\subsection{Recollection: The Shoikhet formality theorem}
A definition of a stable oriented formality morphism was given by B. Shoikhet \cite{Shoikhet}.
The construction of the morphism is almost identical to Kontsevich's. The only difference is that in the definition of the integrand \eqref{equ:omegadef}, one replaces $ d\phi(z_i, z_j)$ by 
\begin{equation}\label{equ:dphitrunc}
d\phi^{trunc}(z_i, z_j)=
\begin{cases}
2d\phi(z_i, z_j) & \text{if $\mathrm{Im}( z_i) > \mathrm{Im}( z_j)$} \\
0 & \text{otherwise}.
\end{cases}
\end{equation}
This differential form is supported on configurations in which $z_i$ is above $z_j$. Because of this feature graphs $\Gamma$ with directed cycles are assigned a weight form $\omega_\Gamma=0$, and hence $c_\Gamma=0$, so that the corresponding stable formality morphism is oriented.

However, the "cost" is that (-for reasons we will not describe in detail here-) one has to introduce a nonstandard $L_\infty$ structure on $\Tpoly$. In its stable incarnation this is a non-standard map 
\[
\hoLie_1 \to \dGra_2.
\]
The image of the generator $\mu_n$ is again given by a sum-of graphs formula
\[
\mu_n \mapsto \sum_{\gamma} \tilde c_\gamma \gamma
\]
with $\tilde c_\gamma$ given by a configuration space integral of the form
\[
\tilde c_\gamma = \int_{C_{v(\Gamma)}} \bigwedge_{(ij) \text{ edge}}
\frac{1}{pi} d{\rm arg}^{trunc}(z_i-z_j).
\]
For more details we refer the reader to \cite{Shoikhet}.

\subsection{Recollection: Oriented stable formality morphisms through obstruction theory, and (homotopy) uniqueness}
There is alternative approach to constructing formality morphism. One can use standard obstruction theory and check that obstructions to the existence of stable formality morphisms live in $H^1(\fGC_2)$. In fact, this obstruction theoretic approach was first pursued by Kontsevich \cite{Kconjecture} in attempting to solve his formality conjecture. Unfortunately, the question of whether $H^1(\fGC_2)\stackrel{?}=0$ has been an open problem which resisted the attacks of many researchers over the last decades. In particular, Kontsevich had to resort to the more elaborate (and transcendental) solution of his formality conjecture outlined above.

Fortunately, for stable oriented formality morphisms the situation is slightly different, as has been noted 
in \cite{WillOriented}. 
The obstructions are governed by $H^1(\GCor_2)$. But by Theorem \ref{thm:GCorGC} this space may be identified with $H^1(\GC_2)$, which is a one-dimensional space by essentially trivial degree reasons.
Hence it can obstruction theoretically be shown that oriented stable formality morphism exist, and hence that they in particular exist over $\K=\Q$.
Furthermore, it follows that up to homotopy there is only a one parameter family of stable $L_\infty$ structures, i.e., of maps 
\[
 \hoLie_2\to \dGra_2,
\]
see Theorem \ref{thm:oriented MC} above.
The one parameter is the coefficient of the graphs \eqref{equ:GCorclass}. In particular, it follows that the map $\hoLie_2\to \dGra_2$ constructed by Shoikhet using integrals can be changed to a homotopic rational one.

\subsection{Recollection: The homotopy braces formality theorem}
The $L_\infty$ formality morphisms described above do not take into account the full braces (or, more generally, $E_2$-)structure on the Hochschild complex. A formality morphism taking the $E_2$ structure into account was first constructed by Tamarkin \cite{Tamanother}. However, this construction was not at the ``stable'' level, i.e., factoring through graph operads. 

An extension to a stable (and twistable) homotopy braces morphism has been constructed by the third author in \cite{WillInfty}. 
Cutting the story a bit short, one can build a two colored of configuration spaces 
\[
 T:= \bpm \Br & C(C_{\bullet, 0}) & C(\FM_2) \epm.
\]
Then one can define natural maps 
\begin{equation}\label{equ:Brmorphism}
 \EBr \to T \to \bpm \Br & \KGraphs & \Graphs \epm,
\end{equation}
where $\EBr$ is the two colored operad governing to homotopy braces algebras and an $\infty$-morphism between them.
The right hand map here is defined by configuration space integrals similar to \eqref{equ:cGamma}. 

\subsection{Oriented homotopy braces formality morphism over $\R$}\label{sec:orbrformalityR}
The third author's construction from the previous subsection can be easily modified to yield an oriented twistable stable homotopy braces formality morphism: One merely has to replace the Kontsevich propagator $d\phi$ by its truncated variant \eqref{equ:dphitrunc} considered by Shoikhet. Then, for the same reasons as above, graphs with directed cycles will be associated a zero weight form $\omega_\Gamma$, and will not appear in the image of \eqref{equ:Brmorphism}. Hence we obtain the desired stable braces formality morphism 
\begin{equation}\label{equ:Brmorphism2}
 \EBr \to T \to \bpm \Br & \KGraphsor & \Graphsor \epm.
\end{equation}

We note that since all maps above are fairly explicit, up to the need to compute certain configuration space integrals. 

\subsection{Oriented homotopy $E_2$ formality morphism over $\Q$}
We note that the construction of an oriented stable formality morphism is not "on the same level of difficulty" as the construction of a (non-oriented) stable formality morphism as defined above, despite the fact that the solutions in both cases can be cast in a very similar framework. 
Stable formality morphisms are in general hard to construct and all known constructions use transcendental methods at some point. On the other hand, the existence of stable oriented formality morphisms is just a graphical variant of the homotopy transfer principle. We shall illustrate this in this section and demonstrate the existence of stable oriented $E_2$-formality morphisms over $\K=\Q$.

We will construct an oriented stable $\hoe_2$ formality morphism over $\Q$.
We will in dissect our graphical homotopy transfer argument in two steps for clarity of the exposition. 
\begin{lemma}
There is a morphism of colored operads
\[
\EGer \to \bpm \Br & \KGraor & \vGraor \epm.
\]
such that the linear piece of the morphism agrees with the HKR map.
\end{lemma}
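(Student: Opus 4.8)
The plan is to construct the colored operad map by obstruction theory, exploiting that in the oriented (acyclic) setting every relevant obstruction group is pinned down by ordinary graph cohomology via Theorem \ref{thm:GCorGC}. Since $\EGer$ is quasi-free, giving a map $\EGer \to \bpm \Br & \KGraor & \vGraor \epm$ is the same datum as a Maurer--Cartan element in the convolution dg Lie algebra of the generators of $\EGer$ with values in the target; its components are the two $\hoe_2$-structures (one in each color) together with the color-mixing $\infty$-morphism valued in $\KGraor$. I would fix part of this data from the outset and solve for the rest inductively.

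First I fix the two endpoints. On the $\Dpoly$-color I take the standard natively twistable quasi-isomorphism $\hoe_2 \to \Br$ of \cite{DolWill} (the left map of \eqref{equ:Twfactorizing}). On the $\Tpoly$-color I must produce a map $\hoe_2 \to \vGraor_2$: its underlying $L_\infty$ part is the Shoikhet structure, which exists and is classified by Theorem \ref{thm:oriented MC} and transports into $\vGCor_2$ via Corollary \ref{cor:Lieunique} and Proposition \ref{prop:GChGC}, while the commutative product is represented by the edgeless two-vertex graph. Extending this to a full $\hoe_2$-structure proceeds generator by generator, the obstruction at each step being a degree-$1$ cocycle in $\vGCor_2$.

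With both $\hoe_2$-structures fixed, I build the $\infty$-morphism into $\KGraor$, prescribing its linear piece to be the HKR series \eqref{equ:HKRelementor}. I would first verify that \eqref{equ:HKRelementor} is closed for the differential on $\KGraor$ given by bracketing with the two-point graph, i.e.\ that it intertwines the quadratic parts of the endpoint structures; this is the graphical shadow of the HKR map being a chain map. The higher morphism components are then added inductively, the obstruction at each stage being a cocycle in the deformation complex of the morphism relative to the fixed endpoints.

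In all three inductions the obstruction classes live in the degree-$1$ cohomology of $\vGCor_2$, which by Proposition \ref{prop:GChGC} and Theorem \ref{thm:GCorGC} reduces to $H^1(\GC_2)$, the one-dimensional space spanned (in $\GCor_2$) by \eqref{equ:GCorclass}. The single surviving degree of freedom is exactly the one-parameter freedom of Theorem \ref{thm:oriented MC}, and to see that the remaining obstructions genuinely vanish over $\Q$ I would invoke the real model of Section \ref{sec:orbrformalityR}: the obstruction classes are combinatorially defined over $\Q$, so their vanishing over $\R$, guaranteed by that construction, forces their vanishing over $\Q$. The main obstacle I anticipate is the bookkeeping needed to identify the successive obstruction groups precisely with these pieces of $H(\GCor_2)$; in particular the genuinely delicate direction is the commutative/cup-product part of $\hoe_2$, rather than the Lie part already controlled by Theorem \ref{thm:oriented MC}. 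Once the obstruction groups are located as claimed, existence over $\Q$ follows, and agreement of the linear piece with the HKR map holds by construction.
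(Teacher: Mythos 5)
Your route is genuinely different from the paper's, and it has a gap at its central step. The paper proves this lemma by \emph{homotopy transfer}, with no obstruction theory at all: one picks the projection $\pi:\Dpoly\to\Tpoly$ (restriction to multiderivations followed by antisymmetrization) and a homotopy $h$ that only reorders the derivatives $\partial/\partial x$ making up a multidifferential operator (a formula can be extracted from \cite{vergne_lie}); the transferred $\hoe_2$-structure on $\Tpoly$ and the accompanying $\infty$-morphism are then given by explicit tree-wise compositions of $\pi$, $h$ and the operations on $\Dpoly$, and one simply observes that every operation so produced is universal (built from derivatives and index contractions) and encoded by directed \emph{acyclic} graphs, hence lands in $\vGraor$ and $\KGraor$. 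Nothing needs to vanish; the morphism exists by construction over $\Q$, and only afterwards does the paper invoke Corollary \ref{cor:Lieunique} to replace the transferred $\hoLie_2$-part by the Shoikhet representative (your proposal instead tries to impose the Shoikhet structure from the outset).

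The gap in your argument is the claim that ``in all three inductions the obstruction classes live in the degree-$1$ cohomology of $\vGCor_2$.'' That complex is $\Def(\hoLie_2\to\vGraor_2)$ and controls only deformations of the $\hoLie_2$-part. The obstructions to extending a $\hoLie_2$-map to a full $\hoe_2$-map live in $\Def(\hoe_2\to\vGraor_2)$, and the obstructions for the color-mixing $\infty$-morphism live in a deformation complex built on $\KGraor$; neither of these is $\vGCor_2$, and their degree-one cohomology is not known to be small --- these are precisely hairy-graph-type complexes whose cohomology is the unknown object the whole paper is trying to study. You flag this identification as ``bookkeeping,'' but it is the entire content of the argument, and as stated the identification is false. (There is also a minor indexing slip: by Theorem \ref{thm:GCorGC} one has $H^1(\GCor_2)\cong H^1(\GC_1)$, spanned by the theta graph.) Finally, your fallback of deducing rational obstruction-vanishing from the real construction of Section \ref{sec:orbrformalityR} could only be made rigorous if one also controls degree-zero cohomology of the relevant deformation complexes (so as to compare your rational partial solution with the real one at each stage), and it reintroduces exactly the transcendental input that this subsection of the paper is designed to avoid.
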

\begin{proof}
The morphism can be obtained by homotopy transfer. Consider the HKR map 
\[
\Tpoly(\R^d) \to \Dpoly(\R^d)
\]
for $d$ very large. We want to homotopy transfer the $\hoe_2$ structure on $\Dpoly$ to $\Tpoly$. To this end we have to pick a projection
\[
\pi: \Dpoly\to \Tpoly \subset \Dpoly,
\]
for which we take the projection to differential operators which are derivations in each "slot", followed by antisymmetrization. Furthermore, we have to pick a homotopy
\[
h: \Dpoly\to \Dpoly[1]
\]
such that $dh+hd=1-\pi$. This homotopy may be picked in such a way that it only "reorders" the derivatives $\frac{\partial}{\partial x}$ that make up the differential operator, not affecting the coefficient of the differential operator. A concrete formula can be extracted from \cite{vergne_lie}.

Now the homotopy transferred $\hoe_2$ structure and the $\hoe_2$ ($\infty$-)map $\Tpoly\to \Dpoly$ are constructed from the above data and the $\hoe_2$ algebra structure just by composition, cf. \cite[section 10.3]{LVbook}. It follows in particular, that the operations obtained are all constructed by taking derivatives of coefficient functions and contracting indices accordingly. But those operations are exactly the operations encoded by the operad $\vGraor$ (for the $\hoe_2$ structure on $\Tpoly$) and $\KGraor$ (for the higher homotopies of the map). Hence the Lemma follows.
\end{proof}

Note that by the uniqueness of the $\hoLie_2$ algebra structure (Corollary \ref{cor:Lieunique}) we may always assume that the map $\hoLie_2\to \vGraor$ is the Shoikhet map (or any fixed representative of its homotopy class).
Indeed, if it is not, there is some homotopy, i.e., an $L_\infty$ isomorphism $\phi: \Tpoly^{(1)}\to \Tpoly^{(2)}$ connecting the two $L_\infty$ algebra, which is stable, i.e., expressible only through graphical formulas (operations in $\vGraor$). We may then alter our $\hoe_2$ formality morphism above by postcomposing with the map $\phi$, and equipping $\Tpoly$ with the pushed-forward $\hoe_2$ structure. Note that all operations are still expressed using graphs, so we obtain an altered stable formality morphism 
\[
\EGer \to \bpm \Br & \KGraor & \vGraor \epm,
\]
and now the map $\hoLie_2\to \vGraor$ is the desired one.
Finally, by operadic twisting as described in section \ref{sec:twistable} we may upgrade our formality morphism to a twistable one.

\section{The action on hairy graphs, and proof of the main theorems}\label{sec:action and proofs}
A priori, one may think that the aforementioned formality Theorems in deformation quantization have little do to with the rational homotopy theory of spaces of long knots.
However, note that given the existence of stable (and twistable) oriented formality morphisms, the only missing step in the proofs of Theorems \ref{thm:main} and \ref{thm:brstructure} is in fact, the following assertion.

\begin{prop}\label{prop:graphs act on graphs}
The 2-colored operad
\[
 \bpm \Br & \Tw\KGraor & \Tw\vGraor_2 \epm
\]
acts on the 2-colored dg vector space
\[
CH(\Graphs_{n}) \oplus \fHGC_{1,n}
\]
such that the induced braces structure on $CH(\Graphs_{n})$, and the induced $\Poiss_2$-structure on $\fHGC$ are the standard ones, and such that the HKR element \eqref{equ:HKRelementor} in $\KGraphsor(1)$ induces the standard HKR map from Theorem \ref{thm:graphsHKR}.
\end{prop}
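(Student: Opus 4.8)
The plan is to transport the deformation-quantization action of $\bpm \Br & \KGraor & \vGraor_2 \epm$ on $\Dpoly^{\geq 1}\oplus \Tpoly^{\geq 1}$ to the graphical setting, using the dictionary in which a hair of a hairy graph plays the role of a cotangent (``$p$'') direction, and attaching an edge to a vertex plays the role of an $x$-derivative. One could first construct the untwisted action of $\bpm \Br & \KGraor & \vGraor_2 \epm$ and then upgrade to the twisted operad, but the cleanest route is to define the twisted action directly by \emph{hair reconnection} operations modeled verbatim on \eqref{equ:dGraaction} and the Kontsevich-graph module action.

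In detail, I would define the three structure maps as follows. The left $\Br$-action on $CH(\Graphs_n)$ is taken to be the standard braces action on the Hochschild complex of the multiplicative operad $\Graphs_n$, so this component needs no construction. For a directed acyclic graph $\Gamma\in \Tw\vGraor_2(r)$ and hairy graphs $g_1,\dots,g_r\in\fHGC_{1,n}$, one reads each edge $(i,j)$ of $\Gamma$ as the instruction ``detach a hair of $g_i$ (the analog of $\partial/\partial p$ at the source) and reconnect its free endpoint to a vertex of $g_j$ (the analog of $\partial/\partial x$ at the target), summing over the vertices of $g_j$''; the acyclicity guarantees that this reconnection process terminates and stays inside $\fHGC_{1,n}$. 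Finally, for a Kontsevich graph in $\Tw\KGraor$ with type I vertices carrying the $g_i$ and type II vertices, the same reconnection rule is applied, with the type II vertices becoming the external (numbered) vertices of the output graph in $\Graphs_n(s)\subset CH(\Graphs_n)$; this is the exact analog of the assignment $\Tpoly^{\otimes r}\to \Dpoly$ attached to a Kontsevich graph.

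To see that these assignments define a dg action of the colored operad, I would argue that the identities to be checked --- equivariance of the operadic compositions and compatibility with the differentials (the internal $\Graphs_n$-differential, the Hochschild differential, and the operadic differentials) --- are term-by-term the same identities already verified for the action on $\Dpoly^{\geq 1}\oplus\Tpoly^{\geq 1}$ in \cite{WillInfty}, since the reconnection rule is the exact graphical shadow of the index contractions $\sum_\alpha (\partial/\partial p_\alpha)_i(\partial/\partial x_\alpha)_j$. Equivalently, one realizes both actions as specializations of a single universal contraction operation and invokes that computation once. The three required properties are then checked directly: the braces structure on $CH(\Graphs_n)$ is standard by construction; the induced $\Poiss_2$-structure on $\fHGC_{1,n}$ has as its bracket a single hair reconnection (matching the standard Lie bracket recalled in Section \ref{sec:hairydef}) and as its product the disjoint union of hairy graphs, hence is the standard one; and the HKR element \eqref{equ:HKRelementor} acts by reconnecting each of the $k$ hairs of an input graph with $k$ hairs to a distinct type II vertex, which is precisely the symmetrization map $\Phi$ of Theorem \ref{thm:graphsHKR}.

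The twisting itself is handled formally: starting from the untwisted action of $\bpm \Br & \KGraor & \vGraor_2 \epm$ together with the canonical Maurer-Cartan element of the hairy complex, the functoriality of operadic twisting (the comonad structure of Section \ref{sec:twistable}) produces the action of $\bpm \Br & \Tw\KGraor & \Tw\vGraor_2 \epm$ on the correspondingly twisted pair, which is exactly $CH(\Graphs_n)$ and $\fHGC_{1,n}$ with their given differentials. I expect the main obstacle to be the differential-compatibility verification together with the bookkeeping of signs and symmetry factors: one must check that hair reconnection, the edge-contraction/vertex-splitting differential of $\Graphs_n$, and the Hochschild differential assemble into the dg-operad action axioms, and that the twisting is performed with respect to the Maurer-Cartan element whose twisted complexes reproduce $CH(\Graphs_n)$ and $\fHGC_{1,n}$ on the nose rather than merely up to quasi-isomorphism.
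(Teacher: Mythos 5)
Your overall strategy coincides with the paper's: define the untwisted action of $\bpm \Br & \KGraor & \vGraor_2 \epm$ by hair reconnection (hairs playing the role of $p$-derivatives, vertices of $x$-derivatives), verify the operad axioms against the piece $\delta_1$ of the differential that splits internal vertices, and then pass to the twisted operad by operadic twisting with the one-vertex, one-hair Maurer--Cartan element $\mu$. Up to that point your proposal matches the paper's proof.

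However, there is a genuine gap in how you dispose of the twisting step ``formally''. The operad $\Tw\vGraor_2$ appearing in the statement is the twist taken with respect to the \emph{Shoikhet} map $\hoLie_2\to\vGraor_2$ (this is forced by the intended composition with the twistable oriented stable formality morphism $\EBr\to\bpm \Br & \Tw\KGraor & \Tw\vGraor_2\epm$), not with respect to the standard map factoring through $\Lie_2$. Consequently, to obtain an action on the twisted spaces one must check three things that are not automatic: (i) $\mu$ satisfies the Maurer--Cartan equation for the \emph{non-standard} $L_\infty$ structure on $(\fHGC_{1,n},\delta_1)$ induced by the Shoikhet map, where a priori the higher brackets $\ell_k$, $k\geq 3$, could contribute; (ii) the differential obtained by twisting is exactly $\delta=\delta_1+[\mu,-]$, with no corrections from higher brackets; (iii) the image of $\mu$ under the Shoikhet $L_\infty$ morphism into $CH(\Graphs_n)$ is again $\mu$, so that the twisted differential there is the standard Hochschild differential. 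The paper isolates these as two lemmas and proves them combinatorially: since $\mu$ has a single hair, only graphs all of whose vertices have out-valence at most one (except possibly one vertex, for the twisted differential) can contribute; acyclic such graphs are trees (respectively, a cone over a forest); and a degree count together with the normalization of Remark \ref{rem:1in1outMC} (no vertices with exactly one incoming and one outgoing edge in the Shoikhet element) leaves only the two-vertex graph, which reproduces the standard bracket and differential. You flag the ``on the nose'' issue at the end, but without identifying that the relevant $L_\infty$ structure is the non-standard Shoikhet one --- which is precisely what makes these verifications necessary and non-trivial --- your argument does not establish that the twisted action lands on $CH(\Graphs_n)\oplus\fHGC_{1,n}$ with their given differentials.
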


Let us use this proposition to prove the main Theorems.
\begin{proof}[Proof of Theorems \ref{thm:main} and \ref{thm:brstructure}]
First note that through the quasi-isomorphism $\Poiss_n\to \Graphs_n$ we obtain a quasi-isomorphism $CH(\Poiss_n) \to CH(\Graphs_{n})$ compatible with all structures. We may hence replace $\Poiss_n$ by $\Graphs_n$ in the statements of Theorems \ref{thm:main} and \ref{thm:brstructure}. 

Now pick some twistable stable oriented formality morphism (cf. Theorem \ref{thm:stable formality existence})
\[
\EBr \to \bpm \Br & \Tw\KGraor & \Tw\vGraor_2 \epm.
\]
Composing with the action of the right-hand side on the two-colored vector space $CH(\Graphs_{n}) \oplus \fHGC_{1,n}$ then immediately gives the formality morphism of Theorem \ref{thm:brstructure}. By restriction along $\ELie\to \EBr$ we obtain the formality morphism over $\R$ whose existence is asserted in Theorem \ref{thm:main}.

Over $\K=\R$ we may obtain integral formulas for all structures and morphisms by using the Shoikhet formality morphism and its braces extension of section \ref{sec:orbrformalityR}.
\end{proof}

Now let us turn to the proof of Proposition \ref{prop:graphs act on graphs}. 
For later reference, we will in fact also show the following result, covering the action on $\fHGC_{m,n}$ for any $m$.
\begin{prop}\label{prop:maction}
The operad $\Tw\vGraor_{m+1}$ acts on $\fHGC_{m,n}$, inducing the standard $\Poiss_{m+1}$ structure 
via the canonical map $\Poiss_{m+1}\to \Tw\vGraor_{m+1}$. 
\end{prop}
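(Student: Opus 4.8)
The plan is to define the action by literally the same ``differentiate and contract indices'' recipe that governs the action \eqref{equ:dGraaction} of $\vGraor_{m+1}$ on multivector fields, transported through the dictionary in which the hairs of a hairy graph play the role of the cotangent variables $p_\alpha$ (so that the number of hairs is the polyvector degree), while the vertices of the underlying $\Graphs_n'$-graph play the role of the sites at which a $\p/\p x_\alpha$ may act. Recall that $\fHGC_{m,n}$ is graded by the number of hairs. Given $\Gamma\in\vGraor_{m+1}(r)$ and hairy graphs $g_1,\dots,g_r$, I would declare $\Gamma(g_1,\dots,g_r)$ to be zero unless the out-valence of the $i$-th vertex of $\Gamma$ equals the number of hairs of $g_i$ for every $i$ (the analogue, in $\vGraor$, of ``all $p$'s are consumed''), and otherwise set it equal to the sum, over all bijections between the hairs of $g_i$ and the outgoing half-edges at the $i$-th vertex of $\Gamma$, of the graph obtained by reconnecting, along each internal edge $(i,j)$ of $\Gamma$, the chosen hair of $g_i$ to a vertex of $g_j$ (summed over all vertices of $g_j$, the graphical Leibniz rule replacing $\p/\p x_\alpha$), and by promoting the hairs attached to the external legs of $\Gamma$ to the hairs of the output. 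Note that a reconnected hair ceases to be external, so the output again lies in the subcomplex $\Graphs_n'$ of graphs with univalent external vertices.

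First I would verify that this is an action of the operad $\vGraor_{m+1}$, i.e.\ that it is compatible with operadic composition. Because the formula is structurally identical to \eqref{equ:dGraaction}, the only facts about the ``coefficients'' that enter are that juxtaposition of hairy graphs is graded commutative and that ``attach a hair to a vertex'' is a derivation of it, exactly as $\p/\p x_\alpha$ is a derivation of $\K[x,p]$; so the verification runs parallel to the polynomial model. Compatibility with the composition $\Gamma_1\circ_j\Gamma_2$ then amounts to matching the composition law of $\vGraor_{m+1}$ --- plugging the outgoing edges of $\Gamma_1$ at vertex $j$ into the external legs of the inserted $\Gamma_2$ --- against the ``reconnect twice'' bookkeeping, and I would carry this out in the single generating case and relegate the Koszul signs to a routine check.

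The genuinely new ingredient, absent in the (homology-level) multivector field picture, is compatibility with the differential. The differential on $\fHGC_{m,n}$ splits into the internal $\Graphs_n$-differential (vertex splitting, together with the twisting term attaching the Maurer--Cartan avatar) and the term coming from the $\hoPoiss_m$-side of the deformation complex. I would check that hair reconnection commutes with vertex splitting: attaching a reconnected hair to a vertex $v$ and then splitting $v$ yields exactly the terms obtained by splitting $v$ first and attaching the hair to one of the two resulting vertices, which is once more the graphical Leibniz identity for $\p/\p x_\alpha$. The passage from $\vGraor_{m+1}$ to $\Tw\vGraor_{m+1}$ I would then organise through the operadic twisting machinery of section \ref{sec:twistable} (see also \cite{DolWill}): the black Maurer--Cartan vertices act by the same reconnection rule applied to the twisting element, and since $\fHGC_{m,n}$ is a deformation complex of the \emph{twisted} operad $\Graphs_n$ it already carries the corresponding Maurer--Cartan data, so the twisted differential of $\Tw\vGraor_{m+1}$ matches the differential present on $\fHGC_{m,n}$, upgrading the $\vGraor_{m+1}$-action to the desired $\Tw\vGraor_{m+1}$-action.

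Finally I would identify the structure induced along the canonical map $\Poiss_{m+1}\to\Tw\vGraor_{m+1}$. Under \eqref{equ:generatormap} the product $\wedge$ maps to the two-vertex graph with no internal edge, whose reconnection action is simply disjoint juxtaposition of the two hairy graphs with all hairs surviving --- the standard commutative product --- while the bracket $[-,-]$ maps to the (antisymmetrised) two-vertex graph with a single internal edge, whose action takes one hair of the first graph and attaches it to a vertex of the second, and vice versa. This is precisely the standard Lie bracket on $\fHGC_{m,n}$ recalled above, so the canonical map induces the standard $\Poiss_{m+1}$-structure, as claimed. I expect the main obstacle to be the differential compatibility together with the twisting bookkeeping of the previous paragraph: getting the vertex-splitting/reconnection Leibniz identity to hold on the nose with the correct Koszul signs, and confirming throughout that the subcomplex condition defining $\Graphs_n'$ (all external vertices univalent) is preserved; by contrast the operad axioms and the computation of the $\Poiss_{m+1}$-generators are essentially formal.
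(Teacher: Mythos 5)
Your proposal is correct and follows essentially the same route as the paper: the same hair-reconnection formula (hairs playing the role of the $p$-variables, vertices the role of the $x$-sites, unmatched hairs surviving as hairs of the output), the same splitting of the differential into a piece compatible with the raw $\vGraor_{m+1}$-action plus a bracket with the one-hair Maurer--Cartan element, and the same use of operadic twisting to absorb the latter, followed by the evaluation on the two generators of $\Poiss_{m+1}$. The only point the paper treats with more care---and which you would need if you pushed on to the $m=1$ Shoikhet case of Proposition \ref{prop:graphs act on graphs}---is the verification that $\mu$ remains a Maurer--Cartan element and that twisting by it reproduces $\delta$ when the $\hoLie_2$-structure is the nonstandard one; for the standard structure asserted here this is, as you implicitly assume, immediate.
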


The action will be defined in two steps.
\begin{enumerate}
\item First we split the differential on $\fHGC_{m,n}$ in two pieces $\delta=\delta_1+\delta_2$.
We then show that the operad $\vGraor_{m+1}$ acts on $(\fHGC_{m,n}, \delta_1)$. Similarly one may split the differential on $\Graphs_n$ and show that 
the colored operad $\bpm \Br & \KGraor & \vGraor_2 \epm$ acts on the colored vector space 
\[
CH((\Graphs_{n}, \delta_1))  \oplus (\fHGC_{1,n},\delta_1).
\]
\item The yet missing piece of the differential $\delta_2$ is obtained as the Lie bracket with a Maurer-Cartan element in $\fHGC_{m,n}$. We hence can apply the formalism of operadic twisting in a second step to obtain the desired action of $\Tw\vGraor_{m+1}$ on $(\HGC_{m,n}, \delta)$, and similarly the action of $\bpm \Br & \Tw\KGraor & \Tw\vGraor_2 \epm$ on $CH(\Graphs_{n})  \oplus (\fHGC_{1,n},\delta)$.
\end{enumerate}

Concretely, the action of $\vGraor_{m+1}$ on the graded vector space $\HGC_{m,n}$ is defined as follows. 
Consider a directed acyclic graph $\Gamma\in \dGraor_{m+1}$ with $k$ vertices and $k$ hairy graphs $\gamma_1,\dots,\gamma_k\in \HGC_{m,n}$. 
We denote the set of hairs of $\gamma_i$ by $H\gamma_i$ and the set of vertices by $V\gamma_i$. We assume that the number of hairs of $\gamma_i$ is equal to the number of outgoing edges at vertex $i$ of $\Gamma$, and define the action to be zero otherwise.
Furthermore, we suppose that $\Gamma$ has $p$ edges $(i_1,j_1),\dots,(i_p,j_p)$. 
Under this condition the action produces the following linear combination of graphs 
\[
 \Gamma(\gamma_1,\dots,\gamma_k) := \prod_{h_1\in H\gamma_{i_1}, v_1\in V\gamma_{j_1} }\cdots \prod_{h_p\in H\gamma_{i_p}, v_p\in V\gamma_{j_p} }
 \pm \gamma(h_1\to v_1,\dots,h_p\to v_p)
\]
where $\gamma(h_1\to v_1,\dots,h_p\to v_p)$ is the graph obtained from the union of the $\gamma_j$'s by connecting hair $h_i$ to vertex $v_i$, for all $i$.
If in the product any hair is chosen twice, we set $\gamma(h_1\to v_1,\dots,h_p\to v_p)=0$ by convention. 
The not reconnected hairs become the hairs of the output.

The action of the colored operad $\bpm \Br & \KGraor & \vGraor_2 \epm$ is defined by analogous formulas.

The action just constructed is (not yet) compatible with the differentials.
More precisely, the differential on $\HGC_{m,n}$ may be constructed in two steps: (i) there is a piece $\delta_1$ of the differential inserting the graph
$\begin{tikzpicture}[scale=.5, baseline=-.65ex]
 \node[int] (v) at (0,0) {};
 \node[int] (w) at (0.5,0) {};
 \draw (v) -- (w);
 \end{tikzpicture}
$
into each internal vertex. 
(ii) The full differential is then 
\[
\delta= \delta_1 + 
\underbrace{[\mu
 ,-]}_{\delta_2}.
\]
where 
\[
\mu:=
 \begin{tikzpicture}[baseline=-.65ex]
  \node[int] (v) at (0,0.2){};
  \draw (v) -- +(0,-.5);
 \end{tikzpicture}.
\]
The piece $\delta_1$ of the differential is already compatible with the action just constructed. The piece $\delta_2=[\mu,-]$ can be accounted for by operadic twisting. For the procedure to be applicable, we need that the element $\mu$ is a Maurer-Cartan element, and we need that the differential $\delta$ is indeed obtained by twisting with $\mu$. These statements are more or less obvious if $\HGC_{m,n}$ is equipped with the standard $\hoLie_{m+1}$-structure, i.e., the structure induced by the map $\hoLie_{m+1}\to \Lie_{m+1}\to \Graor_{m+1}$ sending the Lie bracket generator as in \eqref{equ:generatormap}. However, in the case $m=1$ we have to use the Shoikhet $\hoLie_2$-structure instead, induced by the Shoikhet map $\hoLie_{m+1}\to \Graor_{m+1}$. For this structure the aforementioned statements are less obvious, whence we formulate them as a Lemma.

\begin{lemma}
The element $\mu:=
 \begin{tikzpicture}[baseline=-.65ex]
  \node[int] (v) at (0,0.2){};
  \draw (v) -- +(0,-.5);
 \end{tikzpicture}$ 
 is a Maurer-Cartan element with respect to the Shoikhet $\hoLie_{m+1}$ structure on $(\HGC_{m,n},\delta_1)$ given by the Shoikhet map $\hoLie_{2}\to \Graor_{2}$ and the $\Graor_{m+1}$ action on $(\HGC_{1,n},\delta_1)$ defined above. Furthermore, the differential on $\HGC_{1,n}$ obtained by twisting by $\mu$ is indeed $\delta$.
\end{lemma}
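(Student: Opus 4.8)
The plan is to exploit the decomposition of the Shoikhet $\hoLie_2$ structure on $(\HGC_{1,n},\delta_1)$ into its binary part, which is the standard Schouten/Lie bracket, and its higher operations, which after normalizing the Shoikhet representative as in Remark \ref{rem:1in1outMC} are built from graphs all of whose vertices carry at least two incoming or at least two outgoing edges. Writing $\ell_1=\delta_1$, $\ell_2=[\,\cdot\,,\,\cdot\,]$ for the standard bracket, and $\ell_k$ for $k\geq 3$ for the higher brackets, the two assertions become the Maurer-Cartan equation $\sum_{k\geq 1}\tfrac{1}{k!}\ell_k(\mu,\dots,\mu)=0$ and the identity $\sum_{k\geq 0}\tfrac{1}{k!}\ell_{k+1}(\mu,\dots,\mu,-)=\delta_1+[\mu,-]$ for the twisted differential. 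First I would record that the binary contributions already produce the classical statement: $\delta_1\mu+\tfrac12[\mu,\mu]=0$ is precisely the Maurer-Cartan equation of $\mu$ for the \emph{standard} Lie structure, and $\delta_1+\ell_2(\mu,-)=\delta_1+[\mu,-]$ is the corresponding twisted differential; these are the ``obvious'' instances alluded to above. Here I use Corollary \ref{cor:Lieunique} to fix the Shoikhet representative so that its binary part is the standard edge cocycle, the $\lambda$-dependence of $\phi_\lambda$ living only in the operations of arity $\geq 3$.

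The heart of the argument is then a purely combinatorial vanishing statement: for every $k\geq 2$ and every hairy graph $x$ one has $\ell_k(\mu,\dots,\mu)=0$ and $\ell_{k+1}(\mu,\dots,\mu,x)=0$. The key observation is that $\mu$ has exactly one hair, so in the reconnection formula defining the action a copy of $\mu$ placed at a vertex $v$ of a Shoikhet graph $G\in\Graor_2$ forces $v$ to have out-degree at most one in $G$ (its single hair is matched either to one outgoing edge or to one external leg). By Remark \ref{rem:1in1outMC} every vertex of $G$ has in-degree $\geq 2$ or out-degree $\geq 2$; since the $\mu$-vertices have out-degree $\leq 1$, they must have in-degree $\geq 2$. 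For the all-$\mu$ term this is immediately contradictory by edge counting, as the number of edges equals the sum of out-degrees, which is $\leq k$, but also equals the sum of in-degrees, which is $\geq 2k$. For $\ell_{k+1}(\mu,\dots,\mu,x)$ I would run the same count, now using that double edges vanish in $\Graor_2$ (the edges of $\dGra_2$ being antisymmetric under transposition), so that the condition of in-degree $\geq 2$ at each $\mu$-vertex must be realized by edges from at least two distinct vertices. The counting then forces each of the $k$ $\mu$-vertices to send its unique outgoing edge to another $\mu$-vertex, producing a directed cycle among them and contradicting the acyclicity of $G$.

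Combining the two steps, the Maurer-Cartan series collapses to $\delta_1\mu+\tfrac12[\mu,\mu]=0$ and the twisted differential collapses to $\delta_1+[\mu,-]=\delta$, which is exactly the assertion. I expect the main obstacle to be precisely this vanishing step, and in particular the bookkeeping needed to isolate the standard binary bracket (to which Remark \ref{rem:1in1outMC} does not apply, and which must survive) from the higher operations (to which it does apply, and which must die), while tracking how the single-hair constraint, the valence normalization, acyclicity, and the vanishing of multiple edges together leave no admissible graph. A secondary point requiring care is the direct verification that $\delta_1\mu+\tfrac12[\mu,\mu]=0$, i.e.\ that $\mu$ is Maurer-Cartan for the standard structure, with the correct signs in the vertex-splitting differential $\delta_1$ and in the self-bracket $[\mu,\mu]$.
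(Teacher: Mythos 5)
Your proposal is correct and rests on the same essential mechanism as the paper's proof: since $\mu$ has a single hair, every $\mu$-slot of a contributing Shoikhet graph has out-degree at most one, and one then rules out all graphs except the two-vertex standard-bracket graph by combinatorics. The exclusion arguments differ in detail, though. For the Maurer--Cartan equation the paper does not invoke Remark \ref{rem:1in1outMC} at all: it observes that a directed acyclic graph with all out-degrees $\leq 1$ is a forest, and that a connected $k$-vertex graph of the right cohomological degree must have $2k-3$ edges, so only $k=2$ survives. You instead use the normalization of Remark \ref{rem:1in1outMC} to force in-degree $\geq 2$ at every $\mu$-vertex and derive a contradiction by summing degrees; this works but only applies to the arity $\geq 3$ part (the binary bracket is the standard one and is not subject to the remark), which is consistent with your final conclusion but not with your literal quantifier ``for every $k\geq 2$ one has $\ell_k(\mu,\dots,\mu)=0$'' --- taken at face value that would kill $[\mu,\mu]$ and collapse the MC equation to $\delta_1\mu=0$ rather than $\delta_1\mu+\tfrac12[\mu,\mu]=0$; you should state the vanishing for $k\geq 3$. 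For the twisted differential the paper again uses the edge count $2k-3$ together with a structural analysis (delete the $x$-vertex, the rest is a forest, the $x$-vertex must attach to every remaining vertex, and the leaves then violate the no-$1$-in-$1$-out condition); your argument replaces the degree count by the observation that double edges vanish, so each $\mu$-vertex must receive an edge from another $\mu$-vertex, forcing a functional digraph on the $\mu$-vertices and hence a directed cycle. This is a clean alternative that trades the cohomological degree bookkeeping for the full strength of Remark \ref{rem:1in1outMC} plus acyclicity. Both routes are valid; yours is arguably slightly more self-contained on the second point, while the paper's handles the first point without needing the valence normalization at all.
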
 
\begin{proof}
The element $\mu$ has exactly one hair. Hence the only graphs that can contribute to the Maurer-Cartan equation must be such that every vertex has at most one outgoing edge. However, the only directed acyclic such graphs are trees. Trees with more than 2 vertices are of the wrong cohomological degree to appear in the MC equation as one easily counts. But the tree with exactly two vertices just produces Lie bracket of the standard Lie structure on $\HGC_{1,n}$, for which $\mu$ is a Maurer-Cartan element. This shows the first statement, i.e., that $\mu$ is a Maurer-Cartan element for the Shoikhet $\hoLie_2$-structure on $\HGC_{1,n}$.

To show the second statement first recall our standing assumption (or assertion) that the image of the Shoikhet map $\hoLie_{2}\to \Graor_{2}$ does not involve graphs with vertices with exactly one incoming and exactly one outgoing vertex, cf. Remark \ref{rem:1in1outMC}. Now the directed acyclic graphs contributing to the twisted differential must have the following characteristic: (i) all vertices except for possibly one have $\leq 1$ outgoing edge, (ii) no vertex has exactly one incoming and one outgoing edge, (iii) to have the correct cohomological degree the graph must have $k$ vertices and $2k-3$ edges for some $k$. We claim that the only graph satisfying these conditions is again the graph with two vertices connected by one edge, which produces the standard (non-deformed) contribution to the twisted differential. Indeed, a directed acyclic graph satisfying (i) must have the following shape: after deleting one vertex (call it "special vertex") and all edges adjacent to this vertex the remaining graph must be a forest. But to satisfy (iii) the forest must in fact be a tree and the special vertex must connect to all vertices of the tree. But then vertices at the leaves of the tree have one incoming and one outgoing edge, contradicting (ii), unless the whole graph contains only two vertices. Hence the Lemma is shown.
\end{proof}

Knowing that $\mu$ is a Maurer-Cartan element in $(\fHGC_{m,n},\delta_1)$, including the case $m=1$, the "machinery" of operadic twisting \cite{DolWill} immediately gives us an action of the operad $\Tw\vGraor_{m+1}$ on the twisted vector space $(\HGC_{m,n},\delta)$. Concretely, this action is obtained by "inserting" a copy of the Maurer-Cartan element in the "black" vertices occupied by the formal avatar of the Maurer-Cartan element in graphs in $\Graphsor_{m+1}$.

Similarly, by the colored version of twisting one obtains an action of the colored operad  
$\bpm \Br & \Tw\KGraor & \Tw\vGraor_2 \epm$ on $CH(\Graphs_n)  \oplus (\fHGC_{1,n},\delta)$.
Again, the action is constructed by inserting $\mu$ in the slots occupied by the formal avatar of the Maurer-Cartan element ("black vertices") in graphs in $\KGraphsor $ and $\Graphsor_2$. Note that in general only few of these graphs will contribute, namely those for which each black vertex has at most one outgoing edge.

There is remaining technical point to be checked here.
We already checked that twisting the $\hoLie_2$ structure on $\fHGC[1]$ yields the standard differential. We should also check that twisting the $\hoLie_2$ structure on $CH(\Graphs_n)$ produces the standard differential.

\begin{lemma}
The image of the MC element $\mu$ under the Shoikhet $L_\infty$ morphism is again $\mu$, interpreted as an element of $CH(\Graphs_n)$. In particular, the twist reproduces the standard differential on $CH(\Graphs_n)$.
\end{lemma}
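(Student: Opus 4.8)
The plan is to write the image of $\mu$ as the usual Maurer--Cartan series of the $\infty$-morphism and to show that all but its linear term vanish. Recall that if $U=\{U_k\}$ denotes the $\infty$-morphism $\fHGC_{1,n}\to CH(\Graphs_n)$ induced by the $\KGraor$-part of the oriented braces formality morphism, then the image of the Maurer--Cartan element $\mu$ is
\[
U(\mu)=\sum_{k\geq 1}\frac1{k!}\,U_k(\mu,\dots,\mu),
\]
and each component $U_k$ is realized by a linear combination of oriented Kontsevich graphs in $\KGraor$ with $k$ type I vertices, one for each copy of $\mu$. The decisive input, already exploited in the previous Lemma, is that $\mu$ has exactly one hair. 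By the definition of the graph action this forces each of the $k$ type I vertices of any contributing graph to carry exactly one outgoing edge; hence every graph occurring in $U_k(\mu,\dots,\mu)$ has precisely $k$ edges, and all of its type II vertices are sinks.

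The second step is a standard dimension count for the weights. The weight of a Kontsevich graph with $k$ aerial and $s$ boundary vertices is an integral of a form of degree equal to the number of edges over a configuration space of dimension $2k+s-2$, so non-vanishing forces $k=2k+s-2$, i.e.\ $k+s=2$. With the edge count fixed to $k$ as above, this leaves only the cases $(k,s)=(1,1)$ and $(k,s)=(2,0)$, since for $k\geq 3$ one would need $s<0$.

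The third step disposes of $(k,s)=(2,0)$. Here both type I vertices have out-degree one while there is no type II vertex to receive an edge, so the two edges can only connect the two type I vertices to one another. Any such graph contains a directed cycle, hence is zero in $\KGraor$ (equivalently, its oriented weight vanishes by the support property of the truncated propagator \eqref{equ:dphitrunc}). Therefore only the linear term survives, and $U(\mu)=U_1(\mu)$.

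Finally, $U_1$ is by construction the hairy Hochschild--Kostant--Rosenberg morphism of Theorem \ref{thm:graphsHKR} (cf.\ Proposition \ref{prop:graphs act on graphs}), whose linear piece is the series \eqref{equ:HKRelementor}. Applied to the one-hair generator $\mu$ only the $k=1$ summand of \eqref{equ:HKRelementor} can contribute, reconnecting the single hair to the single output vertex, and one reads off $U_1(\mu)=\mu$ regarded as an element of $CH(\Graphs_n)$. Hence $U(\mu)=\mu$, and twisting by $U(\mu)$ adds the bracket $[\mu,-]$, which is exactly the standard Hochschild differential on $CH(\Graphs_n)$. I expect the main obstacle to lie in the middle steps: one must check that the braces formality morphism really uses the Kontsevich-type configuration spaces $C_{k,s}$ for this $L_\infty$ piece, so that the dimensional argument applies, and that the truncated propagator genuinely annihilates the cyclic $(k,s)=(2,0)$ contribution; the identification $U_1(\mu)=\mu$ is then only a matter of the HKR normalization.
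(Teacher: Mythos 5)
Your proposal is correct, but the decisive middle step is argued by a genuinely different mechanism than in the paper. You rule out all nonlinear Taylor components $U_k(\mu,\dots,\mu)$, $k\geq 2$, by a weight-form dimension count: the one-hair condition pins the edge number of a contributing Kontsevich graph to $k$, while non-vanishing of the configuration-space integral forces the edge number to equal $\dim C_{k,s}=2k+s-2$, leaving only $(k,s)=(1,1)$ after acyclicity kills $(2,0)$. The paper instead argues purely combinatorially: it invokes the standing normalization that no vertex of the formality morphism has exactly one incoming and one outgoing edge (Remark \ref{rem:1in1outMC}) together with the valence constraints on graphs in $\KGraor$, and then observes that acyclicity produces a ``top'' type I vertex with no incoming and exactly one outgoing edge, which violates these constraints unless the graph is the single edge producing $\mu$. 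The trade-off is real: your argument is cleaner and more systematic whenever the weights genuinely come from the Kontsevich/Shoikhet integrals over $C_{k,s}$ (the caveat you flag yourself), but it ties the proof to the transcendental construction over $\K=\R$, whereas the paper's version applies verbatim to any twistable stable oriented formality morphism satisfying the stated normalizations --- in particular the rational ones obtained by obstruction theory, which the main theorems also use. Note, however, that your edge-counting identity $\#\text{edges}=2k+s-2$ is really forced by the cohomological degree of the $k$-th Taylor component as an element of $\KGraor(k)$, not only by the integral realization; recast in those terms your argument would recover the full generality of the paper's.
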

\begin{proof}
The contributing acyclic Kontsevich graphs must be such that all type I vertices have exactly one outgoing vertex. If the graph contains at least two type one vertices or at least three vertices, then all type I vertices must have valence at least two, and all type II vertices valence at least I. Furthermore, we can again assume that there are no vertices with exactly one incoming and exactly one outgoing edge. But now the top type I vertex has no incoming edges and exactly one outgoing edge. Hence there must be exactly one other vertex of type II, to which it connects by an edge. But this graph just produces $\mu$, considered as element of $CH(\Graphs_n)$.
\end{proof}

\begin{rem}
 We note that while we have constructed an action of the graph complex $\Tw\vGraor_{m+1}$ on $\fHGC_{m,n}$, we will only really need in explicit calculations below the action of the much smaller sub-operad $\Tw\Graor_{m+1}$.
\end{rem}



\subsection{The projection to the connected part}
Consider $\fHGC_{m,n}$ with the standard $\Poiss_{m+1}$-structure. It is clear that the connected piece $\HGC_{m,n}\subset \fHGC_{m,n}$ is a sub-dg $\Lie_{m+1}$ algebra. Also, it is clear that the canonical projection to the connected piece $\fHGC_{m,n}\to \HGC_{m,n}$ is a map of $\Lie_{m+1}$ algebras, left inverse to the inclusion.

Now consider $\fHGC_{1,n}$ with the Shoikhet $\hoLie_{2}$ algebra structure.
Since this structure is expressed using connected graphs in $\Graor_2$ only, it is clear that the connected piece $\HGC_{1,n}\subset \fHGC_{1,n}$ is a again a sub-$\hoLie_2$-algebra.
However, in this case it is a priori not clear whether the inclusion of the connected piece has an $L_\infty$ left (homotopy) inverse $\fHGC_{m,n}\to \HGC_{m,n}$.
The goal of this section is to show the following Theorem, giving an affirmative answer.
\begin{thm}\label{thm:shoikhet_projection}
 There is an $L_\infty$ morphism
 \[
  \fHGC_{1,n}' \to \HGC_{1,n}'
 \]
between the full and the connected hairy graph complexes, endowed with the Shoikhet $L_\infty$ structure, such that the linear piece of the $L_\infty$ morphism is just the obvious projection sending disconnected graphs to zero. This statement holds over the ground field $\K=\Q$.
\end{thm}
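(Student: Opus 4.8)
The approach I would take is to exploit the fact that, as a graded vector space, the full complex is the completed symmetric algebra on its connected part, $\fHGC_{1,n}' \cong \widehat{S}(\HGC_{1,n}')$, disjoint union of graphs being the (graded commutative) product and connected graphs being the generators. In characteristic zero this $\widehat{S}(\HGC_{1,n}')$ is simultaneously the cofree conilpotent cocommutative coalgebra on $\HGC_{1,n}'$, with coproduct $\Delta$ given by deconcatenation of connected components; its space of primitives (equivalently cogenerators) is exactly $\HGC_{1,n}'$, and the tautological projection $P_0$ killing disconnected graphs is the corresponding projection to cogenerators. Recall that an $L_\infty$ morphism $\fHGC_{1,n}' \to \HGC_{1,n}'$ is the same as a morphism of cocommutative dg coalgebras $C(\fHGC_{1,n}'[1]) \to C(\HGC_{1,n}'[1])$ between the Chevalley--Eilenberg coalgebras, and I want one whose linear (corestriction) term is $P_0$.

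The key structural observation, and the place where the special form of the Shoikhet structure is used, is that every Shoikhet operation is a sum of \emph{connected}-graph operations: the $k$-ary bracket is obtained by letting connected graphs $\Gamma \in \GCor_2$ (the image of the generators of $\hoLie_2$ under the rational Shoikhet map of Theorem \ref{thm:oriented MC}) act by reconnecting hairs. Because the reconnecting graph $\Gamma$ is connected, the interaction of such an operation with the splitting of a graph into sub-collections of its connected components obeys the Leibniz rule; concretely I would check that each bracket $l_k$ is a coderivation of the deconcatenation coproduct $\Delta$ in each of its slots. This expresses the Shoikhet $\hoLie_2$ structure on $\fHGC_{1,n}'$ as an $L_\infty$ structure \emph{internal to conilpotent cocommutative coalgebras}, with $\HGC_{1,n}'$ as its sub-$L_\infty$-algebra of primitives. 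As a consistency check, the strict Gerstenhaber bracket, being a biderivation of the disjoint-union product, is such a coderivation, and for it $P_0$ is already a strict morphism; the Shoikhet higher brackets $l_{\geq 3}$ are precisely where $P_0$ ceases to be strict, since a connected $\Gamma$ can glue the components of a disconnected input into a connected output.

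Granting this, I would produce the retraction by the general principle that a cofree conilpotent cocommutative coalgebra over $\Q$ carrying such a coderivation-compatible $L_\infty$ structure is recovered from the induced structure on its primitives, and that the projection to primitives lifts canonically to an $L_\infty$ morphism. The higher Taylor components $F_k$ of this morphism are built from $\Delta$ together with the Eulerian idempotents of the symmetric coalgebra (the components of $\log$ of the identity), and it is exactly the denominators in these idempotents that force the ground field to have characteristic zero, matching the statement of the theorem. The linear term is the projection onto primitives, which is $P_0$, as required.

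The step I expect to be the main obstacle is the rigorous verification of the coderivation (coalgebra-compatibility) property and the attendant construction of the $F_k$. The subtlety is that a single connected $\Gamma$ genuinely merges components, so one must verify that deconcatenating the output reorganizes precisely into the Leibniz terms, with the correct Koszul signs coming from the edge and hair orderings, and that the resulting $F_k$ satisfy the full system of $L_\infty$ morphism equations. If a clean internal statement proves awkward, the fallback is ordinary obstruction theory for the convolution $L_\infty$ algebra controlling $L_\infty$ morphisms with fixed linear part $P_0$: one builds $F = (P_0, F_2, F_3, \dots)$ by increasing arity, and the coderivation property guarantees that each obstruction cocycle is supported on decomposable (disconnected) configurations and can therefore be killed by an explicit primitive assembled from the same connected operations and the Eulerian projections. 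Throughout, using the rational Shoikhet representative of Theorem \ref{thm:oriented MC} keeps every operation, obstruction, and primitive defined over $\Q$.
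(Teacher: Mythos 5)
Your key structural claim --- that each Shoikhet bracket $l_k$ is a (co)derivation of the disjoint-union (co)product in each slot --- is false, and both your main route and your obstruction-theoretic fallback lean on it. Take a connected graph $\Gamma\in\GCor_2$ contributing to $l_k$ whose $i$-th vertex has two outgoing edges, say towards vertices $j$ and $j'$ (by Remark \ref{rem:1in1outMC} every vertex of every contributing graph may be assumed to have at least two outgoing or at least two incoming edges, so this situation is unavoidable for $k\geq 3$). Feeding the product $\alpha\cdot\beta$ into slot $i$, the sum over hair choices contains cross terms in which one hair is taken from $\alpha$ and attached to $\gamma_j$ while another is taken from $\beta$ and attached to $\gamma_{j'}$; in such a term both $\alpha$ and $\beta$ are nontrivially glued into the output, so it is neither of the Leibniz form $l_k(\alpha,\dots)\cdot\beta\pm\alpha\cdot l_k(\beta,\dots)$ nor of the coderivation form $d(\alpha)\cdot\beta$ with $d$ independent of $\beta$ (the same happens on the vertex side when a vertex has two incoming edges). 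So the Shoikhet structure is \emph{not} an $L_\infty$ structure internal to conilpotent cocommutative coalgebras: the compatibility between the product and the higher brackets holds only up to homotopy, and those homotopies are precisely the extra data of a $\hoe_2$ structure. Consequently the ``general principle'' you invoke has no hypothesis to apply to, and in the fallback the vanishing of the obstruction cocycles is not established --- your argument that each obstruction is ``supported on decomposables'' again presupposes the coderivation property.

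The paper's proof takes this failure seriously. It first invokes Theorem \ref{thm:stable formality existence} to know that the Shoikhet $L_\infty$ structure on $\fHGC_{1,n}'$ extends to a $\hoe_2$ structure defined over $\Q$; this is a genuine extra input (an oriented stable $\hoe_2$ formality morphism), not something read off from connectedness of the graphs in $m_{trans}$. It then uses the commutative bar construction: for any $\hoe_2$ algebra $\alg g$ there is a natural $L_\infty$ structure on $B_{\Com}(\alg g)$ and a natural $L_\infty$ map $\alg g\dashrightarrow B_{\Com}(\alg g)$ coming from the zigzag $\alg g\stackrel{\sim}{\dashleftarrow}\Omega_{\e_2^\vee}B_{\e_2}(\alg g)\to\Omega_{\Com}B_{\e_2}(\alg g)$ (Lemma \ref{lem:ginftyprojection}). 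Finally, the freeness of $\fHGC_{1,n}'$ as a commutative algebra on $\HGC_{1,n}'$ enters through a Harrison-homology spectral sequence showing that $\HGC_{1,n}'\to B_{\Com}(\fHGC_{1,n}')$ is an $L_\infty$ quasi-isomorphism (Lemma \ref{lem:connectedprojection}); inverting it and composing yields the retraction with linear part $P_0$. Your instincts about where the symmetric-(co)algebra structure and characteristic zero enter are sound, but the homotopy-coherent replacement for your strict coalgebra compatibility is the $\hoe_2$ structure, and without importing it your construction of the higher Taylor components $F_k$ does not get off the ground.
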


We use the assertion of Theorem \ref{thm:stable formality existence} that the $L_\infty$ structure on $\fHGC_{1,n}'$ is part of a $\hoe_2$ structure on
 $\fHGC_{1,n}'[1]$ defined over $\K=\Q$. Hence we can use the following result.

\begin{lemma}\label{lem:ginftyprojection}
Let $\alg g$ be a $\hoe_2$ algebra. Then the bar construction as a $\hoCom$ algebra $B_{\Com}(\alg g)$ is endowed with a canonical $L_\infty$ structure and one has a natural $L_\infty$ map
\[
\alg g\dashrightarrow B_{\Com}(\alg g).
\]
\end{lemma}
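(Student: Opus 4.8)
The plan is to exploit the classical fact that the Gerstenhaber operad $\e_2=\Poiss_2$ is assembled from the commutative and the (shifted) Lie operad by a distributive law, $\e_2\cong\Com\circ\Lie_2$ as $\mathbb S$-modules, the distributive law being exactly the Leibniz identity, with $\Com$ on the outside. Since $\e_2$ is Koszul (and Koszul self-dual up to a shift), this factorization dualizes to a factorization of the Koszul dual cooperad, and in particular the underlying graded vector space of the cofree $\e_2^\vee$-coalgebra on $\alg g$ is identified with the cofree $\Lie_2^\vee$-coalgebra on the cofree $\Com^\vee$-coalgebra, i.e. $\e_2^\vee(\alg g)\cong\Lie_2^\vee\bigl(\Com^\vee(\alg g)\bigr)=\Lie_2^\vee\bigl(B_\Com(\alg g)\bigr)$, where by construction $\Com^\vee(\alg g)=B_\Com(\alg g)$ is the (underlying coalgebra of the) commutative bar construction. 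A $\hoe_2=\Omega(\e_2^\vee)$-structure on $\alg g$ is the same as a square-zero degree $-1$ coderivation $D$ of this cofree $\e_2^\vee$-coalgebra, and the whole argument consists in reading $D$ through the two-step factorization above.

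First I would extract from $D$ the part coming only from the $\Com^\vee$-cogenerators: this is the bar differential $b$ of the $\hoCom$-algebra underlying $\alg g$ (obtained by restriction along $\Com\to\e_2$, resp. $\hoCom\to\hoe_2$), and it supplies $B_\Com(\alg g)$ with its differential. The remaining part of $D$ is a coderivation for the \emph{outer} cofree $\Lie_2^\vee$-coalgebra structure on $B_\Com(\alg g)$, and such a square-zero coderivation is by definition precisely a $\hoLie_2$-structure on $B_\Com(\alg g)$; up to the suspension implicit in the bar construction this is an honest $L_\infty$ (i.e. $\hoLie$) structure, as stated. Concretely the generating brackets are produced by letting the bracket component of the $\hoe_2$-structure act on cogenerators and spreading it over the symmetric powers by the (homotopy) Leibniz rule, which is legitimate exactly because in a Gerstenhaber algebra the bracket is a biderivation of the product. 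This is the canonical $L_\infty$ structure asserted by the Lemma.

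The natural map $\alg g\dashrightarrow B_\Com(\alg g)$ is the canonical $\infty$-morphism attached to the bar--cobar (Koszul) adjunction: its linear term is the inclusion of $\alg g$ as the weight-one cogenerators $\Com^\vee(1)\otimes\alg g\subset B_\Com(\alg g)$, and its higher components are dual to the iterated commutative (co)products. That this is a morphism of $L_\infty$-algebras --- with $\alg g$ carrying the $\hoLie_2$-structure obtained by restriction along $\Lie_2\to\e_2$, and $B_\Com(\alg g)$ carrying the structure just built --- follows from compatibility of $D$ with the canonical twisting cochain $\Com^\vee\to\Com\to\e_2$, and can be checked directly on the cofree coalgebra level, where the map is realized as a morphism of $\Lie_2^\vee$-coalgebras.

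I expect the main obstacle to be the homotopy coherence of the two-step reading of $D$: while the associated graded of $D$ for the filtration by symmetric (commutative) weight manifestly splits into a $\Com^\vee$-part and a $\Lie_2^\vee$-part, one must show that $D$ respects the outer $\Lie_2^\vee$-cofree structure on the nose, so that the induced operations genuinely satisfy the $L_\infty$-relations and not merely their associated-graded shadows. This is exactly where one uses that $\alg g$ is a bona fide $\hoe_2$-algebra and that the distributive law $\Lie_2\circ\Com\to\Com\circ\Lie_2$ is compatible with the Koszul resolutions; I would handle it either by a leading-term/filtration argument isolating the $\Lie_2^\vee$-coderivation component, or by invoking the homotopy theory of operads presented by distributive laws (as in \cite{LVbook}), which guarantees the validity of the ``first $\Com$, then $\Lie_2$'' decomposition up to the required homotopies. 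The remaining checks --- square-zero of $b$, the biderivation property of the induced brackets, and the $L_\infty$-relations for the map --- are then routine bookkeeping on the cofree coalgebra.
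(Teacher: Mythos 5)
Your argument is correct in substance but takes a genuinely different route from the paper's. The paper dismisses the first assertion as well known (recording only that $B_{\Com}(\alg g)$ is in fact a homotopy Lie bialgebra, of which the Lemma uses only the $L_\infty$ part) and obtains the map $\alg g\dashrightarrow B_{\Com}(\alg g)$ not by a direct formula but from the zigzag
\[
\alg g \stackrel{\sim}{\dashleftarrow} \Omega_{\e_2^\vee}(B_{\e_2}(\alg g)) \to \Omega_{\Com}(B_{\e_2}\alg g) \stackrel{\sim}{\dashleftarrow} B_{\Com}(\alg g),
\]
inverting the two bar--cobar counit quasi-isomorphisms; the middle arrow is functoriality of the cobar construction along $\Com\to\e_2$. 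You instead unwind the codifferential $D$ on $\e_2^\vee(\alg g)\cong \Lie_2^\vee(\Com^\vee(\alg g))$ and exhibit the $\infty$-morphism directly (in effect, as the dg coalgebra inclusion $\Lie_2^\vee(\alg g)\hookrightarrow \e_2^\vee(\alg g)$); this buys explicit formulas and avoids inverting quasi-isomorphisms, at the price of the coalgebra bookkeeping you describe. One correction to the step you single out as the main obstacle: a leading-term/filtration argument would, as you yourself observe, only give the associated-graded statement and therefore would \emph{not} close the gap; but no such argument is needed. The outer factor $\Lie_2^\vee$ (a shift of the cocommutative cooperad) is a sub-cooperad of $\e_2^\vee$, so any coderivation of the $\e_2^\vee$-coalgebra $\e_2^\vee(\alg g)$ is automatically a coderivation for the corestricted $\Lie_2^\vee$-costructure, and with respect to that structure $\e_2^\vee(\alg g)$ is on the nose (not merely up to filtration) the cofree $\Lie_2^\vee$-coalgebra on $\Com^\vee(\alg g)$ --- this is the exact coalgebra-level shadow of the distributive law. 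The same observation shows that $D$ preserves the subcoalgebra $\Lie_2^\vee(\alg g)$ and restricts there to the bar differential of the underlying $\hoLie_2$-structure, which is precisely the compatibility needed for your candidate $\infty$-morphism. With this in hand the remaining verifications are indeed routine.
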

\begin{proof}
The first assertion is well known. In fact $B_{\Com}(\alg g)$ is an infinity Lie bialgebra, but we care only about the $L_\infty$ part of the structure. Now the desired $L_\infty$ map is realized by the following natural zigzag
\[
\alg g \stackrel{\sim}{\dashleftarrow}  
\Omega_{\e_2^\vee}(B_{\e_2}(\alg g)) \to \Omega_{\Com}(B_{\e_2}\alg g)\cong  \stackrel{\sim}{\dashleftarrow} B_{\Com}(\alg g).
\]
\end{proof}

Furthermore, in our case $\alg g=\fHGC_{1,n}'$ is in fact a free commutative algebra in the primitive (connected) part $\HGC_{1,n}'$. As a consequence one can show the following result.
\begin{lemma}\label{lem:connectedprojection}
The composition of maps of $L_\infty$ algebras
\[
\HGC_{1,n}'\hookrightarrow \fHGC_{1,n}' \dashrightarrow B_{\Com}(\fHGC_{1,n}').
\]
is a quasi-isomorphism.
\end{lemma}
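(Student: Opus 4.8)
The plan is to reduce the statement, as usual, to the behaviour of the linear parts. A morphism of $L_\infty$ algebras is a quasi-isomorphism exactly when its first Taylor coefficient is a quasi-isomorphism of underlying complexes, so it suffices to understand the linear part of the composite. The inclusion $\HGC_{1,n}'\hookrightarrow\fHGC_{1,n}'$ is a \emph{strict} morphism of $\hoe_2$-algebras, since the connected part is a sub-$\hoLie_2$-algebra as observed above, so its linear part is the inclusion itself. Unwinding the bar--cobar zigzag defining the map $\fHGC_{1,n}'\dashrightarrow B_{\Com}(\fHGC_{1,n}')$ in Lemma~\ref{lem:ginftyprojection}, one checks (by inspecting the (co)units of the adjunctions involved) that its linear part is the canonical map $a\mapsto[a]$ into the bar-length-one cogenerators. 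Hence the linear part of the composite is the chain map
\[
\HGC_{1,n}'\to B_{\Com}(\fHGC_{1,n}'),\qquad v\mapsto[v],
\]
and everything reduces to showing that this map is a quasi-isomorphism.

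To do this I would use the key structural input recalled above, namely that $A:=\fHGC_{1,n}'$ is the free graded commutative algebra $S(V)$ on the connected part $V:=\HGC_{1,n}'$, the product being disjoint union of graphs, with the graph differential a derivation restricting to $V$. Consequently the differential preserves the word-length grading $S^{k}(V)$ (number of connected components). The reduced commutative bar construction $B_{\Com}(A)$ then carries two differentials: the internal differential $d_0$ induced by the graph differential, which preserves bar length, and the Harrison differential $d_1$ coming from multiplication, which lowers bar length by one. Filtering by bar length yields a spectral sequence whose $E^0$-differential is $d_0$; since symmetric powers commute with homology in characteristic zero, $E^1\cong B_{\Com}(H(A))$ with $d_1$ the Harrison differential of $H(A)=S(H(V))$. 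Because $\Com$ is Koszul over $\Q$, the Harrison homology of the free commutative algebra $S(H(V))$ is concentrated in bar length one and equals $H(V)$, so $E^2\cong H(V)=H(\HGC_{1,n}')$ sits in bar length one and the spectral sequence degenerates, giving $H(B_{\Com}(\fHGC_{1,n}'))\cong H(\HGC_{1,n}')$.

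It remains to match this identification with the map $v\mapsto[v]$. Since connected graphs are precisely the indecomposables (they are not disjoint-union products), the class of $[v]$ on the $E^2$-page is exactly the homology class of $v$ in $H(V)$; thus the composite induces the identity on $H(\HGC_{1,n}')$ and is a quasi-isomorphism. The main obstacle I expect is the \emph{convergence} of this spectral sequence, since $\fHGC_{1,n}'$ consists of possibly infinite series and $B_{\Com}$ involves a completion. I would dispose of this by decomposing everything according to the auxiliary grading by number of hairs together with loop order: on each such finite piece only finitely many bar lengths occur, because a disjoint union of a bounded number of vertices and hairs has a bounded number of components, so the bar-length filtration is bounded there and the spectral sequence converges. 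A secondary technical point, already flagged, is the precise identification of the linear part of the $L_\infty$ map of Lemma~\ref{lem:ginftyprojection}, which is routine once the zigzag is written out explicitly.
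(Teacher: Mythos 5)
Your proposal is correct and follows essentially the same route as the paper: reduce to the linear component (which is the natural embedding $v\mapsto[v]$), filter $B_{\Com}(\fHGC_{1,n}')$ by bar/Lie-word length, and use that the Harrison homology of the free commutative algebra $H(\fHGC_{1,n}')\cong S^+(H(\HGC_{1,n}'))$ is concentrated on the generators. Your extra care about convergence of the spectral sequence (via the hair/loop-order grading) is a welcome addition that the paper leaves to ``standard spectral sequence arguments.''
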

\begin{proof}
The statement only involves the linear component of the $L_\infty$ map. The linear component of the right-hand map can be realized just as the inclusion of complexes. Hence the combined map of complexes 
\[
\HGC_{1,n}'\to B_{\Com}(\fHGC_{1,n}').
\]
is just the natural embedding. The homology of the right-hand side may be computed by the spectral sequence associated to the filtration by the length of Lie words. On the $E^1$ page we find the free Lie coalgebra in $H(\fHGC_{1,n}')$. The differential is the Harrison differential with respect to the commutative product structure. But since $H(\fHGC_{1,n}')\cong S^+(H(\HGC_{1,n}'))$ is free, this homology is given by the generators $H(\HGC_{1,n}')$. Hence on the $E^2$ page the map above is a quasi-isomorphism, and hence a quasi-isomorphism by standard spectral sequence arguments.
\end{proof}

By Lemmas \ref{lem:ginftyprojection} and \ref{lem:connectedprojection} we hence have an $L_\infty$ map 
\[
\fHGC_{1,n}'\dashrightarrow \HGC_{1,n}'
\]
whose linear component is just the projection to the connected part, and Theorem \ref{thm:shoikhet_projection} is shown.

\hfill\qed

\subsection{Remark: Standard Kontsevich formality and an extended hairy graph complex}
Note that we were forced into discussing the acyclic (non-)formality because, in some sense, our hairy graph complexes $\fHGC_{m,n}$ are too small in that they are not allowed to contain graphs without hairs. If we artificially enlarge these complexes by allowing graphs without hairs, i.e., we replace $\fHGC_{m,n}=S^+(\HGC_{m,n})$ by $C:= S^+(\HGC_{m,n}\oplus \GC_n[m-n])$, then many of the statements simplify. Concretely, the space $C$ is acted upon by the operad $\dGra_2$. As a consequence the ordinary Lie algebra structure on $\fHGC_{1,n}$ naturally extends, as well as the Shoikhet $L_\infty$ structure given by acyclic graphs as discussed above, and both $L_\infty$ algebras are quasi-isomorphic
\[
 S \stackrel{\sim}{\dashrightarrow} S'.
\]
Since the Shoikhet $L_\infty$ structure is given by connect graphs, the result also descends to the connected parts, i.e., there is an $L_\infty$ quasi-isomorphism
\[
 \HGC_{1,n}\oplus \GC_n[1-n] \stackrel{\sim}{\dashrightarrow} \left( \HGC_{1,n}\oplus \GC_n[1-n] \right)'.
\]

Unfortunately, we currently do not know what the meaning of the enlarged dg Lie algebra $\HGC_{1,n}\oplus \GC_n[1-n]$ is from a topological standpoint.

\subsection{Remark: The $\Poiss_m$ homology of $\Poiss_n$}
In this paper, we discuss the Hochschild complexes of the Poisson operads $\Poiss_n$, i.e., the deformation complexes
\[
\Def(\hoe_1\to \Poiss_n).
\]
Similarly, one can also consider the "higher" Hochschild complexes $\Def(\hoe_m\to \Poiss_n)$. These complexes carry a $\hoPoiss_{m+1}$-algebra structure, and their homology is quasi-isomorphic to the hairy graph complex $\fHGC_{m,n}$. 
However, in the case $m>1$ the relation between the $\hoe_{m+1}$-algebra $\Def(\hoe_m\to \Poiss_n)$ and the $\Poiss_{m+1}$-algebra $\fHGC_{m,n}$ is much simpler: The natural inclusion (quasi-isomorphism) 
\[
\fHGC_{m,n}\to \Def(\hoe_m\to \Poiss_n)
\]
is already compatible with the $\hoPoiss_{m+1}$-structure, cf. \cite{CalWill} and also \cite[section 2.1]{TW}.

\section{Operad of natural operations on $\fHGC_{m,n}$}\label{sec:natural ops}
We have seen above that the operad $\Tw \Graor_{m+1}$ acts on the hairy graph complex $\fHGC_{m,n}$, and that the action factors through the quotient operad 
\[
\Grap^{or}_{m+1} \leftarrow \Tw \Gra_{m+1}^{or}
\]
defined by sending graphs to zero that contain internal vertices with more than one outgoing edge.
We consider the operad $\Grap^{or}_{m+1}$ as the operad of natural operations (given by reconnecting hairs) on $\HGC_{m,n}$. In particular, there is an action of the homology $H(\Grap^{or}_{m+1})$ on the hairy graph homology $\HGC_{m,n}$. We cannot compute $H(\Grap^{or}_{m+1})$ explicitly, but we note that the homology operad is quite large and contains many interesting operations on the hairy graph homology. We merely give a list of such operations here.

\begin{itemize}
\item The simplest operations are those of the standard $\Poiss_{m+1}$-structure, given by the maps
\[
\Pois_{m+1} \to \Tw \Gra_{m+1}^{or} \to \Grap^{or}_{m+1}
\]
defined on generators by
\begin{align*}
- \wedge - &\mapsto 
\begin{tikzpicture}[baseline=-.65ex]
\node[ext] (v) at (0,0) {1};
\node[ext] (w) at (0.7,0) {2};
\end{tikzpicture}
&
[-,-] &\mapsto 
\begin{tikzpicture}[baseline=-.65ex]
\node[ext] (v) at (0,0) {1};
\node[ext] (w) at (1,0) {2};
\draw[-latex] (v) edge (w) ;
\end{tikzpicture}
+
\begin{tikzpicture}[baseline=-.65ex]
\node[ext] (v) at (0,0) {1};
\node[ext] (w) at (1,0) {2};
\draw[-latex] (w) edge (v) ;
\end{tikzpicture}\, .
\end{align*}
\item The total symmetric space of $H(\Grap^{or}_{m+1})$ contains the graph cohomology $H(\fGC^{or}_{m+1})\cong H(\fGC^2_{m})$. To see this note that one has natural maps of complexes
\[
\fGC^{or}_{m+1} \to \Def(\hoLie_{m+1}\to \Grap^{or}_{m+1}) \to \fGC^{or}_{m+1}.
\]
The first map sends a graph to the sum of graphs obtained by coloring arbitrary subsets of vertices black, and the second all graphs with black vertices to 0. The composition of both maps is the identity, hence in particular the first map is an injection on homology. To give one example, from the graph homology class \eqref{equ:GCorclass} one can find the non-trivial homology class in $H(\Grap^{or}_{m+1})$ represented by
\[
\begin{tikzpicture}[baseline=-.65ex]
\node[ext] (v) at (0,0) {1};
\node[ext] (w) at (1,0) {2};
\node[int] (x) at (.5,.6) {};
\node[int] (y) at (.5,1.1) {};
\draw[-latex] (v) edge (x) edge (y) (w) edge (x) edge (y) (x) edge (y) ;
\end{tikzpicture}
+ 2\
\begin{tikzpicture}[baseline=-.65ex]
\node[ext] (v) at (0,0) {1};
\node[ext] (w) at (1,0) {2};
\node[int] (x) at (.5,.6) {};
\node[int] (y) at (.5,1.1) {};
\draw[-latex] (v) edge (x) edge (w) (w) edge (x) edge (y) (x) edge (y) ;
\end{tikzpicture}
+
\begin{tikzpicture}[baseline=-.65ex]
\node[ext] (v) at (0,0) {1};
\node[ext] (w) at (1,0) {2};
\node[int] (x) at (0,1) {};
\node[int] (y) at (1,1) {};
\draw[-latex] (v) edge (x) edge (y) edge (w) (w) edge (x) edge (y) ;
\end{tikzpicture}
+
(1\leftrightarrow 2)\, .
\]
\item There are more operations in $H(\Grap^{or}_{m+1})$. In particular, for $m$ even we find the following homology class in $H(\Grap^{or}_{m+1}(1))$
\[
\begin{tikzpicture}[baseline=-.65ex]
\node[ext] (v) at (0,0) {1};
\node[int] (x) at (0,1) {};
\draw[-latex] (v) edge[bend left] (x) edge[bend right] (x) ;
\end{tikzpicture}
\]
For $m=0$ this class is of degree 1 and represents an infinitesimal deformation of the differential. The corresponding full (i.e., non-infinitesimal) deformation is given by the Maurer-Cartan element 
\[
\sum_{k=2}^\infty \frac{1}{k!}  
\underbrace{
\begin{tikzpicture}[baseline=2ex]
\node[ext] (v) at (0,0) {1};
\node[int] (x) at (0,1) {};
\node at (0,.5){$\scriptstyle \cdots$};
\draw[-latex] (v) edge[bend left] (x) edge[bend left=60] (x) edge[bend right=60] (x) edge[bend right] (x);
\end{tikzpicture}
}_{k \text{ edges}}
\]
in the dg algebra $\Grap^{or}_{m+1}(1)$. This deformation of the differential has been used in \cite{KWZ2} to obtain a significant amount of information about the hairy graph homology for $m$ even. 
\end{itemize}

\section{Examples of the Lie bracket on homology}\label{sec:lie bracket}
An(y) $L_\infty$ structure on the hairy graph complex $\HGC_{1,n}$ induces a Lie algebra structure on the hairy graph homology $H(\HGC_{1,n})$. The goal of this section is to compute this Lie bracket for some homology classes, as an application of the theory developed above. 

In general, the Lie bracket may receive contributions from graphs in the Shoikhet MC element $m_{trans}$ of arbitrarily high loop order. For our example calculations, we will however only consider (and mostly only need) the piece of loop order $\leq 2$:
\[
m_{trans} = 
\begin{tikzpicture}[baseline={(current bounding box.center)}, scale=.5, every edge/.style={-latex,draw}]
\node[int] (v1) at (-1,1.5) {};
\node[int] (v2) at (-1,-0.5) {};
\node[int] (v4) at (-2,0.5) {};
\node[int] (v3) at (0,0.5) {};
\draw  (v1) edge (v2);
\draw  (v3) edge (v2);
\draw  (v3) edge (v1);
\draw  (v4) edge (v1);
\draw  (v4) edge (v2);
\end{tikzpicture}
+
2\
\begin{tikzpicture}[baseline={(current bounding box.center)},yshift=.5, scale=.5, every edge/.style={-latex,draw}]
\node[int] (v1) at (-1,1.5) {};
\node[int] (v2) at (-1,-0.5) {};
\node[int] (v4) at (-2,0.5) {};
\node[int] (v3) at (0,0.5) {};
\draw  (v1) edge (v2);
\draw  (v2) edge (v3);
\draw  (v1) edge (v3);
\draw  (v4) edge (v1);
\draw  (v4) edge (v2);
\end{tikzpicture}
+
\begin{tikzpicture}[baseline={(current bounding box.center)}, scale=.5, every edge/.style={-latex,draw}]
\node[int] (v1) at (-1,1.5) {};
\node[int] (v2) at (-1,-0.5) {};
\node[int] (v4) at (-2,0.5) {};
\node[int] (v3) at (0,0.5) {};
\draw  (v1) edge (v2);
\draw  (v2) edge (v3);
\draw  (v1) edge (v3);
\draw  (v1) edge (v4);
\draw  (v2) edge (v4);
\end{tikzpicture}
+\text{higher loop order}.
\]
We obtain the Lie bracket of two graphs $\gamma_1,\gamma_2$ by twisting. Concretely, this means that the inputs corresponding to two vertices are $\gamma_1,\gamma_2$, while to all other vertices, we have to assign the MC element 
$
\begin{tikzpicture}
 \node[int] (v) at (0,0) {};
 \draw (v) edge +(.5,0);
\end{tikzpicture}
$, discarding the graph if that other vertex has out-valence $>1$. Depicting the MC element with black vertices, and $\gamma_1,\gamma_2$ by white vertices, we hence obtain the following three contributing terms:
\begin{equation}\label{equ:Liebracket}
\begin{tikzpicture}[baseline={(current bounding box.center)}, scale=.5, every edge/.style={-latex,draw}]
\node[int] (v1) at (-1,1.5) {};
\node[int] (v2) at (-1,-0.5) {};
\node[ext] (v4) at (-2,0.5) {};
\node[ext] (v3) at (0,0.5) {};
\draw  (v1) edge (v2);
\draw  (v3) edge (v2);
\draw  (v3) edge (v1);
\draw  (v4) edge (v1);
\draw  (v4) edge (v2);
\end{tikzpicture}
+
2\
\begin{tikzpicture}[baseline={(current bounding box.center)},yshift=.5, scale=.5, every edge/.style={-latex,draw}]
\node[ext] (v1) at (-1,1.5) {};
\node[int] (v2) at (-1,-0.5) {};
\node[ext] (v4) at (-2,0.5) {};
\node[int] (v3) at (0,0.5) {};
\draw  (v1) edge (v2);
\draw  (v2) edge (v3);
\draw  (v1) edge (v3);
\draw  (v4) edge (v1);
\draw  (v4) edge (v2);
\end{tikzpicture}
+
\begin{tikzpicture}[baseline={(current bounding box.center)}, scale=.5, every edge/.style={-latex,draw}]
\node[ext] (v1) at (-1,1.5) {};
\node[ext] (v2) at (-1,-0.5) {};
\node[int] (v4) at (-2,0.5) {};
\node[int] (v3) at (0,0.5) {};
\draw  (v1) edge (v2);
\draw  (v2) edge (v3);
\draw  (v1) edge (v3);
\draw  (v1) edge (v4);
\draw  (v2) edge (v4);
\end{tikzpicture}
 +\text{(higher loop orders)}
\end{equation}

In the following we will restrict to the case of $\HGC_{1,n}$ for odd $n$, because the lowest non-trivial cohomology classes are simpler (i.e., have fewer vertices) than in the case of even $n$, and thus our computations are simpler.

\subsection{Loop order 0}
The only non-trivial homology classes in $\HGC_{1,n}$ for $n$ odd of loop order 0 are $\K$-multiples of the line graph
\[
L=
\begin{tikzpicture}[baseline=-.65ex]
\draw (0,0) -- (1,0);
\end{tikzpicture}.
\]
Only the first term in \eqref{equ:Liebracket} can contribute non-trivially. The second cannot contribute because the line graph has no vertices, so if a white vertex in \eqref{equ:Liebracket} is hit by an arrow, the term vanishes. The third term in \eqref{equ:Liebracket} can similarly not contribute because the line graph has only two hairs, but there is a vertex with three outgoing edges. Finally the terms of higher loop order necessarily also contain such vertices by degree reasons and hence yield no contribution.
Overall we find that
\[
[L,L ] =
 \begin{tikzpicture}[baseline=-.65ex]
   \node[int] (v) at (0,0) {};
   \node[int] (w) at (0.7,0) {};
   \draw (v) edge (w) edge[bend left=40] (w) edge[bend right=40] (w)
         (w) edge +(.5,0);
 \end{tikzpicture}
\]
This is a non-trivial class. 
On the other hand, the standard Lie bracket of $L$ with itself vanishes.
Even more strikingly, $[L,L]$ cannot be produced by the standard Lie bracket.
Hence we can state the following corollary.
\begin{lemma}
The Shoikhet Lie algebra structure on $H(\HGC_{1,n})$ is nontrivial and differs from the standard Lie algebra structure.
Furthermore there is no isomorphism between $H(\HGC_{1,n})$ with the standard and the Shoikhet Lie algebra structures such that for $x$ of loop order $g$
\[
x \mapsto x + \text{(terms of loop order $>g$)}.
\]
\end{lemma}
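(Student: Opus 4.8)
The plan is to play the single explicit computation $[L,L]_{Sh}=\Theta$, where $\Theta$ denotes the theta-graph-with-one-hair of loop order $2$, against the vanishing $[L,L]_{std}=0$, organizing everything by the loop order grading. (I write $[\cdot,\cdot]_{std}$ and $[\cdot,\cdot]_{Sh}$ for the standard and Shoikhet brackets on $H(\HGC_{1,n})$.) The first assertion is then immediate: the two brackets disagree already on the pair $(L,L)$, so the Shoikhet structure is not the standard one, and since $\Theta\neq 0$ it is in particular non-abelian.

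For the second assertion I would first record how the two brackets interact with loop order. The standard bracket is additive, sending classes of loop orders $g_1,g_2$ to loop order $g_1+g_2$, since gluing a hair of one graph onto a vertex of another preserves the total first Betti number. For the Shoikhet bracket, computed by the twisting procedure of \eqref{equ:Liebracket}, one inserts the two arguments together with copies of $\mu$ into the graphs of $m_{trans}$; inserting into a graph of $\GCor_2$ of loop order $\ell$ produces output of loop order $\ell+g_1+g_2$, and since the standard (single-edge) term has $\ell=0$ while $m_{trans}$ is supported in loop orders $\ell\geq 2$, one obtains
\[
[\gamma_1,\gamma_2]_{Sh}=[\gamma_1,\gamma_2]_{std}+(\text{terms of loop order}\geq g_1+g_2+2).
\]
In particular the leading loop-order part of the Shoikhet bracket is exactly the standard bracket, and there is no loop-order-$(g_1+g_2+1)$ correction.

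Now suppose, for contradiction, that $\phi$ is a Lie isomorphism from the standard to the Shoikhet structure with $\phi(x)=x+(\text{loop order}>g)$ for $x$ of loop order $g$ (the opposite direction is handled symmetrically). Writing $\phi(L)=L+R$ with $R$ of loop order $\geq 1$ and decomposing $R=R_1+R_2+\cdots$ by loop order, the relation $\phi([L,L]_{std})=[\phi L,\phi L]_{Sh}$ together with $[L,L]_{std}=0$ gives $[\phi L,\phi L]_{Sh}=0$. Extracting the loop-order-$2$ component and using the leading-term formula above, the only surviving contributions are $\Theta$ (from $[L,L]_{Sh}$) together with terms of the form $[L,R_2]_{std}$ and $[R_1,R_1]_{std}$, all genuine Shoikhet corrections being of loop order $\geq 3$. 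Hence $\Theta$ equals a combination of standard brackets, i.e.
\[
\Theta\in \mathrm{im}\,[\cdot,\cdot]_{std}.
\]

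This is precisely the situation excluded by the computation preceding the Lemma, namely that $\Theta$ cannot be produced by the standard Lie bracket, so the displayed membership is the desired contradiction; the reverse isomorphism gives the identical conclusion after tracking the loop-order-$2$ part $\Theta$ of $\phi([L,L]_{Sh})$. The main obstacle is therefore concentrated in the non-membership $\Theta\notin \mathrm{im}\,[\cdot,\cdot]_{std}$. Although it is already asserted in the excerpt, I would make it precise by hair- and valence-counting on loop-order-$2$ representatives: a standard bracket landing in loop order $2$ arises only from pairs of loop orders $(0,2)$ or $(1,1)$; any bracket involving the loop-$0$ class $L$ strictly raises the number of hairs and so lands among classes carrying at least two hairs, whereas $\Theta$ has a single hair, and the residual $(1,1)$-brackets must be checked on homology not to represent the class of $\Theta$. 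This last homological check, where graph-complex relations could a priori produce $\Theta$, is the delicate point; once it is settled the contradiction closes the proof.
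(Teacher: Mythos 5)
Your argument is essentially the paper's own: there the Lemma is drawn as a corollary of the computation $[L,L]_{Shoi}=\Theta\neq 0=[L,L]_{std}$ together with the asserted (but not further justified) fact that the one-hair theta class cannot be produced by the standard bracket, and the loop-order bookkeeping you spell out is exactly the filtration argument left implicit in the text. The residual $(1,1)$ homological check you honestly flag as the delicate point is precisely the content of that unproven assertion, so your proposal matches the paper's proof in both method and level of completeness.
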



\subsection{Loop orders 0 and 1}
The simplest class of loop order one in $H(HGC_{1,n})$ for $n$ odd is represented by the 2-hedgehog graph
\[
H_2=
\begin{tikzpicture}[baseline=-.65ex]
\node[int] (v) at (0,0) {};
\node[int] (w) at (1,0) {};
\draw (v) edge[bend left] (w) edge[bend right] (w)
      edge +(-.5,0) (w) edge +(.5,0);
\end{tikzpicture}
\]
The standard Lie bracket $[H,L]_{std}=0$ is trivial. The Shoikhet Lie bracket receives corrections (only) from the first two terms of \eqref{equ:Liebracket}. These terms read
\[
[H_2,L]_{Shoi} = 
\begin{tikzpicture}[baseline=-.65ex]
 \node[int] (v1) at (-.4,-.4) {};
 \node[int] (v2) at (-.4,.4) {};
 \node[int] (v3) at (.4,-.4) {};
 \node[int] (v4) at (.4,.4) {};
 \draw (v1) edge[bend left] (v2) edge[bend right](v2) edge (v3)
       (v4) edge[bend left] (v3) edge[bend right](v3) edge (v2)
       (v3) edge +(0,-.5);
\end{tikzpicture}
+2\
\underbrace{
\begin{tikzpicture}[baseline=-.65ex]
 \node[int] (v1) at (-.4,-.4) {};
 \node[int] (v2) at (-.4,.4) {};
 \node[int] (v3) at (.4,-.4) {};
 \node[int] (v4) at (.4,.4) {};
 \draw (v1) edge[bend left] (v2) edge[bend right](v2) edge (v3)
       (v4) edge (v3) edge (v1) edge (v2)
       (v3) edge +(0,-.5);
\end{tikzpicture}
}_{\text{exact term}}
= \frac{1}{3}
\begin{tikzpicture}[baseline=-.65ex]
 \node[int] (v1) at (-.4,-.4) {};
 \node[int] (v2) at (-.4,.4) {};
 \node[int] (v3) at (.4,-.4) {};
 \node[int] (v4) at (.4,.4) {};
 \draw (v1) edge (v2) edge (v4) edge (v3)
       (v4) edge (v3) edge (v2)
       (v3) edge +(0,-.5) edge (v2);
\end{tikzpicture}
+ \text{(exact terms)}.
\]
We note that the right-hand side is shown to represent a non-trivial homology class in \cite{TW2}.

\subsection{Loop orders 1 and 1}
Similarly, one can compute brackets of graphs of loop order 1.
For example, we leave it to the reader to check that 
\begin{align*}
[H_2,H_2]_{std} &= 
\begin{tikzpicture}[baseline=-.65ex]
 \node[int] (v) at (0,0) {};
 \node[int] (w) at (.7,0) {};
 \node[int] (x) at (1,0) {};
 \node[int] (y) at (1.7,0) {};
 \draw (v) edge +(-.5,0) edge[bend left](w) edge[bend right](w)
       (w) edge (x)
       (x) edge[bend left] (y) edge[bend right](y) edge +(0,-.5)
       (y) edge +(0.5,0);
\end{tikzpicture}
=
2
\begin{tikzpicture}[baseline=-.65ex]
 \node[int] (v) at (0,0) {};
 \node[int] (w) at (.5,.5) {};
 \node[int] (x) at (1,0) {};
 \node[int] (y) at (.5,-.5) {};
 \draw (v) edge +(-.5,0) edge (w) edge (y)
       (w) edge (x) edge (y)
       (x) edge (y) edge (w) edge +(0.5,0)
       (y) edge +(0,-.5);
\end{tikzpicture}
+(\text{exact terms})
 \neq 0 \text{ (in homology)}
\end{align*}
The non-triviality of the right-hand side of the first line has been shown in \cite{CCT}, where the corresponding class is denoted by $XXX$.
Note that this is also the first case when the standard bracket of two graphs is not trivial.
In particular, the standard Lie algebra structure is not trivial. 

The Shoikhet bracket in this case again retrieves additional contributions (only) from the first two terms in \eqref{equ:Liebracket}. We find that
$$
[H_2,H_2]_{Shoi} = [H_2,H_2]_{std}
+ 
\begin{tikzpicture}[baseline=-.65ex]
 \node[int] (v) at (0,-.4) {};
 \node[int] (w) at (0,.4) {};
 \node[int] (x) at (.7,-.4) {};
 \node[int] (y) at (.7,.4) {};
 \node[int] (z) at (1.4,-.4) {};
 \node[int] (a) at (1.4,.4) {};
 \draw (v) edge[bend left] (w) edge[bend right] (w)        
       (x) edge (y) edge (v) edge (z) edge +(0,-0.5)
       (y) edge (w) edge (a)
       (z) edge[bend left] (a) edge[bend right](a) ;
\end{tikzpicture}
+ 2\
\begin{tikzpicture}[baseline=-.65ex]
 \node[int] (v) at (0,-.4) {};
 \node[int] (w) at (0,.4) {};
 \node[int] (x) at (.7,-.4) {};
 \node[int] (y) at (.7,.4) {};
 \node[int] (z) at (1.4,-.4) {};
 \node[int] (a) at (1.4,.4) {};
 \draw (v) edge[bend left] (w) edge[bend right] (w)        
       (x) edge (y) edge (v) edge (z) 
       (y) edge (w) edge[bend left] (a) edge[bend right] (a)
       (z) edge (a)  edge +(0,-0.5);
\end{tikzpicture}
+ 2\
\begin{tikzpicture}[baseline=-.65ex]
 \node[int] (v) at (0,-.4) {};
 \node[int] (w) at (0,.4) {};
 \node[int] (x) at (.7,-.4) {};
 \node[int] (y) at (.7,.4) {};
 \node[int] (z) at (1.4,-.4) {};
 \node[int] (a) at (1.4,.4) {};
 \draw (v) edge[bend left] (w) edge[bend right] (w)        
       (x) edge (y) edge (v) edge (z) 
       (y) edge[bend left] (a) edge[bend right] (a)
       (z) edge (a)  edge +(0,-0.5)
       (w) edge[bend left=60] (a);
\end{tikzpicture}
\ .
$$

Unfortunately, more complicated examples quickly become combinatorially very complicated.
For computing the Shoikhet bracket of graphs with more hairs one needs to know higher orders of the Shoikhet MC element $m_{trans}$, and no closed formula is known. For example, for computing  the Shoikhet bracket, of, say, the 4-hedgehog
  \[
  H_4=
  \begin{tikzpicture}[baseline=-.65ex]
  \node[int](v) at (0,0.5) {};
  \node[int](w) at (0,-0.5) {};
  \node[int](x) at (1,-0.5) {};
  \node[int](y) at (1,0.5) {};
  \draw (v) edge (w) edge (y) edge +(-0.5,0.5)
        (w) edge (x) edge +(-0.5,-0.5)
        (x) edge (y) edge +(.5, -.5)
        (y) edge +(.5,.5);
 \end{tikzpicture} 
 \]
 with $H_2$ one a priori needs to know the $4$-loop corrections to \eqref{equ:Liebracket}, and the number of graphs involved in the computations quickly grows.
  
\section{The cup product on Hochschild homology}\label{sec:cup product}
Above, we have mostly focused on studying the combinatorial form of the Lie Gerstenhaber bracket on $HH(\Poiss_n)$, when expressed through hairy graphs. We have not said much about the (cup) product so far. One reason is that the cup product is comparatively simple (but not trivial), and can be derived directly from knowledge of the HKR morphism of Theorem \ref{thm:PoisHKR}. 

First note that, clearly, the cup product will be given by a certain degree zero operation in
\[
m_2 \in \dGraphsor_2(2).
\]
Degree zero is the top degree in that space, elements of degree zero can be identified with bivalent forests of internal vertices, whose leaves are affixed to one of the external vertices, as depicted in the following example.
\[
\begin{tikzpicture}[baseline=-.65ex]
\node[ext](v1) at (0,0) {1};
\node[ext](v2) at (2,0) {2};
\node[int] (w1) at (0,.7) {};
\node[int] (w2) at (0.5,1.4) {};
\node[int] (w3) at (1,.7) {};
\node[int] (w5) at (2,.7) {};
\draw[-latex] (v1) edge (w1) edge (w3) edge (w5) 
         (v2) edge (w5) edge (w1) edge (w3)
         (w1) edge (w2) 
         (w3) edge (w2);
\end{tikzpicture}
\]
Furthermore, for the purpose of computing the cup product on the hairy graph homology, we may disregard exact such forest, i.e., we may understand $m_2$ as an element of the quotient
\[
m_2 \in \dGraphsor_2(2) / im(\delta).
\]
However, then the bivalent trees may be interpreted as Lie trees, i.e., one considers them modulo the Jacobi relation.
Furthermore, $m_2$ must have the form 
\[
m_2 = 
\begin{tikzpicture}[baseline=-.65ex]
\node[ext](v1) at (0,0) {1};
\node[ext](v2) at (1,0) {2};
\end{tikzpicture}
+
\begin{tikzpicture}[baseline=-.65ex]
\node[ext](v1) at (0,0) {1};
\node[ext](v2) at (1,0) {2};
\node[int](w) at (.5,0.7) {};
\draw[-latex] (v1) edge (w) (v2) edge (w);
\end{tikzpicture}
+ (\text{terms with more vertices})
\]
and on top of this $m_2$ must be associative:
\[
m_2(m_2(-,-),-) = m_2(-,m_2(-,-)).
\]
However, it is well known that these conditions already uniquely determine $m_2$.
Concretely, $m_2$ may be identified with the Poincar\'e-Birkhoff-Witt (star) product. 
Let us briefly describe this product, see \cite{Kathotia} for a more detailed exposition. Suppose $\alg g$ is any Lie algebra. Then by the Poincar\'e-Birkhoff-Witt Theorem we have an isomorphism 
\[
\Phi : S\alg g\to \mU \alg g
\]
by symmetrization. Pulling back the (in general) non-commutative product of $\mU \alg g$ along $\Phi$ yields a (generally) non-commutative product $\star$ on $S\alg g$, the Poincar\'e-Birkhoff-Witt product.
This product may be given formally as a bi-differential operator
\[
f\star g = \sum_T c_T D_T(f,g)
\]
where the sum is over Lie forests with leaves decorated by 1 or 2, $D_T$ is a natural bidifferential operator associated to the forest, and $c_T$ are combinatorial constants, for which one does not have a nice closed form expression.
Our desired element $m_2 \in \dGraphsor_2(2) / im(\delta)$ then has the form
\[
\sum_T c_T T
\]
(up to exact elements).

\subsection{Example: Cup product with one-hair graphs}
There is one piece of the PBW product for which there is a nice closed formula. The weights of the part of degree one in one argument, i.e., the number $c_{T_n}$ for trees of the form 
\[
T_n = 
\begin{tikzpicture}[baseline=-.65ex]
\node[ext](v1) at (0,0) {1};
\node[ext](v2) at (2,0) {2};
\node[int] (w1) at (0,.7) {};
\node[int] (w2) at (0.5,.7) {};
\node (w3) at (1,.7) {$\scriptstyle \cdots$};
\node[int] (w4) at (1.5,.7) {};
\node[int] (w5) at (2,.7) {};
\draw[-latex] (v1) edge (w1) edge (w2) edge (w4) edge (w5) 
         (v2) edge (w5)
         (w2) edge (w1) 
         (w3) edge (w2)
         (w4) edge (w3)
         (w5) edge (w4);
\draw [decorate,decoration={brace,amplitude=10pt,raise=4pt},yshift=0pt]
(0,.9) -- (2,.9) node [black,midway,yshift=0.8cm] {\footnotesize $n\times$ };
\end{tikzpicture}
\]
is known to be \cite[eqn. (2.21)]{Kathotia}
\[
c_{T_n} = \frac{B_n}{n!}
\]
where $B_n$ is the $n$-th Bernoulli number.
We can use this to compute the cup product of any hairy graph $\Gamma$ with a hairy graph $\Gamma_1$ with only one hair.
We obtain that this cup product is 
\[
\Gamma\cup \Gamma_1 
=
\begin{tikzpicture}[baseline=-.65ex]
\node (v) at (0,0) {$\Gamma$};
\node at (0,-.3){$\scriptstyle \cdots$};
\draw (v) edge +(-.5,-.5) edge +(-.4,-.5) edge +(.4,-.5) edge +(.5,-.5);
\end{tikzpicture}
\begin{tikzpicture}[baseline=-.65ex]
\node (v) at (0,0) {$\Gamma_1$};
\draw (v) edge +(0,-.5) ;
\end{tikzpicture}
+
\sum_{n=1}^\infty
\frac{B_n}{n!}
\begin{tikzpicture}[baseline=-.65ex]
\node[int] (v1) at (0,0) {};
\node[int] (v2) at (0.5,0) {};
\node (v3) at (1,0) {$\scriptstyle \cdots$};
\node[int] (v4) at (1.5,0) {};
\node (v5) at (2.5,0) {$\Gamma_1$};
\node (w) at (1,1) {$\Gamma$};
\draw (v1) edge (v2) edge +(-.5,-.5)
          (v3) edge (v2) edge (v4) 
          (v5) edge (v4)
          (w) edge (v1) edge (v2) edge (v4) edge +(-.5,0) edge +(-.5,.25) edge +(-.5,-.25);
\draw [decorate,decoration={brace,amplitude=10pt,mirror,raise=4pt},yshift=0pt]
(0,-.3) -- (1.5,-.3) node [black,midway,yshift=-0.8cm] {\footnotesize $n\times$ };
\end{tikzpicture}\, .
\]

\begin{bibdiv}
\begin{biblist}

\bib{CCT}{article}{
author={J. Conant, J. Costello, V. Turchin, P. Weed},
title={Two-loop part of the rational homotopy of spaces of long embeddings},
journal={J. Knot Theory Ramifications},
volume={23},
date={2014},
number={4},
pages={1450018},
}

%
%

\bib{Turchin3}{article}{
author={G.~{Arone} and V.~{Tourtchine}},
title= {Graph-complexes computing the rational homotopy of high dimensional
  analogues of spaces of long knots},
year={ 2011},
note={arXiv:1108.1001},
}
%
%
%

\bib{CalWill}{article}{
author={Damien Calaque and Thomas Willwacher},
title={Triviality of the higher formality theorem},
note={http://dx.doi.org/10.1090/proc/12670 },
year={2015},
journal={Proc. AMS},
}

\bib{ricardoBr}{article}{
author={Ricardo Campos},
title={BV formality},
year={2015},
}

%
\bib{Dol}{article}{
  author={Dolgushev, Vasily},
  title={Stable Formality Quasi-isomorphisms for Hochschild Cochains I},
  eprint={arXiv:1109.6031},
  date={2011}
}

\bib{DTT}{article}{
    AUTHOR = {Dolgushev, Vasiliy and Tamarkin, Dmitry and Tsygan, Boris},
     TITLE = {Formality theorems for {H}ochschild complexes and their
              applications},
   JOURNAL = {Lett. Math. Phys.},
    VOLUME = {90},
      YEAR = {2009},
    NUMBER = {1-3},
     PAGES = {103--136},
      ISSN = {0377-9017},
}

\bib{Dolrational}{article}{
title={ All coefficients entering Kontsevich's formality quasi-isomorphism can be replaced by rational numbers},
author={Vasily Dolgushev},
note={arXiv:1306.6733},
year={2013},
}

\bib{DolWill}{article}{
author={Vasily Dolgushev and Thomas Willwacher},
title={Operadic Twisting -- with an application to Deligne's conjecture},
journal={J. of Pure and Appl. Alg.},
note={to appear},
year={2014},
}

%
%
%
%
\bib{FTW}{article}{
author={B. Fresse and V. Turchin and T. Willwacher},
title={Mapping spaces of $E_n$ operads},
note={in preparation},
year={2015},
}

\bib{GJ}{article}{
title={Operads, homotopy algebra and iterated integrals for double loop spaces},
author={Ezra Getzler and J. D. S. Jones},
note={arXiv:hep-th/9403055},
year={1994},
}

\bib{Kathotia}{article}{
    AUTHOR = {Kathotia, Vinay},
     TITLE = {Kontsevich's universal formula for deformation quantization
              and the {C}ampbell-{B}aker-{H}ausdorff formula},
   JOURNAL = {Internat. J. Math.},
    VOLUME = {11},
      YEAR = {2000},
    NUMBER = {4},
     PAGES = {523--551},
}

\bib{KWZ}{article}{
author={A. Khoroshkin, T. Willwacher and M. \v Zivkovi\'c},
title={Differentials on graph complexes},
note={arXiv:1411.2369},
year={2014},
}

\bib{KWZ2}{article}{
author={A. Khoroshkin, T. Willwacher and M. \v Zivkovi\'c},
title={Differentials on graph complexes II: Hairy Graphs},
note={in preparation},
year={2015},
}

\bib{Kformal}{article}{
    AUTHOR = {Kontsevich, Maxim},
     TITLE = {Formal (non)commutative symplectic geometry},
 BOOKTITLE = {The {G}el$'$fand {M}athematical {S}eminars, 1990--1992},
     PAGES = {173--187},
 PUBLISHER = {Birkh\"auser Boston, Boston, MA},
      YEAR = {1993},
}
\bib{K3}{article}{
   author={Kontsevich, Maxim},
   title={Feynman diagrams and low-dimensional topology},
   conference={
      title={First European Congress of Mathematics, Vol.\ II},
      address={Paris},
      date={1992},
   },
   book={
      series={Progr. Math.},
      volume={120},
      publisher={Birkh\"auser},
      place={Basel},
   },
   date={1994},
   pages={97--121},
   review={\MR{1341841 (96h:57027)}},
}

\bib{K2}{article}{
   author={Kontsevich, Maxim},
   title={Operads and motives in deformation quantization},
   note={Mosh\'e Flato (1937--1998)},
   journal={Lett. Math. Phys.},
   volume={48},
   date={1999},
   number={1},
   pages={35--72},
   issn={0377-9017},
   review={\MR{1718044 (2000j:53119)}},
   doi={10.1023/A:1007555725247},
}

\bib{K1}{article}{
  author={Kontsevich, Maxim},
  title={Deformation quantization of Poisson manifolds},
  journal={Lett. Math. Phys.},
  volume={66},
  date={2003},
  number={3},
  pages={157--216},
  issn={0377-9017},
  review={\MR{2062626 (2005i:53122)}},
}

\bib{Kconjecture}{article}{
    AUTHOR = {Kontsevich, Maxim},
     TITLE = {Formality conjecture},
 BOOKTITLE = {Deformation theory and symplectic geometry ({A}scona, 1996)},
    SERIES = {Math. Phys. Stud.},
    VOLUME = {20},
     PAGES = {139--156},
 PUBLISHER = {Kluwer Acad. Publ., Dordrecht},
      YEAR = {1997},
}

%

\bib{LVformal}{article}{
    AUTHOR = {Lambrechts, Pascal and Voli{\'c}, Ismar},
     TITLE = {Formality of the little {$N$}-disks operad},
   JOURNAL = {Mem. Amer. Math. Soc.},
    VOLUME = {230},
      YEAR = {2014},
    NUMBER = {1079},
     PAGES = {viii+116},
}

 \bib{LVbook}{book}{
author={J.-L. Loday},
author={B. Vallette},
title={Algebraic Operads},
number={346},
series= {Grundlehren der mathematischen Wissenschaften},
 publisher= {Springer},
 address={Berlin}, 
 year={2012},
}

\bib{LambrechtsTurchin}{article} {
    AUTHOR = {Pascal Lambrechts and Victor Turchin},
     TITLE = {Homotopy graph-complex for configuration and knot spaces},
   JOURNAL = {Trans. Amer. Math. Soc.},
    VOLUME = {361},
      YEAR = {2009},
    NUMBER = {1},
     PAGES = {207--222},
      ISSN = {0002-9947},
 }
%

\bib{Moriya}{article}{
    AUTHOR = {Moriya, Syunji},
     TITLE = {Multiplicative formality of operads and {S}inha's spectral
              sequence for long knots},
   JOURNAL = {Kyoto J. Math.},
    VOLUME = {55},
      YEAR = {2015},
    NUMBER = {1},
     PAGES = {17--27},
}

\bib{Turchin1}{article}{
    AUTHOR = {Turchin, Victor},
     TITLE = {Hodge-type decomposition in the homology of long knots},
   JOURNAL = {J. Topol.},
    VOLUME = {3},
      YEAR = {2010},
    NUMBER = {3},
     PAGES = {487--534},
      ISSN = {1753-8416},
}

\bib{Turchin2}{article}{
   author = {G. {Arone} and V. {Tourtchine}},
    title = {On the rational homology of high dimensional analogues of spaces of long knots},
   note = {arXiv:1105.1576},
     year = {2011},
}

\bib{Tamanother}{article}{
author={D. Tamarkin},
title={Another proof of M. Kontsevich formality theorem},
note={arXiv:math/9803025},
year={2015},
}

\bib{TW}{article}{
author= {V. Turchin and T. Willwacher},
title={Relative (non-)formality of the little cubes operads and the algebraic Schoenflies theorem},
year={2014},
note={in preparation},
}

\bib{TW2}{article}{
author= {V. Turchin and T. Willwacher},
title={Hairy graph cohomology and small representations of $\mathit{Out}(F_n)$},
year={2014},
note={in preparation},
}

\bib{Shoikhet}{article}{
author={B. Shoikhet},
title={An $L_\infty$ algebra structure on polyvector fields},
eprint={arXiv:0805.3363},
date={2008},
}

\bib{ST}{article}{
author={Paul Arnaud Songhafouo Tsopm\'en\'e},
title={Formality of Sinha's cosimplicial model for long knots spaces and Poisson algebras },
note={ arXiv:1210.2561},
year={2012},
}

\bib{vergne_lie}{article}{
    AUTHOR = {Vergne, Mich{\`e}le},
     TITLE = {A homotopy for a complex of free {L}ie algebras},
   JOURNAL = {Turkish J. Math.},
    VOLUME = {36},
      YEAR = {2012},
    NUMBER = {1},
     PAGES = {59--65},
}
%

\bib{Will}{article}{
    AUTHOR = {Willwacher, Thomas},
     TITLE = {M. {K}ontsevich's graph complex and the
              {G}rothendieck--{T}eichm\"uller {L}ie algebra},
   JOURNAL = {Invent. Math.},
    VOLUME = {200},
      YEAR = {2015},
    NUMBER = {3},
     PAGES = {671--760},
}

\bib{WillInfty}{article}{
author={T. Willwacher},
title={A Note on Br-infinity and KS-infinity formality},
year={2015},
note={arXiv:1109.3520},
}

\bib{WillOriented}{article}{
    AUTHOR = {Willwacher, Thomas},
     TITLE = {The oriented graph complexes},
   JOURNAL = {Comm. Math. Phys.},
    VOLUME = {334},
      YEAR = {2015},
    NUMBER = {3},
     PAGES = {1649--1666},
}
\bib{WillStable}{article}{
    AUTHOR = {Willwacher, Thomas},
     TITLE = {Stable cohomology of polyvector fields},
   JOURNAL = {Math. Res. Letters.},
    VOLUME = {21},
      YEAR = {2014},
    NUMBER = {6},
     PAGES = {1501--1530},
}
%

\end{biblist}
\end{bibdiv}

\end{document}